\title{On the zeros of odd weight Eisenstein series}
\author{Jan-Willem van Ittersum%
\thanks{\emph{Email}: \href{mailto:j.w.ittersum@uni-koeln.de}{j.w.ittersum@uni-koeln.de}, \newline
Max-Planck-Institut f\"ur Mathematik, Vivatsgasse 7, 53111 Bonn, Germany. 
\emph{Current address:}
University of Cologne, Department of Mathematics and Computer Science, Weyertal 86-90, 50931
Cologne, Germany
}
, Berend Ringeling%
\thanks{\emph{Email}: \href{mailto:b.ringeling@math.ru.nl}{b.ringeling@math.ru.nl}, \newline
Department of Mathematics, IMAPP, Radboud University, PO Box 9010, 6500~GL Nijmegen, The Netherlands
}
}
\newcommand{\sltwoz}{\mathrm{SL}_2(\z)}
\DeclareMathOperator{\sgn}{sgn}
\newcommand{\GEis}{\mathbb{G}}
\newcommand{\FD}{\overline{\mathcal{F}}}
\newcommand{\HH}{\mathfrak{H}}
\newcommand{\ii}{\mathrm{i}}
\renewcommand{\Re}{\mathrm{Re}\,}
\renewcommand{\Im}{\mathrm{Im}\,}
\newcommand{\round}{\mathrm{Round}}
\newcommand{\VOA}{\mathrm{VOA}}
\newcommand{\abcd}{\left(\begin{smallmatrix} a & b \\ c & d \end{smallmatrix}\right) }
\renewcommand{\=}{\: =\: }
\newcommand{\defis}{\: :=\: }
\newcommand{\+}{\,+\,}
\newcommand{\meno}{\,-\,}
\renewcommand{\phi}{\varphi}
\definecolor{darkgreen}{rgb}{0.0, 0.5, 0.0}
\newcommand{\n}{\mathbb{N}}
\newcommand{\z}{\mathbb{Z}}
\newcommand{\q}{\mathbb{Q}}
\renewcommand{\r}{\mathbb{R}}
\renewcommand{\c}{\mathbb{C}}
\newcommand{\dd}{\mathop{}\!\mathrm{d}}
\newcommand{\pdv}[2]{\frac{\partial #1}{\partial #2}}
\theoremstyle{plain} 
\newtheorem{thm}{Theorem}[section]
\newtheorem{lem}[thm]{Lemma} 
\newtheorem{cor}[thm]{Corollary} 
\newtheorem{prop}[thm]{Proposition}
\theoremstyle{definition}
\theoremstyle{remark}
\newenvironment{remark}
  {\pushQED{\qed}\remarkx}
  {\popQED\endremarkx}
\begin{document}
\maketitle
\begin{abstract}
We count the number of zeros of the holomorphic odd weight Eisenstein series in all $\mathrm{SL}_2(\mathbb{Z})$-translates of the standard fundamental domain.
\end{abstract}

\section{Introduction}
Let $\tau \in \HH$, the complex upper half plane. In a famous work, Fanny Rankin and Peter Swinnerton-Dyer showed that all the zeros of the Eisenstein series 
\begin{align}\label{eq:Ek}
E_k(\tau) \= \frac{1}{2}\sum_{\substack{m,n\in \z \\(m,n)=1}} (m\tau+n)^{-k} \qquad (k\geq 4 \text{ even}, (m,n):=\gcd(m,n))
\end{align}
for the full modular group~$\Gamma=\sltwoz$ lie on $\Gamma$-translates of the unit circle \cite{RSD70}. The main idea of their (only one-page) proof is that $e^{\ii k\theta/2}E_k(e^{\ii \theta})$ is a real-valued function for $\theta\in (\pi/3,2\pi/3)$ and that this function is well-approximated by a cosine, i.e.,
\[e^{\ii k\theta/2}E_k(e^{\ii \theta})\=
2\cos k\theta/2 \+ R(\theta) 
.\] The result follows as the weighted number of zeros of~$E_k$ in the standard fundamental domain is $\frac{k}{12}$ (by the modularity of~$E_k\mspace{1mu}$; see \cref{sec:fd}), the cosine has a corresponding number of sign changes and the remainder~$R$ satisfies $|R|<2$. 

For $k=2$ the above sum~\eqref{eq:Ek} does not converge absolutely. However, one can extend the definition of the Eisenstein series by the Eisenstein summation procedure, or, equivalently, by the $q$-expansion
\[ E_k(\tau) \defis 1 \+ c_k\sum_{n\geq 1} \sigma_{k-1}(n)\,q^{n} \qquad (k\geq 2, q=e^{2\pi \ii \tau}),\]
where $\sigma_{k-1}(n) = \sum_{d\mid n} d^{k-1}$ is the well known divisor sum and $c_k$ is the constant $c_k=\frac{(-2\pi \ii)^k}{\zeta(k)(k-1)!}$.
Note that this $q$-expansion is non-trivial, also for odd $k$. Hence, as a byproduct, we now have attained a definition of the main object of study in this work, that is, the Eisenstein series of \emph{odd} weight~$k$. In contrast to the even weight Eisenstein series, which for $k\geq 4$ is modular and for $k=2$ is quasimodular, the odd weight Eisenstein series are not (quasi)modular. The odd weight Eisenstein series are \emph{holomorphic quantum modular forms}, a much weaker notion recently defined by Zagier \cite{Zag20, Whe23}. 

Another, more intrinsic, definition of the even and odd weight Eisenstein series is as follows \cite{GKZ06}. Let 
$G_k$ be given by
\begin{equation}\label{eq:Gk} G_k(\tau) \defis  \sum_{\mu \succ 0}\hspace{-2pt}\mbox{}^{\raisebox{3pt}{\scriptsize{e}}}\hspace{2pt}\frac{1}{\mu^k} \qquad (k\geq 2),
\end{equation}
where $\mu=m\tau+n\in \z\tau+\z$ and the total order~$\succ$ on $\z\tau+\z$ is given by $\mu \succ 0$ if $ m>0$ or if $m=0$ and $n > 0$, and $\mu\succ \nu$ if $\mu-\nu\succ 0$. 
In case $k=2$, the sum does not converge absolutely, and we apply the Eisenstein summation procedure $\sum_{\mu \succ 0}^{\text{e}}:=  \sum_{m=0,n>0}\+\sum_{m>0}\sum_{n\in \z}$. 
Then, 
\[G_k\=\zeta(k)\, E_k \qquad (k\geq 2).\] 

If $E_k$ is not a modular form, there seems a priori neither a reason for an interesting distribution of its zeros nor machinery to count these zeros. Namely, observe that for $k=2$ and odd~$k$, the zeros of~$E_k$ are not invariant under the modular group~$\Gamma$, nor is the number of zeros independent of the choice of a fundamental domain. To our surprise, both concerns can be overcome. 
For the quasimodular Eisenstein series $E_2\mspace{1mu}$, two groups of authors independently determined the distribution of its zeros \cite{IJT14, WY14}, namely, the centers of the Ford circles form a high-precision approximation for the location of these zeros. Both works build on a tool, developed in different works of Sebbar (e.g., \cite{ES10}), which then later was used to determine the distribution of the zeros of derivatives of all even weight Eisenstein series in \cite{GO22} and of quasimodular forms by the authors of the present paper~\cite{IR22b}. 


In this paper, we show how to use the ideas of Rankin and Swinnerton-Dyer to determine the distribution of zeros of the odd weight Eisenstein series. These ideas have been applied in many works on zeros of modular forms, among which in \cite{Ran82} to certain Poincaré series, and in \cite{RVY17} to show that cusp forms of the form $E_k E_\ell-E_{k+\ell}$ (with $k,\ell\geq 4$ even and sufficiently large) have all zeros on the boundary of the fundamental domain. By using these ideas, we bypass the tool of Sebbar, which is not available for the non-quasimodular odd-weight Eisenstein series. 

Write $N_\lambda(f)$ for the weighted number of zeros of~$f$ in~$\gamma\FD$, where $\lambda$  is related to $\gamma=\abcd\in \Gamma$ by $\lambda(\gamma)=\lambda=-\frac{d}{c}\in \mathbb{P}^1(\q)$, and $\FD$ is the closure of the standard fundamental domain for $\Gamma=\sltwoz$ (see \cref{sec:fd}). Recall $N_\lambda(E_k)=\frac{k}{12}$ for \emph{even}~$k$. Now, for \emph{odd}~$k$, the number~$\frac{k}{12}$ is a good approximation for the number of zeros of~$E_k$ within some fundamental domain; more precisely, either rounding $\frac{k}{12}$ up or rounding it down, gives the exact number of zeros: 

\begin{thm}\label{thm:1} For all odd $k\geq 3$ and all $\lambda\in \mathbb{P}^1(\q)$ we have
\[ \left|N_\lambda(E_k) -\frac{k}{12}\right|\,\leq\, \frac{3}{4}.\]
More precisely, the value $N_\lambda(E_k)$, depending on $k \pmod{12}$ and $|\lambda|$ can be found in \cref{tab:1}.
 \begin{table}[ht!]
 \arraycolsep=5pt\def\arraystretch{1.4} \begin{center}
 $\begin{array}{r|ccc}
k \mod 12
 &    |\lambda|\leq \frac{1}{2}, & \frac{1}{2}<|\lambda|\leq 1, & |\lambda|>1 \\\hline
1 & \color{blue} \lfloor \frac{k}{12} \rfloor & \color{blue} \lfloor \frac{k}{12} \rfloor & \color{blue}\lfloor \frac{k}{12} \rfloor \\ 
3 & \color{red} \lceil \frac{k}{12} \rceil & \color{blue}\lfloor \frac{k}{12} \rfloor & \color{blue}\lfloor \frac{k}{12} \rfloor \\ 
5 & \color{red} \lceil \frac{k}{12} \rceil & \color{red} \lceil \frac{k}{12} \rceil & \color{blue}\lfloor \frac{k}{12} \rfloor \\ 
7 & \color{blue}\lfloor \frac{k}{12} \rfloor & \color{blue}\lfloor \frac{k}{12} \rfloor & \color{red} \lceil \frac{k}{12} \rceil \\ 
9  &\color{blue} \lfloor \frac{k}{12} \rfloor & \color{red} \lceil \frac{k}{12} \rceil & \color{red} \lceil \frac{k}{12} \rceil \\ 
11& \color{red} \lceil \frac{k}{12} \rceil & \color{red} \lceil \frac{k}{12} \rceil & \color{red} \lceil \frac{k}{12} \rceil 
 \end{array}$
 \end{center}\vspace{-8pt}
  \caption{The value of~$N_\lambda(E_k)$.}\label{tab:1}
 \end{table}
 \end{thm}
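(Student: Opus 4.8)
The plan is to adapt the Rankin--Swinnerton-Dyer method, but working on the appropriate arc for each $\gamma\FD$ rather than only on the unit circle. First I would fix $\gamma=\abcd\in\Gamma$ with $\lambda=-d/c$, and study $E_k$ on the boundary arc of $\gamma\FD$ that is the image under $\gamma$ of the unit-circle arc $\{e^{\ii\theta}:\pi/3\le\theta\le 2\pi/3\}$; equivalently, via the modular transformation law one pulls everything back to the standard fundamental domain and works with a suitable ``companion'' of $E_k$. Because $E_k$ is \emph{not} modular for odd $k$, the transformed object is $E_k(\gamma\tau)=(c\tau+d)^k E_k(\tau)+(\text{error depending on }\gamma)$; the key point is that the error term, coming from the failure of modularity (expressible through the period polynomial / cocycle attached to the odd-weight Eisenstein series, cf.\ its quantum-modularity), is holomorphic on $\HH$ and, on the relevant compact arc, uniformly small compared to the main term once $k$ is large. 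So on that arc one gets an expansion of the shape $(\text{phase})\cdot E_k = 2\cos(\text{linear phase in the arc parameter}) + R$ with $|R|<2$, exactly as in \cite{RSD70}, giving a controlled number of sign changes and hence a lower bound for the number of boundary zeros.

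The second ingredient is the \emph{argument principle / valence-type count}: even though $N_\lambda(E_k)$ is not literally $k/12$ for odd $k$, one integrates $d\log E_k$ around $\partial(\gamma\FD)$ and tracks the contributions. The vertical sides and the arc at the cusp contribute as in the classical valence formula (giving the $k/12$ ``main term'' plus an integer coming from the order at $\infty$ or at the cusp $\lambda$), while the correction to the contour integral caused by non-modularity is, by the same smallness estimate, too small to change the integer count by more than the indicated amount. Matching the count of guaranteed sign changes on the arc from Step 1 against this total shows the boundary zeros account for all but a bounded discrepancy, and pins $N_\lambda(E_k)$ to either $\lfloor k/12\rfloor$ or $\lceil k/12\rceil$.

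To get the \emph{exact} entry of \cref{tab:1} rather than just the bound $|N_\lambda(E_k)-k/12|\le 3/4$, I would carry out a careful boundary analysis at the two corners (the elliptic points $\ii$ and $\rho=e^{\ii\pi/3}$, or their $\gamma$-images) and at the cusp. The sign of $E_k$ at these distinguished points, together with the parity data $k\bmod 12$, determines whether the approximating cosine has an ``extra'' half-oscillation, i.e.\ whether one rounds up or down; the dependence on the three ranges $|\lambda|\le\frac12$, $\frac12<|\lambda|\le 1$, $|\lambda|>1$ enters because these ranges govern which of the three boundary pieces of $\gamma\FD$ (two geodesic sides and the bottom arc) is ``long'' and thus which corner controls the endpoint behaviour of the real-valued function. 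Concretely I would evaluate $E_k$ at $\ii$, at $\rho$, and asymptotically at the cusp, reduce mod the relevant congruence, and read off the table case by case.

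The main obstacle I anticipate is \textbf{quantifying the non-modularity error} uniformly in both $k$ and $\gamma$: the cocycle term grows with the entries of $\gamma$, so a naive bound degrades as $|\lambda|$ or the ``size'' of $\gamma$ grows, and one must show it is still dominated by $2$ (the amplitude of the cosine) on the whole arc for \emph{every} $\gamma$. I expect this requires either an explicit closed form for the period function of the odd-weight Eisenstein series (so the error is not just $O(\cdot)$ but computable) or a clever choice of which representative arc in the $\Gamma$-orbit to integrate over, reducing the general $\gamma$ case to finitely many ``small'' $\gamma$ plus a monotone tail. Handling the small-$k$ cases ($k=3,5,7,\dots$) where the asymptotic estimates are weakest may additionally need a finite direct verification.
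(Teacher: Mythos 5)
There is a genuine gap, and you have in fact put your finger on it yourself: the ``non-modularity error'' cannot be made uniformly small compared to the main term, and no choice of representative arc will rescue this, because the conclusion of the theorem shows that the integer $N_\lambda(E_k)$ \emph{genuinely jumps} as $\lambda$ crosses $\pm\tfrac12$ and $\pm1$. An error term that were dominated by the cosine amplitude on every $\gamma\FD$ would force the count to be independent of $\lambda$, which is false. The paper avoids the cocycle picture entirely: it writes $E_k|\gamma$ as a lattice sum over the rotated half-lattice $\{(m,n): dm>cn\}$ and observes that the dominant contribution on $\FD$ comes from the four nearest lattice points, giving $1+\sgn(\lambda)\tau^{-k}+\sgn(\lambda+1)(\tau+1)^{-k}+\sgn(\lambda-1)(\tau-1)^{-k}$ plus a remainder $R_{k,\lambda}$ bounded by $6\sqrt5\,(5/2)^{-k/2}$ \emph{uniformly in} $\gamma$ (\cref{lem:Rktau} and \eqref{eq:rkest2}). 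The $\lambda$-dependence of the answer is thus not an error to be absorbed but is carried by the signs $\sgn(\lambda)$, $\sgn(\lambda\pm1)$ (and, for the finer split at $|\lambda|=\tfrac12$, by $\sgn(\tfrac12-\lambda)$ appearing in the next shell $m^2+n^2=5$; see \cref{lem:3.4}). This is the missing idea in your proposal.

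A second, related problem is your reliance on the Rankin--Swinnerton-Dyer mechanism of a real-valued function approximated by $2\cos(k\theta/2)$ with $|R|<2$: for odd $k$ the function $e^{\ii k\theta/2}E_k(e^{\ii\theta})$ is \emph{not} real-valued (the paper stresses this, and indeed the zeros do not lie on the unit circle by \cref{thm:2}), so counting sign changes of a real function is unavailable. The paper instead runs the argument principle around $\partial(\gamma\FD)$ and evaluates each piece $\VOA_{\mathcal{L}},\VOA_{\mathcal{C}},\VOA_{\mathcal{R}}$ separately by (i) showing a fixed real or imaginary part of $E_k|\gamma$ (or $\widehat{E_k|\gamma}$) is nonvanishing along the piece, which traps the argument variation in an interval of length $<\tfrac12$, and (ii) computing the argument of $E_k|\gamma$ at the corners $\rho$, $\rho+1$ and at $\ii\infty$ up to exponentially small errors. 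Your instinct that the values at the elliptic points and the residue of $k$ mod $12$ decide the rounding direction is correct in spirit and matches step (ii), but the case distinction in \cref{tab:1} does not come from ``which boundary piece is long''; it comes from the sign pattern above. Finally, your remark that small $k$ may need direct verification is accurate: the paper's estimates kick in around $k\geq 11$ or $k\geq 19$ and the remaining cases are handled by sharper bounds or direct computation.
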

Inspired by Rankin and Swinnerton-Dyer, for the standard fundamental domain ($\lambda=\infty$) this result is proven by writing
\[E_k(\tau) 
\= 1\+\frac{1}{\tau^k}\+\frac{1}{(\tau+1)^k}\+\frac{1}{(\tau-1)^k} \+ R_k(\tau)
\qquad(\tau \in \FD),
\]
where the remainder~$R_k$ decreases exponentially as $k\to \infty$ (uniformly in~$\tau$).
It turns out that these four terms $1+\tau^{-k}+(\tau+1)^{-k}+(\tau-1)^{-k}$ determine the distribution of the zeros of~$E_k$ in~$\FD$.
Similarly, we obtain a suitable approximation for $E_k$ in~$\gamma\FD$, where the approximation depends on $\gamma\in \sltwoz$. 

Note that for odd~$k$ the function $e^{\ii k\theta/2}E_k(e^{\ii \theta})$ is no longer real-valued for real~$\theta$. Accordingly, the zeros of~$E_k$ in~$\FD$ for odd $k$ do not lie on the unit circle. 
      In this case, all zeros lie arbitrarily close to the unit circle (as $k\to \infty$).
\begin{thm} \label{thm:2}
For all odd $k\geq 3$ all zeros $z$ of~$E_k$ in~$\FD$ satisfy
\[1<|z|<4^{\frac{1}{k}}.\]
\end{thm}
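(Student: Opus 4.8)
We outline the argument. The plan is to deduce \cref{thm:2} from two non-vanishing statements, each proved by the Rankin--Swinnerton-Dyer method applied to the approximation $E_k(\tau)=1+\tau^{-k}+(\tau+1)^{-k}+(\tau-1)^{-k}+R_k(\tau)$ above. These statements are: \textbf{(i)} $E_k(\tau)\neq 0$ for every $\tau\in\FD$ with $|\tau|\ge 4^{1/k}$, which gives $|z|<4^{1/k}$ for each zero $z$ of $E_k$ in $\FD$; and \textbf{(ii)} $E_k(\tau)\neq 0$ on the arc $\{e^{\ii\theta}:\pi/3\le\theta\le 2\pi/3\}=\{\tau\in\FD:|\tau|=1\}$, which, since $|\tau|\ge 1$ on $\FD$, gives $|z|>1$. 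By \cref{thm:1} the Eisenstein series $E_k$ has no zero in $\FD$ when $k\in\{3,5\}$ (there $N_\infty(E_k)=\lfloor k/12\rfloor=0$), so \cref{thm:2} is then vacuous; the smallest non-vacuous weights (concretely $k=7,9,11$, and possibly a few more, depending on the implied constants) will be dispatched by a direct numerical evaluation of $E_k$ on $\FD\cap\{|\tau|\le 4^{1/k}\}$, and the discussion below concerns the remaining large-$k$ case.

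For \textbf{(i)}: every $\tau=x+\ii y\in\FD$ satisfies $|x|\le\tfrac12$, hence $|\tau\pm 1|^2=|\tau|^2\pm 2x+1\ge|\tau|^2$. Thus if $|\tau|\ge 4^{1/k}$, each of $\tau^{-k}$, $(\tau+1)^{-k}$, $(\tau-1)^{-k}$ has modulus at most $\tfrac14$, and — since the further lattice points $m\tau+n$ contributing to $R_k$ have $|m\tau+n|$ bounded below by a constant exceeding $\sqrt 3$ on this part of $\FD$ (the terms with $m=0$ being absorbed into the normalisation $\zeta(k)^{-1}$) — the bound on $R_k$ gives $|R_k(\tau)|<\tfrac14$ there. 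Therefore
\[
|E_k(\tau)|\ \ge\ 1-\bigl(|\tau|^{-k}+|\tau+1|^{-k}+|\tau-1|^{-k}\bigr)-|R_k(\tau)|\ \ge\ \tfrac14-|R_k(\tau)|\ >\ 0 .
\]

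For \textbf{(ii)}: put $\tau=e^{\ii\theta}$ and $g(\theta):=e^{\ii k\theta/2}E_k(e^{\ii\theta})$. Since $e^{\ii\theta}+1=2\cos(\theta/2)\,e^{\ii\theta/2}$ and $e^{\ii\theta}-1=2\ii\sin(\theta/2)\,e^{\ii\theta/2}$, the approximation yields
\[
g(\theta)\ =\ 2\cos\tfrac{k\theta}{2}\ +\ \bigl(2\cos(\theta/2)\bigr)^{-k}\ +\ (-\ii)^{k}\bigl(2\sin(\theta/2)\bigr)^{-k}\ +\ e^{\ii k\theta/2}R_k(e^{\ii\theta}),
\]
where the first two summands are real and the third equals $\pm\ii\,(2\sin(\theta/2))^{-k}$ because $k$ is odd. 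Hence $\Im g(\theta)=\pm\bigl(2\sin(\theta/2)\bigr)^{-k}+\Im\bigl(e^{\ii k\theta/2}R_k(e^{\ii\theta})\bigr)$, and on $[\pi/3,2\pi/3]$ one has $\sin(\theta/2)\le\tfrac{\sqrt3}{2}$, so $(2\sin(\theta/2))^{-k}\ge 3^{-k/2}$. Outside a fixed neighbourhood of the corner $\rho=e^{2\pi\ii/3}$ one in fact has $(2\sin(\theta/2))^{-k}\ge 3^{-k/2}e^{ck}$ for some $c>0$, which beats $|R_k|=O(3^{-k/2})$, so $\Im g(\theta)\neq0$ and $E_k(e^{\ii\theta})\neq0$ there. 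The delicate region is a neighbourhood of $\rho$. There the only lattice points $\mu=m\tau+n\succ 0$ with $|\mu|\le\sqrt3$ are $1,\tau,\tau+1$ (of modulus $1$ at $\rho$) and $\tau-1,\tau+2,2\tau+1$ (of modulus $\sqrt3$ at $\rho$), all others having modulus $\ge 2$ at $\rho$; one therefore moves $(\tau+2)^{-k}$ and $(2\tau+1)^{-k}$ into the principal part, so that the remainder is $O(2^{-k})$. The key observation is that at $\tau=\rho$ the twisted terms $e^{\ii k\theta/2}(\tau+2)^{-k}=3^{-k/2}e^{\ii k\pi/6}$ and $e^{\ii k\theta/2}(2\tau+1)^{-k}=3^{-k/2}e^{-\ii k\pi/6}$ are complex conjugates, so together they contribute the \emph{real} quantity $2\cdot 3^{-k/2}\cos(k\pi/6)$ and do not threaten $\Im g$; a Taylor expansion in $\psi:=\tfrac{2\pi}{3}-\theta$ (using that the moduli $|\tau+2|=|2\tau+1|=\sqrt{5+4\cos\theta}$ coincide and that $\arg(\tau+2)$ has a vanishing first-order term at $\theta=\tfrac{2\pi}{3}$) shows their joint contribution to $\Im g$ is only $O(3^{-k/2}k\psi^2)$, which on the relevant window $|\psi|\lesssim k^{-1/2}$ is dominated by the principal imaginary term $(2\sin(\theta/2))^{-k}\ge 3^{-k/2}$ (growing like $3^{-k/2}e^{c_0 k\psi}$ as $\psi$ increases) and by the $O(2^{-k})$ remainder, once $k$ is large. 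The identity $E_k(-1/\tau)=2-\overline{E_k(\tau)}$ for $|\tau|=1$ — readily derived from the Eisenstein-summation definition of $G_k$ by reindexing the defining sum under $\tau\mapsto-\overline{\tau}$ — gives $\Re E_k(\ii)=1$, so $E_k(\ii)\neq0$, and, more generally, shows that a zero at $e^{\ii\theta}$ would force $E_k(e^{\ii(\pi-\theta)})=2$; this halves the work and in particular explains why nothing goes wrong near $\theta=\pi/3$ (the reflection of the corner window), where $(2\sin(\theta/2))^{-k}$ is comparatively large.

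The main obstacle is exactly the analysis near $\rho$: there the principal imaginary part of $g$ and the tail $R_k$ are \emph{both} of size $3^{-k/2}$, so the contributions $(\tau+2)^{-k}$ and $(2\tau+1)^{-k}$ of modulus $\sqrt3$ cannot be discarded by the triangle inequality and one must instead quantify their conjugate-pair near-cancellation and how it deteriorates as $\theta$ leaves $\tfrac{2\pi}{3}$. The associated book-keeping — identifying which lattice points attain modulus $\sqrt3$ or $2$ near each of the two corners of $\FD$, and pinning down the constants so that the argument already runs for the smallest non-vacuous weight rather than merely for $k$ sufficiently large — is the technical heart of the proof.
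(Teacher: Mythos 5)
Your overall strategy coincides with the paper's: the upper bound $|z|<4^{1/k}$ comes from showing $E_k\neq 0$ on $\FD\cap\{|\tau|\geq 4^{1/k}\}$ via the four-term approximation plus the uniform remainder bound (the paper's \cref{lem:Rktau} and \cref{lem:12withr}, which bound $\Re E_k$ away from zero rather than $|E_k|$, but your triangle-inequality variant is equally valid); the lower bound $|z|>1$ comes from showing $\Im \widehat{E}_k\neq 0$ on the arc $\mathcal{C}$ (the paper's \cref{lem:thm1}); and the smallest weights are dispatched numerically, exactly as in the paper. The one place where you genuinely diverge is the step you yourself single out as the technical heart: controlling the imaginary part of the remainder near $\rho$, where the crude bound $|\widehat{R}_k|=O((5/2)^{-k/2})$ exceeds the principal imaginary term $3^{-k/2}$. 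The paper resolves this with a single exact symmetry: for $m,n>0$ the numbers $me^{\ii\theta/2}+ne^{-\ii\theta/2}$ and $ne^{\ii\theta/2}+me^{-\ii\theta/2}$ are complex conjugates for \emph{every} real $\theta$, so \emph{all} terms of $\Im\widehat{R}_k$ with $m,n>0$ cancel identically in pairs, and what survives is the sum over $m>0$, $n<0$, for which $|me^{\ii\theta}+n|^2=m^2+n^2+2mn\cos\theta\geq 5$ on $[\pi/2,2\pi/3]$, giving an $O(5^{-k/2})$ bound that is comfortably beaten by $3^{-k/2}$. Your pair $e^{\ii k\theta/2}(\tau+2)^{-k}$ and $e^{\ii k\theta/2}(2\tau+1)^{-k}$ is precisely the instance $(m,n)=(1,2),(2,1)$ of this symmetry: the two terms are conjugates not merely at $\tau=\rho$ but identically on the arc, so their joint contribution to $\Im\widehat{E}_k$ is exactly zero and no Taylor expansion in $\psi=\tfrac{2\pi}{3}-\theta$ is needed. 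Noticing this would also remove the one shaky quantitative claim in your write-up: at the edge of your window $|\psi|\lesssim k^{-1/2}$ the asserted error $O(3^{-k/2}k\psi^2)$ is of the \emph{same} order as the lower bound $3^{-k/2}$ you quote for the main term, so ``dominated by'' is not justified as stated (it can be rescued by maximizing $k\psi^2e^{-c_0k\psi}$ over $\psi>0$, which is $O(1/k)$, but this is an unnecessary detour given the exact cancellation). The remaining ingredients --- the identity $E_k(-\overline{\tau})=2-\overline{E_k(\tau)}$ (correct, since $c_k$ is purely imaginary for odd $k$; compare \cref{lem:-lambda}), the observation that the reflected corner near $\theta=\pi/3$ is harmless because $(2\sin(\theta/2))^{-k}\geq 2^{-k/2}$ there, and the vacuity of \cref{thm:2} for $k\in\{3,5\}$ --- are all correct and consistent with the paper.
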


For even $k$, the Eisenstein series~$E_k$ equals up to a multiplicative constant the series 
\begin{align} \GEis_k(\tau) &\defis -\frac{B_{k}}{2k} + \sum_{m,r \geq 1} m^{k - 1} q^{m r} \, \qquad (k\geq 1, B_k \text{ is the $k$th Bernoulli number}).
\end{align}
This is a consequence of the fact that the even zeta values are given by $\zeta(k) = \frac{B_k}{2k}\frac{(-2\pi \ii)^k}{(k-1)!}$. For odd values of~$k$, this formula is false; even more, it is expected that all odd zeta values are algebraically independent of each other and of~$\pi$. Still, $\GEis_k$ is a well-defined holomorphic function for all $k\geq 1$. Recall $B_k=0$ for $k\geq 3$ odd. Hence, $\GEis_k$ equals (up to a multiplicative constant) the lattice sum
\begin{equation}\label{eq:GGk}\sum_{\substack{\mu\gg 0\\(\mu)=1}} \frac{1}{\mu^k}  \= E_k(\tau)-1 \=  \frac{(-2\pi \ii)^k}{\zeta(k)\,(k-1)!}\GEis_k(\tau) \qquad (k\geq 3 \text{ odd}),\end{equation}
where $\mu=m\tau+n\in \z\tau+\z$, $(\mu):=\gcd(m,n)$ and the partial order~$\gg$ on $\z\tau+\z$ is given by $\mu \gg 0$ if $ m>0$ and $\mu\gg \nu$ if $\mu-\nu\gg 0$. The distribution of the zeros of~$\GEis_k$ in~$\FD$ ($k$ odd) is reminiscent of those of~$E_\ell'$ ($\ell$ even), both of which admit their zeros on the sides $z=\pm \frac{1}{2}$ of the fundamental domain. In contrast, if $\gamma$ does not fix $\ii\infty$,
the series $\GEis_k$ and $E_\ell'$ have a completely different distribution of zeros in~$\gamma \FD$. In fact,  $\GEis_k$ has exactly the same number of zeros in~$\gamma \FD$ as $E_k\mspace{1mu}$, unless $\lambda(\gamma)\in \{0,\pm1,\infty\}$.
\begin{thm}\label{thm:3} For all odd $k\geq 3$ 
and $\lambda\in \mathbb{P}^1(\q)\backslash\{0,\pm 1,\infty\}$ we have
\[ N_\lambda(\GEis_k)\=N_\lambda(E_k).\]
\end{thm}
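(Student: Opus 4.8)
The plan is to exploit the fact that $\GEis_k$ and $E_k$ differ only by the constant term: by \eqref{eq:GGk} we have $\GEis_k = \mathrm{const}\cdot(E_k-1)$, so $N_\lambda(\GEis_k)$ counts the weighted zeros of $E_k-1$ in $\gamma\FD$, while $N_\lambda(E_k)$ counts those of $E_k$. Thus the difference $N_\lambda(E_k)-N_\lambda(\GEis_k)$ measures how the zero count changes when the constant $1$ is subtracted. The strategy is to run the same Rankin--Swinnerton-Dyer style argument used to prove \cref{thm:1}: one shows that on $\gamma\FD$ the function $E_k$ (suitably normalized by a power of the relevant linear form) is, up to an exponentially small remainder $R_k$, equal to a short explicit sum of terms $\mu^{-k}$ coming from the lattice points $\mu$ nearest to the relevant vertices/arcs of $\gamma\FD$, plus possibly the constant $1$ when $\gamma\FD$ is close to $\ii\infty$. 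The point is that for $\gamma$ with $\lambda(\gamma)\notin\{0,\pm1,\infty\}$, the domain $\gamma\FD$ is bounded away from $\ii\infty$ and from the three special cusps, so the constant term $1$ contributes a term that is \emph{negligible} compared to the dominant lattice terms — whereas near $\ii\infty$ (i.e.\ $\lambda=\infty$) or near $0,\pm1$ the constant is of the same order as the dominant terms and genuinely changes the count.

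Concretely, I would first record the transformation behaviour: for $\gamma=\abcd$, the quantity $(c\tau+d)^k E_k(\gamma\tau)$ has an expansion as a finite sum of $\mu^{-k}$ over lattice vectors $\mu=m\tau+n$, where the relevant $\mu$ are those minimizing $|\mu|$ on $\gamma\FD$ — this is precisely the mechanism behind the approximation $E_k(\tau)\approx 1+\tau^{-k}+(\tau\pm1)^{-k}$ quoted after \cref{thm:1}, transported to $\gamma\FD$. Subtracting the constant $1$ from $E_k$ corresponds, after multiplying by $(c\tau+d)^k$, to subtracting $(c\tau+d)^k$, i.e.\ adding a term of size $\asymp |c\tau+d|^k$ on $\gamma\FD$. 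One then needs the elementary geometric fact that for $\lambda(\gamma)=-d/c\notin\{0,\pm1,\infty\}$ one has $|c\tau+d|<|\mu_{\min}(\tau)|$ uniformly on $\gamma\FD$, where $\mu_{\min}$ is the shortest relevant lattice vector governing the dominant term of $E_k$ on $\gamma\FD$; hence the extra term is exponentially dominated as $k\to\infty$ and, for all odd $k\geq3$, lies strictly below the threshold that could create or destroy a zero. The zeros, and hence the weighted count $N_\lambda$, are therefore unchanged: $N_\lambda(\GEis_k)=N_\lambda(E_k)$.

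The key steps, in order: (i) set up the normalized function $\widetilde{E}_k(\tau):=(c\tau+d)^k E_k(\gamma\tau)$ on $\FD$ and derive its RSD-type approximation with explicit main term(s) $P_k(\tau)$ and exponentially small remainder, distinguishing the cases of $|\lambda|$ as in \cref{tab:1}; (ii) observe that $(c\tau+d)^k(E_k(\gamma\tau)-1)=\widetilde E_k(\tau)-(c\tau+d)^k$, so $\GEis_k$ on $\gamma\FD$ corresponds to $P_k(\tau)-(c\tau+d)^k + (\text{small})$; (iii) prove the uniform inequality $\sup_{\tau\in\FD}|c\tau+d| < \inf_{\tau\in\FD}|\mu_{\min}(\tau)|$ for $\lambda\notin\{0,\pm1,\infty\}$, which is a finite case analysis on the geometry of $\FD$ versus the relevant lattice vectors, controlled by the condition $|\lambda|\neq 0,1,\infty$ pinning $|c\tau+d|$ away from the minimal values; (iv) conclude via an argument-principle / winding-number comparison (Rouché) that $P_k-(c\tau+d)^k$ and $P_k$ have the same weighted number of zeros in $\FD$, hence $\GEis_k$ and $E_k$ have the same count in $\gamma\FD$.

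The main obstacle I anticipate is step (iii) together with making step (iv) rigorous for \emph{small} odd $k$ (especially $k=3,5$), not merely asymptotically: the RSD method gives clean control for large $k$, but to obtain an exact equality for every odd $k\geq3$ one must either verify the finitely many small cases by an explicit (possibly numerical, then certified) computation, or find a self-improving inequality. Establishing the clean geometric separation $|c\tau+d|<|\mu_{\min}|$ uniformly on $\FD$ requires carefully identifying, for each of the regimes $|\lambda|\le\frac12$, $\frac12<|\lambda|\le1$, $|\lambda|>1$, exactly which lattice vector $\mu_{\min}$ dominates, and checking that excluding $\lambda\in\{0,\pm1,\infty\}$ is precisely what prevents a tie. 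I expect this boundary analysis — and the borderline behaviour as $\lambda\to0,\pm1,\infty$, where \cref{thm:3} must fail — to be the technical heart of the argument.
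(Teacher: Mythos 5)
Your overall strategy is the same as the paper's: under the slash action the constant term of $E_k$ becomes the single lattice term $(c\tau+d)^{-k}$, and the hypothesis $\lambda=-d/c\notin\{0,\pm1,\infty\}$ is exactly the statement that $c^2+d^2\geq 5$, so this term can be absorbed into the Rankin--Swinnerton-Dyer remainder $R_{k,\lambda}$ (this is precisely \eqref{eq:GkEkcomp} in the paper); all boundary estimates proved for $E_k|\gamma$ in the course of \cref{thm:1} then apply verbatim to $\GEis_k|\gamma$, and the two weighted counts coincide. However, two points in your step (iii) are off, one cosmetic and one substantive. Cosmetically: with the standard normalization $E_k|\gamma(\tau)=(c\tau+d)^{-k}E_k(\gamma\tau)$ the extra term is $(c\tau+d)^{-k}$, not $(c\tau+d)^{k}$, and your inequality points the wrong way --- for $(c\tau+d)^{-k}$ to be dominated by $\mu_{\min}^{-k}$ you need $|c\tau+d|$ to be \emph{large}, i.e.\ $|c\tau+d|>|\mu_{\min}|$.

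Substantively, even after correcting the direction, the uniform separation $\sup$ versus $\inf$ over all of $\FD$ that you propose is false and would not suffice. On the circular arc near $\theta=2\pi/3$ the term controlling $\Im\widehat{E_k|\gamma}$ is $(2\sin\tfrac12\theta)^{-k}$, which degenerates to $3^{-k/2}$, while the a priori bound $|c\tau+d|^2\geq\tfrac12(c^2+d^2)\geq\tfrac52$ only gives $|c\tau+d|^{-k}\leq(5/2)^{-k/2}$, and $(5/2)^{-k/2}>3^{-k/2}$. What saves the argument is the \emph{pointwise} estimate $|ce^{\ii\theta}+d|^2=c^2+d^2+2cd\cos\theta\geq 5$ on the subinterval where the main term is smallest --- this is exactly the case split $[\pi/3,\pi/2]$ versus $[\pi/2,2\pi/3]$ in \cref{lem:I}, and it is why the term $(c\tau+d)^{-k}$ must be estimated jointly with the rest of the remainder at each $\theta$ rather than against a single global constant. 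Two further details you should not gloss over: negative $\lambda$ (the paper reduces $-\lambda$ to $\lambda$ via \cref{lem:-lambda}, using that $\ii\GEis_k$ has real Fourier coefficients), and the finitely many small odd $k$, which you correctly anticipate require sharper constants or direct verification.
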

\begin{thm}\label{thm:4}
The weighted number of zeros of~$\GEis_k$ ($k\geq 3$~odd) in~$\FD$ equals
\[N_\infty(\GEis_k) \= \begin{cases} \bigl\lceil \frac{k}{6} \bigr\rceil & \text{if } k\equiv 3,5,11 \bmod 12\\[3pt]
\bigl\lfloor \frac{k}{6} \bigr\rfloor & \text{if } k\equiv 1,7,9 \bmod 12.
\end{cases}
\]
 All these zeros are located on the vertical boundaries \[\bigl\{\pm \tfrac{1}{2} + \ii t \,:\, t \geq \tfrac{1}{2}\sqrt{3} \bigr\} \,\cup\, \{ \ii \infty\}.\]
\end{thm}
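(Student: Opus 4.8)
The plan is to follow the Rankin--Swinnerton-Dyer strategy, now applied to $\GEis_k$ on the two vertical sides of $\FD$ together with a residue/argument-principle count that pins down the remaining zero on the geodesic $|z|=1$ or at $\ii\infty$. First I would use \eqref{eq:GGk} to write $\GEis_k$ (up to the nonzero constant $\frac{(-2\pi\ii)^k}{\zeta(k)(k-1)!}$) as $E_k(\tau)-1$, and then invoke the approximation already recorded in the excerpt, $E_k(\tau)-1=\tau^{-k}+(\tau+1)^{-k}+(\tau-1)^{-k}+R_k(\tau)$ with $R_k$ exponentially small uniformly on $\FD$. On the line $\tau=-\tfrac12+\ii t$ (and symmetrically $\tau=\tfrac12+\ii t$), the three main terms combine: $(\tau+1)^{-k}=(\tfrac12+\ii t)^{-k}$ and $(\tau-1)^{-k}=(-\tfrac32+\ii t)^{-k}$, while $\tau^{-k}=(-\tfrac12+\ii t)^{-k}=\overline{(\tfrac12+\ii t)^{-k}}$ up to the sign $(-1)^k=-1$. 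Writing $\tfrac12+\ii t=\rho(t)e^{\ii\psi(t)}$, the dominant piece $\tau^{-k}+(\tau+1)^{-k}$ equals $\rho(t)^{-k}\bigl(e^{\ii k\psi}-e^{-\ii k\psi}\bigr)\cdot(\pm 1)=\pm 2\ii\,\rho(t)^{-k}\sin(k\psi(t))$, which is purely imaginary, whereas the third term $(-\tfrac32+\ii t)^{-k}$ is of strictly smaller modulus (since $|-\tfrac32+\ii t|>|\tfrac12+\ii t|$ for all real $t$) and, crucially, is $o$ of the gap between consecutive sign changes of $\sin(k\psi)$. So the key step is to show that an appropriate rotation $e^{\ii\alpha(t)}\GEis_k(-\tfrac12+\ii t)$ is real-valued and equals $c\sin(k\psi(t))$ plus a genuinely smaller error; the sign changes of $\sin(k\psi(t))$ as $t$ runs over $[\tfrac12\sqrt3,\infty)$ then force the stated number of zeros on that side, the endpoint $t=\tfrac12\sqrt3$ being the corner $\zeta_6=e^{\ii\pi/3}$ where $\psi=\pi/3$ and $\GEis_k$ vanishes precisely when $k\psi\in\pi\z$, i.e.\ $3\mid k$.

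Second I would determine the total weighted zero count $N_\infty(\GEis_k)$ independently, via a contour integral of $\frac{\GEis_k'}{\GEis_k}$ around $\partial\FD$ (truncated at height $T$ and completed at $\ii\infty$). Because $\GEis_k$ is not modular, the integrals over the paired vertical sides and over the two halves of the unit circle no longer cancel, but the approximation $\GEis_k=\tau^{-k}+(\tau+1)^{-k}+(\tau-1)^{-k}+R_k$ lets me replace $\GEis_k$ by its main term on $\partial\FD$ with an error that contributes nothing to the (integer) winding number once $k$ is fixed. The winding number of $\tau^{-k}+(\tau+1)^{-k}+(\tau-1)^{-k}$ around $\partial\FD$ is an elementary computation depending only on $k\bmod 12$ (the relevant angles at the corners $\zeta_6,\zeta_3$ and along the arc are multiples of $\pi/6$), and I expect it to evaluate to $\lceil k/6\rceil$ or $\lfloor k/6\rfloor$ exactly according to the stated congruences. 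Comparing this global count with the number of zeros already located on the two vertical sides shows that no zeros remain on the arc $|z|=1$ with $|\Re\tau|<\tfrac12$ and none in the interior; any discrepancy of one is absorbed by a zero at the cusp $\ii\infty$, which is detected by the behaviour of $\GEis_k$ as $\Im\tau\to\infty$ (its $q$-expansion starts at $q^1$, so $\GEis_k$ indeed vanishes at $\ii\infty$, contributing to $N_\infty$ exactly when the bookkeeping requires it).

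Third, to make ``$R_k$ is negligible'' precise I would reuse the uniform exponential bound on $R_k$ from the proof of \cref{thm:1,thm:2} (the same estimate $|R_k(\tau)|\le C\cdot 4^{-k}$ type bound on $\FD$), and on the vertical sides compare it against a lower bound for $\bigl|\,|\tau|^{-k}\sin(k\psi(t))\,\bigr|$ away from its zeros, plus a separate local analysis near each zero of $\sin(k\psi(t))$ using that $\psi'(t)\neq 0$ so the zeros are simple and well-separated (spacing $\asymp 1/k$ near the corner, growing like $t^2/k$ as $t\to\infty$). Between consecutive sign changes the dominant term has modulus bounded below by something like $\rho(t)^{-k}\cdot\tfrac{1}{k}$ in an averaged sense, which still dominates both $R_k$ and the third main term $(-\tfrac32+\ii t)^{-k}$ because the ratio $(|\tfrac12+\ii t|/|-\tfrac32+\ii t|)^k$ decays exponentially in $k$ uniformly in $t\ge\tfrac12\sqrt3$.

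The main obstacle, I expect, is exactly this last point: controlling $\GEis_k$ near the zeros of the model function $\sin(k\psi(t))$ and near the two corners, where the ``smaller'' terms are only smaller by a factor that is uniform in $k$ but not uniform in $t$, so one needs the right two-term model (keeping $\tau^{-k}+(\tau+1)^{-k}$ as the oscillating main term and bounding $(\tau-1)^{-k}+R_k$ together) and a careful Rouché argument on thin vertical boxes around each predicted zero. The corner case $3\mid k$ (where a zero sits exactly at $\zeta_6$ and must be counted with the correct weight $\tfrac12$ in $N_\infty$) is the source of the split between $\lceil k/6\rceil$ and $\lfloor k/6\rfloor$, and matching this with the global winding-number computation is the step most likely to require delicate case analysis modulo $12$.
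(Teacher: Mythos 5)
Your overall architecture (sign changes of a $\sin$-type model on the vertical sides, plus a global count that excludes zeros elsewhere) matches the paper's in spirit, but there are concrete gaps in both halves. On the vertical sides, the decomposition $E_k-1=\tau^{-k}+(\tau+1)^{-k}+(\tau-1)^{-k}+R_k$ with the \emph{absolute} bound $|R_k|\le 6\sqrt5\,(5/2)^{-k/2}$ is useless once $t$ is moderately large: your oscillating main term has modulus $2(\tfrac14+t^2)^{-k/2}$, which is far smaller than that remainder bound for $t\gtrsim 2$. You would need a bound on $|R_k|$ \emph{relative} to $|\tau|^{-k}$, and even that degenerates near $\ii\infty$. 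Your claim that $\bigl(|\tfrac12+\ii t|/|{-\tfrac32}+\ii t|\bigr)^k$ decays exponentially in $k$ uniformly in $t\ge\tfrac12\sqrt3$ is false --- the base tends to $1$ as $t\to\infty$ --- so the term $(\tau-1)^{-k}$ (on $\mathcal{L}$; $(\tau+1)^{-k}$ on $\mathcal{R}$) is not uniformly negligible. The paper is forced to split into two regimes at the sample points $z_\ell=\tfrac12+\tfrac{\ii}{2}\cot(\pi\ell/k)$: for $3\ell^2\ge k$ it uses the lattice sum with a relative remainder bound imported from Gun--Oesterl\'e, while for $3\ell^2\le k-1$ it abandons the lattice picture entirely and shows from the $q$-expansion that the single term $\ell^{k-1}q^\ell/(1-q^\ell)$ dominates $\GEis_k(z_\ell)$. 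This second regime is missing from your plan and cannot be recovered from the $R_k$ estimate. Separately, your assertion that $\GEis_k$ vanishes at the corner $e^{\ii\pi/3}$ when $3\mid k$ is wrong (the paper computes $\arg\widehat{\GEis}_k(\rho+1)=\pi/2$ for $k\equiv3\bmod 6$, so the value is nonzero; also that corner carries weight $\tfrac16$, not $\tfrac12$); the extra zero for $k\equiv3,5,11\bmod{12}$ comes from an additional sign change strictly between the corner and $z_{\lfloor k/6\rfloor}$.

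The global step is also not as elementary as you suggest. A winding number of $\GEis_k$ around $\partial\FD$ is ill-defined here precisely because the zeros you are counting sit \emph{on} $\partial\FD$, and ``replace $\GEis_k$ by its main term with an error that contributes nothing to the integer winding number'' begs the question near those boundary zeros. Moreover, on the arc $\mathcal{C}$ the term $(2\sin\tfrac\theta2)^{-k}$ equals $1$ at the corners, i.e.\ it is comparable to the oscillating term $2\cos(k\theta/2)$, not a small perturbation, so the ``elementary computation'' of the main term's argument variation does not go through by perturbation. The paper's substitute is a genuinely different idea: it proves that $H_k(\theta)=\ii^{k+1}\Im\widehat{E}_k(\theta)$ is positive and strictly convex on $[\pi/3,2\pi/3]$ (\cref{lem:convex}) and then bounds the number of zeros of $\sin(k\theta/2)\pm H_k(\theta)$ via \cref{lem:zeros}, which controls $\VOA_{\mathcal{C}}(\widehat{\GEis}_k)$ and hence, through \cref{lem:VOA} (where the two vertical sides cancel by $1$-periodicity), yields the upper bound that is matched against the sign-change lower bound. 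Without this convexity input, or some replacement for it, your argument-principle step does not close.
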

\begin{figure}[!t] 
\caption{\small Zeros of the Eisenstein series~$E_{23}$ and~$\GEis_{23}$ in fundamental domains~$\gamma\FD$ with ${\lambda(\gamma)=0,\pm \frac{1}{2}, \pm {1},\infty}$. Note that the zero of~$\GEis_{23}$ at the cusp at infinity is not displayed. Moreover, on these scales, one cannot observe that the zeros of $E_{23}$ are very close but not on the unit circle, whereas $\GEis_{23}$ admits zeros (exactly) on the side of $\FD$.
}\vspace{7pt}
  \begin{subfigure}[b]{0.49\textwidth}
    \includegraphics[width=\textwidth]{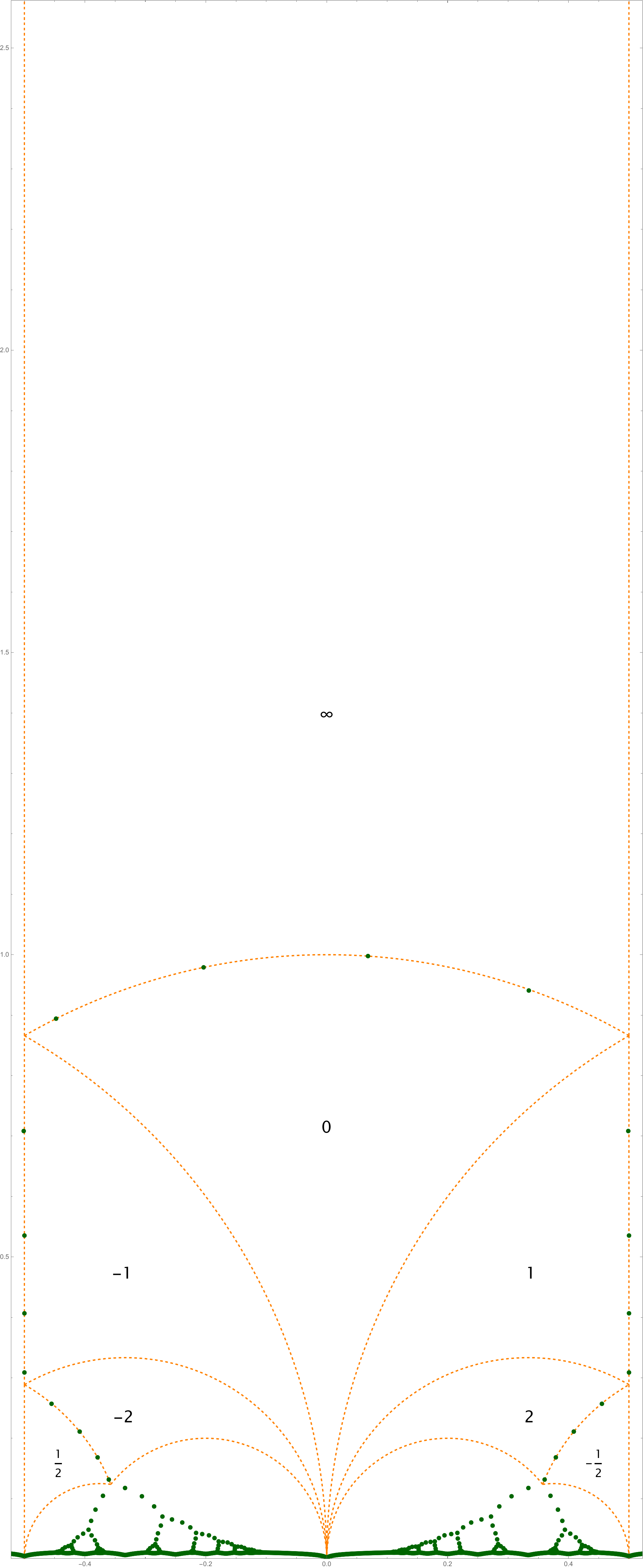}
    \caption{The approximate location of 1500 zeros of~$E_{23}\mspace{1mu}$.}
    \label{fig:zerosa}
  \end{subfigure}
  \hfill
  \begin{subfigure}[b]{0.49\textwidth}
    \includegraphics[width=\textwidth]{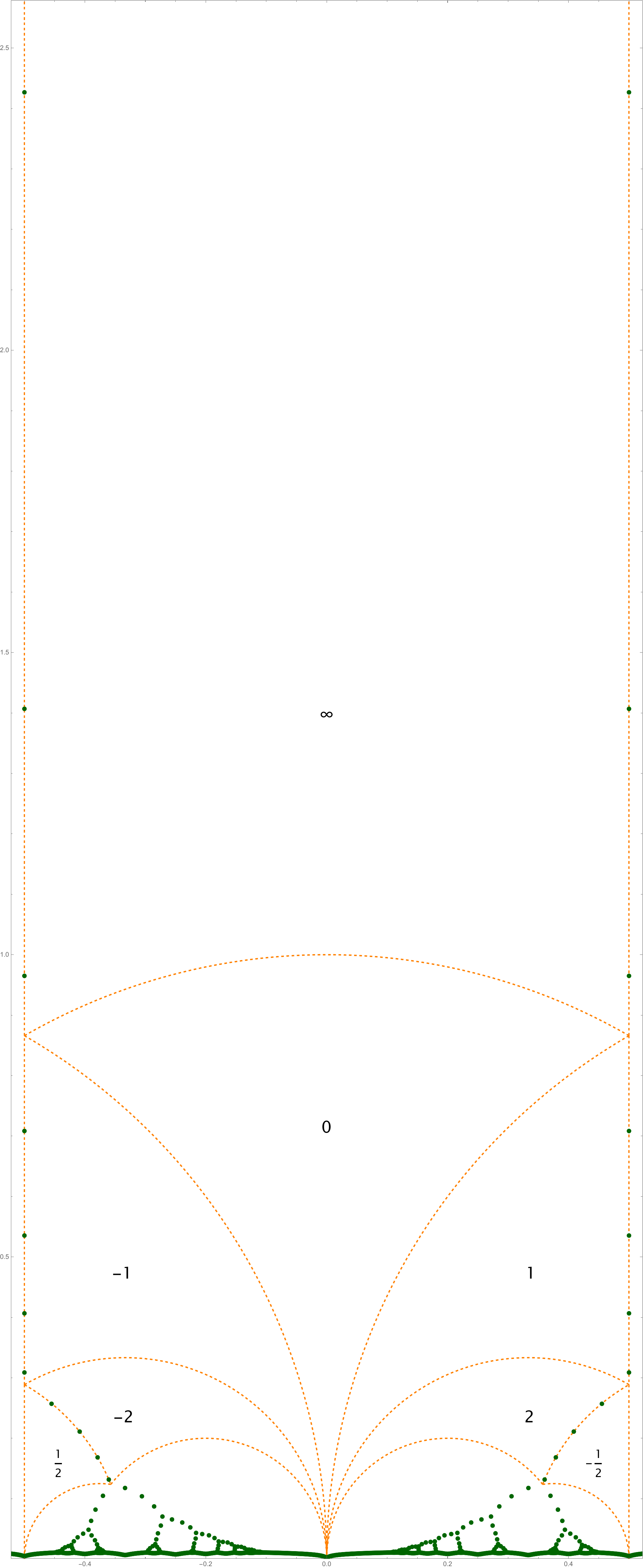}
    \caption{The approximate location of 1500 zeros of~$\GEis_{23}\mspace{1mu}$.}
    \label{fig:zerosb}
  \end{subfigure}
  \label{fig:zeros}
\end{figure}
\begin{remark}\mbox{}\\[-20pt]
\begin{enumerate}[{\upshape (i)}]\itemsep0pt
    \item Let $\gamma \in \Gamma$. Unless $\lambda(\gamma)\in \{\pm 1, \infty\}$, the zeros of~$\GEis_k$ are located in the interior of~$\gamma\FD$. Note that $\lambda(\gamma)=\infty$ corresponds to the zeros of~$\GEis_k$ in~$\FD$ in \cref{thm:4} above. If $\lambda(\gamma)=\pm 1$, the zeros of~$\GEis_k$ lie on~$\gamma \mathcal{C}$ with $\mathcal{C}$ the intersection of~$\partial\FD$ with the unit circle~(defined in~\cref{sec:fd}).
   \item The zeros of~$E_k$ and $E_{k+12}$ are known to interlace on the unit circle for angles $\theta$ with $\frac{1}{2}\pi<\theta<\frac{2}{3}\pi$ if $k$ is even \cite{Gek01,Noz08}. Numerical computations suggest the same interlacing property holds for the arguments of the zeros of~$E_k$ and $E_{k+12}$ in~$\FD$. Moreover, the zeros of~$\GEis_k$ and $\GEis_{k+6}$ in~$\FD$ seem to interlace on the vertical boundaries. 
   \item Upon agreeing that $c_1 = \frac{-2\pi \ii}{\zeta(1)}=0$, i.e., $E_1 \equiv  1$, \cref{thm:1} and \cref{thm:2} trivially extend to $k=1$. In contrast, \cref{thm:3} and \cref{thm:4} are false for~$\GEis_1\mspace{1mu}$; it remains a non-trivial interesting open question to describe the distribution of its zeros. Based on our numerical experiments, it is not clear whether $N_\lambda(\GEis_1)$ is piecewise constant in $\lambda$. Also, note that some authors consider $\GEis_1$ with the different convention for the Bernoulli number~$B_1\mspace{1mu}$, i.e., they define $\GEis_1$ with constant term $-\frac{1}{4}$ rather than $\frac{1}{4}$. \qedhere
\end{enumerate}
\end{remark}

In \cref{sec:infty} we define the counting function~$N_\lambda$ and prove all results on zeros in~$\overline{\mathcal{F}}$, i.e., \cref{thm:2} and \cref{thm:4}. In \cref{sec:alllambda} we extend these results to all $\Gamma$-translates of the standard fundamental domain and prove \cref{thm:1} and \cref{thm:3}. 

\subsection*{Acknowledgement}
A question of Can Turan during the conference \emph{Modular Forms in Number Theory and Beyond} in Bielefeld triggered us to investigate the zeros of the odd weight Eisenstein series. Therefore, we thank him and the organizers of this conference. We also thank Wadim Zudilin for inspiring remarks. The first author was supported by the SFB/TRR 191 “Symplectic Structure in Geometry, Algebra and Dynamics”, funded by the DFG (Projektnr. 281071066 TRR 191).

\section{Zeros in the standard fundamental domain}\label{sec:infty}
\subsection{Preliminaries}
\paragraph{The fundamental domain}\label{sec:fd}
Let $\HH=\{z\in \c \mid \mathrm{Im}(z)>0\}$ be the \emph{complex upper half plane}, $\HH^*=\HH\cup \mathbb{P}^1(\q)$ be the \emph{extended upper half plane} and
 \[\FD \defis \{z \in \HH : |z| \geq 1, -\tfrac{1}{2}\leq \Re(z) \leq \tfrac{1}{2}\}  \cup \{\ii\infty\}\]
the standard (closed) \emph{fundamental domain} for the action of $\Gamma=\sltwoz$ on~$\HH^*$, where $\ii\infty$ is the point $[1:0]\in \mathbb{P}^1(\q)$ at infinity. 

\begin{wrapfigure}{r}{0.35\textwidth} 
    \centering \vspace{-18pt}
    \includegraphics[clip, trim=2.5cm 19.5cm 13.8cm 3.5cm]{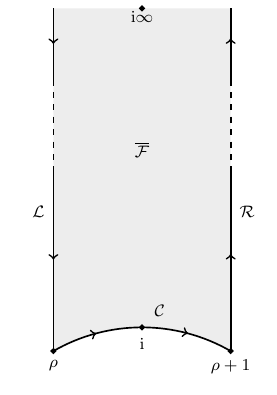}\vspace{-10pt}
    \caption{The fundamental domain}\label{fig:contour}
    \vspace{-70pt}
\end{wrapfigure}

Moreover, we write $\mathcal{C}, \mathcal{L}$ and $\mathcal{R}$ for the circular part, left vertical half-line and right vertical half-line of the boundary~$\partial \FD$ of~$\FD$, i.e., $\partial \FD = \mathcal{L} \cup \mathcal{C} \cup \mathcal{R}$ with
\begin{align}\label{eq:C} \mathcal{C} &\defis \{ z\in \HH : |z|=1, -\tfrac{1}{2}\leq \Re(z)\leq \tfrac{1}{2} \},\\
\label{eq:L} \mathcal{L} &\defis \{ z\in \HH : |z|\geq 1, \Re(z)= -\tfrac{1}{2}\} \,\cup\, \{\ii\infty\},\\
\mathcal{R} &\defis\{ z\in \HH : |z|\geq 1, \Re(z)= \tfrac{1}{2}\} \,\cup\, \{\ii\infty\}.
\end{align}
We orientate these curves such that $\partial \FD$ is positively orientated. 
Write $\rho:=-\frac{1}{2}+\frac{\ii}{2}\sqrt{3}$. To $\tau \in \FD$, we associate the following weight:
\[ 
w(\tau) \defis
\begin{cases}
\frac{1}{6} & \tau \in \{\rho,\rho+1\} \\[2pt]
\frac{1}{2} & \tau \in \partial\FD \backslash\{\rho,\rho+1,\ii\infty\} \\[2pt]
1 & \tau \in (\FD \backslash \partial\FD)\cup\{\ii\infty\}.
\end{cases}
\]
We extend $w$ to $\HH\cup \mathbb{P}^1(\q)$ under the action of~$\Gamma$, i.e.,  $w(\gamma \tau) = w(\tau)$ for all $\gamma \in \Gamma$ and $\tau \in \FD$. 

\paragraph{Order of vanishing at a cusp}
Let $f:\HH\to \c$ be a holomorphic function with some associated weight~$k$. The corresponding \emph{slash action of weight~$k$} is defined by
\begin{align}\label{eq:slash} f|\gamma(\tau) \defis (c\tau+d)^{-k}f(\gamma\tau) \qquad (\gamma=\abcd \in \Gamma),\end{align}
where $\gamma$ acts on~$\HH$ by M\"obius transformations. Note that the element $-1=\left(\begin{smallmatrix} -1 & 0 \\ 0 & -1 \end{smallmatrix}\right)\in \Gamma$ acts trivially on~$\HH$, but $f|(-1)=-f$ is non-trivial for \emph{odd} $k$.

We say $f$ \emph{is holomorphic at infinity} if $f$ admits a Fourier expansion $f=\sum_{n\geq 0} a_n\,q^n$ 
 (with $q=e^{2\pi\ii \tau}$) for some $a_n\in \c$. Moreover, we say $f$ is \emph{well-behaved at a cusp} $\alpha \in \mathbb{P}^1(\q)$, if 
\[\nu_\alpha(f)\defis\lim_{r\to \infty} \int_{-\frac{1}{2}+\ii r}^{\frac{1}{2}+\ii r} \pdv{}{\tau}\log(f|\gamma(\tau)) \dd \tau,\]
where $\gamma\in \Gamma$ is such that $\gamma(\ii\infty) = \alpha$, is a (well-defined) non-negative integer. In that case, we call~$\nu_\alpha(f)$ \emph{the order of vanishing of~$f$ at the cusp} $\alpha$.  Observe that if $f$ is non-trivial and holomorphic at infinity, then 
\begin{align}\label{eq:holatinf} \nu_\infty(f) \= \min\{n\geq 0 \mid a_n \neq 0\}.\end{align}

\begin{lem}
For all odd $k\geq 3$ one has $\nu_\infty(E_k)=0$ and
$\nu_\infty(\GEis_k)=1$. Moreover, for all $\alpha\in \mathbb{P}^1(\q)\backslash\{\infty\}$ one has
\[ \nu_\alpha(E_k) \= 0\= \nu_{\alpha}(\GEis_k).\]
\end{lem}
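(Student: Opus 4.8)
The plan is to treat the cusp at $\infty$ by inspection of the $q$-expansions and the finite cusps by an explicit computation of the slashed series.

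\emph{The cusp at infinity.} Both values of $\nu_\infty$ are read off from \eqref{eq:holatinf}. The $q$-expansion of $E_k$ has constant term $a_0=1\neq 0$, so $\nu_\infty(E_k)=0$. For $\GEis_k$ with $k\ge 3$ odd we use $B_k=0$: then $a_0=0$, while the coefficient of $q^1$ receives the single contribution from $(m,r)=(1,1)$, namely $1^{k-1}=1\neq 0$, so $\nu_\infty(\GEis_k)=1$.

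\emph{The finite cusps.} Let $\alpha\in\mathbb P^1(\q)\setminus\{\infty\}$ and pick $\gamma=\abcd\in\Gamma$ with $\gamma(\ii\infty)=\alpha$; since $\alpha\neq\infty$, $c\neq 0$. I would first record, from \eqref{eq:GGk}, the absolutely and locally uniformly convergent lattice expansion
\[E_k(\tau)-1\=\sum_{\substack{m\ge 1,\,n\in\z\\ \gcd(m,n)=1}}(m\tau+n)^{-k},\qquad \GEis_k\=\frac{\zeta(k)(k-1)!}{(-2\pi\ii)^k}\,\bigl(E_k-1\bigr),\]
valid for odd $k\ge 3$. Using $m\gamma\tau+n=\frac{(ma+nc)\tau+(mb+nd)}{c\tau+d}$, the slash action then yields
\[E_k|\gamma(\tau)\=(c\tau+d)^{-k}\+\sum_{(m',n')\in S_\gamma}(m'\tau+n')^{-k},\qquad \GEis_k|\gamma(\tau)\=\frac{\zeta(k)(k-1)!}{(-2\pi\ii)^k}\sum_{(m',n')\in S_\gamma}(m'\tau+n')^{-k},\]
where $S_\gamma$ is the (injective) image of $\{(m,n)\in\z^2:m\ge 1,\ \gcd(m,n)=1\}$ under $\binom{m}{n}\mapsto\left(\begin{smallmatrix}a&c\\ b&d\end{smallmatrix}\right)\binom{m}{n}$.

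The crux is then to let $\Im\tau\to\infty$ along horizontal lines $\{\Im\tau=y\}$. Splitting $S_\gamma$ by whether the first coordinate vanishes: the term $(c\tau+d)^{-k}$ and, using $|m'\tau+n'|\ge|m'|\,y$, the tail $\sum_{(m',n')\in S_\gamma,\,m'\ne 0}|m'\tau+n'|^{-k}\le\sum_{m'\ne 0}\sum_{n'\in\z}|m'\tau+n'|^{-k}\ll_k y^{1-k}$ both tend to $0$, uniformly in $\Re\tau$. The pairs $(m,n)$ mapped to first coordinate $0$ solve $ma+nc=0$ with $m\ge 1$, $\gcd(m,n)=1$; as $\gcd(a,c)=1$ the only one is $(m,n)=(|c|,-\sgn(c)\,a)$, whose image has second coordinate $mb+nd=\sgn(c)(cb-ad)=-\sgn(c)$. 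Hence $E_k|\gamma$ and $\GEis_k|\gamma$ converge, uniformly on horizontal lines, to $(-\sgn(c))^{-k}$ and $\frac{\zeta(k)(k-1)!}{(-2\pi\ii)^k}(-\sgn(c))^{-k}$ respectively, which are nonzero.

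Finally, for $f\in\{E_k,\GEis_k\}$ the function $f|\gamma$ is holomorphic on $\HH$ and tends uniformly on each line $\{\Im\tau=r\}$ to a nonzero limit $L$; so for $r$ large the segment from $-\tfrac12+\ii r$ to $\tfrac12+\ii r$ is mapped by $f|\gamma$ into a disc around $L$ avoiding $0$, on which a branch of $\log$ is holomorphic. The defining integral then telescopes to $\log\!\bigl(f|\gamma(\tfrac12+\ii r)\bigr)-\log\!\bigl(f|\gamma(-\tfrac12+\ii r)\bigr)$, which tends to $\log L-\log L=0$ as $r\to\infty$. Thus $\nu_\alpha(E_k)=\nu_\alpha(\GEis_k)=0$ (and in particular these are well-defined non-negative integers). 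The main obstacle is the uniform-in-$\Re\tau$ estimate on the Eisenstein tail together with the combinatorial bookkeeping isolating the one surviving lattice point; the final vanishing of $\nu_\alpha$ is then soft.
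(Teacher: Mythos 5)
Your proof is correct and follows essentially the same route as the paper: read $\nu_\infty$ off the $q$-expansions, then use the lattice-sum form of $E_k|\gamma$ to show it tends to $\pm1$ (up to the constant for $\GEis_k$) as $\Im\tau\to\infty$, whence the defining integral vanishes. You merely spell out details the paper leaves implicit, namely isolating the unique lattice point with vanishing first coordinate and making the tail estimate and the final limit argument explicit.
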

\begin{proof}
The first part of the statement follows directly from~\eqref{eq:holatinf}. For the other cusps, using the series expansion~\eqref{eq:Gk} and~\eqref{eq:GGk} 
for $f=E_k$ and $f=\GEis_k$ respectively one has for $\gamma \in \Gamma$ (with $\gamma\infty\neq \infty$) that $f|\gamma(\tau)\to \pm 1$ as $\Im\tau \to \infty$, whereas $(f|\gamma)'(\tau)\to 0$. Hence, the odd weight Eisenstein series~$E_k$ does not vanish at a cusp and $\GEis_k$ does not vanish at another cusp than at infinity.
\end{proof}

\paragraph{The counting function}
Let $\gamma\in \Gamma$ be given and $f:\HH\to \c$ be holomorphic and well-behaved at the cusps corresponding to $\gamma$. Write $\lambda(\gamma)=\lambda = \gamma^{-1}(\infty)$. We define the \emph{weighted number of zeros of~$f$ in~$\gamma\FD$} to be
\[ N_\lambda(f) \defis \sum_{\tau \in \gamma\FD} w(\tau)\,\nu_\tau(f) \qquad,\]
where $w(\tau)$ is the weight of~$\tau$ and $\nu_\tau(f)$ the order of vanishing of~$f$ at $\tau$ (extended to the cusps as above). Note that $N_\lambda(f)$ is well-defined, i.e., depends on~$\lambda$ rather than $\gamma$ since $f$ is $1$-periodic.  

For modular forms~$f$ of weight~$k$, such as $f=E_k$ with~$k$ \emph{even}, the \emph{valence formula} states that $N_\lambda(f) = \frac{k}{12}$ for all $\lambda\in \mathbb{P}^1(\q)$. Later, we make use of the following lemma.
\begin{lem}\label{lem:-lambda}
If $f:\HH\to \c$ is holomorphic, well-behaved at the cusps and admitting a Fourier expansion $f= \sum_{n\geq 0} a_n \, q^n$ with \emph{real} Fourier coefficients $a_n\mspace{1mu}$, then
\[ N_\lambda(f) \= N_{-\lambda}(f)\]
for all $\lambda \in \q$. 
\end{lem}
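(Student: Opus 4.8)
The plan is to exploit the symmetry $\tau \mapsto -\bar\tau$, which reflects the upper half plane across the imaginary axis and, crucially, sends the fundamental domain translate $\gamma\FD$ to a translate $\gamma'\FD$ with $\lambda(\gamma') = -\lambda(\gamma)$. Concretely, if $f = \sum_{n\geq 0} a_n q^n$ with all $a_n \in \r$, then $\overline{f(-\bar\tau)} = \sum a_n \overline{e^{-2\pi\ii n\bar\tau}} = \sum a_n e^{2\pi\ii n\tau} = f(\tau)$, so $g(\tau) := \overline{f(-\bar\tau)} = f(\tau)$; more usefully, the zeros of $f$ are symmetric under $\tau \mapsto -\bar\tau$, with matching orders of vanishing (complex conjugation is an anti-holomorphic bijection on a neighbourhood of a zero, preserving multiplicity).

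First I would make precise the action of the reflection $R:\tau\mapsto -\bar\tau$ on the tessellation of $\HH^*$ by $\Gamma$-translates of $\FD$. Note $R(\FD) = \FD$ (the standard fundamental domain is symmetric about the imaginary axis), and for $\gamma = \abcd \in \Gamma$ one checks that $R\gamma R = \left(\begin{smallmatrix} a & -b \\ -c & d \end{smallmatrix}\right) \in \Gamma$, so $R(\gamma\FD) = (R\gamma R)(\FD)$ is again a fundamental domain translate, with $\lambda(R\gamma R) = -d/(-c) = -d/c$... wait, I need $\lambda = \gamma^{-1}(\infty)$; for $R\gamma R$ this is $(R\gamma R)^{-1}(\infty) = R\gamma^{-1}R(\infty) = R\gamma^{-1}(\infty) = R(\lambda) = -\bar\lambda = -\lambda$ since $\lambda \in \q$ is real. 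So $R$ carries $\gamma\FD$ bijectively onto the fundamental domain translate indexed by $-\lambda$, and does so compatibly with the weight function $w$ (since $w$ is defined $\Gamma$-invariantly from a reflection-symmetric function on $\FD$, and $R$ preserves $\FD$ and commutes appropriately — the vertices $\rho, \rho+1$ are swapped, the curves $\mathcal C, \mathcal L, \mathcal R$ are permuted, interior goes to interior).

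Next I would combine the two facts: the zero set of $f$ in $\gamma\FD$, counted with weight $w$ and order $\nu$, maps under $R$ bijectively onto the zero set of $f$ in the $(-\lambda)$-translate, preserving both $w$ and $\nu$. The order-preservation at finite points is immediate from anti-holomorphy of $R$ composed with the real-coefficient identity $f\circ R = \overline{f}$ (so $\tau_0$ is a zero of order $m$ iff $R(\tau_0)$ is); at the cusps one uses that $\nu_\alpha$ is defined via the integral $\int \partial_\tau \log(f|\gamma)\,\dd\tau$ and that the slash action transforms compatibly under $\gamma\mapsto R\gamma R$ together with $\tau\mapsto -\bar\tau$, again using reality of the Fourier coefficients so that $\overline{f|\gamma(-\bar\tau)} = f|(R\gamma R)(\tau)$. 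Summing over the zero set then gives $N_\lambda(f) = N_{-\lambda}(f)$.

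The main obstacle I anticipate is bookkeeping the boundary and cusp contributions carefully: a zero on $\partial\FD$ or at a cusp is weighted by $w < 1$, and one must ensure $R$ matches such a zero of $f$ in $\gamma\FD$ with an equally-weighted zero of $f$ in the $(-\lambda)$-translate rather than double-counting or dropping it — in particular one should set up the sum $N_\lambda(f) = \sum_{\tau\in\gamma\FD} w(\tau)\nu_\tau(f)$ using a fixed choice of representatives on the tessellation so that the bijection induced by $R$ is clean. Everything else (that $R$ is an anti-holomorphic involution, that it preserves multiplicities, that $R\gamma R \in \Gamma$) is routine. I expect the proof to be short: establish $f\circ R = \bar f$ from real coefficients, establish the $R$-action on the tessellation and weights, and conclude by transporting the zero-counting sum.
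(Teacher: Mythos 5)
Your proposal is correct and follows essentially the same route as the paper: both use the anti-holomorphic reflection $\tau\mapsto-\overline{\tau}$, the identity $f(-\overline{\tau})=\overline{f(\tau)}$ coming from the real Fourier coefficients, and the observation that this reflection carries $\gamma\FD$ onto $\tilde\gamma\FD$ with $\tilde\gamma=\left(\begin{smallmatrix} a & -b \\ -c & d \end{smallmatrix}\right)$ and $\lambda(\tilde\gamma)=-\lambda(\gamma)$. Your extra care about multiplicities, weights on the boundary, and the cusp contribution is sound bookkeeping that the paper leaves implicit.
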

\begin{proof}
Note that 
\[ f(-\overline\tau) \= \sum_{n\geq 0} a_n \, e^{-2\pi i \overline{\tau}} = \overline{f(\tau)}. \]
Hence, $\tau$ is a zero of~$f$ if and only if $-\overline{\tau}$ is a zero of~$f$. 

Now, let $\gamma =\abcd \in \Gamma$ be given and write $\tilde{\gamma} = \left(\begin{smallmatrix} a & -b \\ -c & d \end{smallmatrix}\right)\in \Gamma$. Suppose $\tau \in \gamma \FD$. Then, $\gamma^{-1}\tau\in \FD$. As $\FD$ is invariant under $\tau\mapsto -\overline{\tau}$, this implies $-\overline{\gamma^{-1}\tau}\in \FD$. Now,
\[
-\overline{\gamma^{-1}\tau}=\frac{-d\overline{\tau}+b}{-c\overline{\tau}+a} = \tilde{\gamma}^{-1}(-\overline{\tau}).
\]
Hence, $-\overline{\tau} \in \tilde{\gamma}\FD$. As $\lambda(\gamma)=-\lambda(\tilde\gamma)$, the statement follows. 
\end{proof}

\paragraph{Variation of the argument}
Let $f:\HH\to \c$ be a holomorphic function with some natural weight~$k$. Then, we define 
$\tilde{f}(\tau) \defis \tau^{k/2} f(\tau).$
Often, $|\tau|=1$ and we write
\begin{align}\label{eq:hat}\hat{f}(\theta) \defis \tilde{f}(e^{\ii\theta}) \= e^{k\ii \theta/2} f( e^{\ii\theta}) \qquad \text{for }\theta \in (0, \pi).\end{align}

Suppose $f$ is also holomorphic at infinity and does not admit any zeros on the boundary $\partial \FD$. Then, by a standard application of Cauchy's theorem, the weighted number of zeros of~$f$ in~$\FD$ is given by
\begin{align}\label{eq:VOA1} N_\infty(f) \= \bigl(\VOA_{\mathcal{L}}+\VOA_{\mathcal{C}}+\VOA_{\mathcal{R}}\bigr)(f) \= \frac{k}{12} \+ \bigl(\VOA_{\mathcal{L}}+\VOA_{\mathcal{R}}\bigr)(f)\+\VOA_{\mathcal{C}}(\hat{f}), \end{align}
where the quantity 
\[ \VOA_\mathcal{S}(f)\defis \Re \frac{1}{2 \pi \ii}  \int_\mathcal{S}\frac{f'(\tau)}{f(\tau)} \dd \tau \]
is the \emph{variation of the argument} of~$f$ along some oriented curve~$\mathcal{S}$, and (by a slight abuse of notation), we write
\[ \VOA_\mathcal{C}(\hat{f})\defis -\frac{1}{2 \pi} \Im\!\int_{
\pi/3}^{2\pi/3}\frac{\hat{f}'(\theta)}{\hat{f}(\theta)} \dd \theta \]
for the variation of the argument of~$\hat{f}$. In the sequel, the following observation is crucial. Writing $f(\tau)=r(\tau)\,e^{\ii\alpha(\tau)}$ in polar coordinates with real-analytic radius $r:\HH\to \r_{\geq 0}$ and real-analytic argument $\alpha:\HH\to \r$, one has
\begin{align}\label{eq:VOA} \VOA_{\mathcal{S}}(f) \= \frac{\alpha(s_1)-\alpha(s_0)}{2\pi},\end{align}
where $s_0$ and $s_1$ are the begin- and endpoints of the curve $\mathcal{S}$ respectively. Indeed, $\VOA_{\mathcal{S}}(f)$ is the variation of the argument of~$f$ along~$\mathcal{S}$. 

If $f$ is holomorphic at infinity, then $f$ is $1$-periodic. Hence, $\VOA_{\mathcal{L}}(f)+\VOA_{\mathcal{R}}(f)=0$.
Note that this equation also holds if $f$ admits zeros on~$\mathcal{L}$ or $\mathcal{R}$ (by regularizing the integrals involved as usual by small semi-circles around the roots of~$f$). Hence, we have proven the following statement.
\begin{lem}\label{lem:VOA}  Let $f:\HH\to \c$ be holomorphic, holomorphic at infinity and without zeros on~$\mathcal{C}$. Then,
\begin{align} N_\infty(f) \= \frac{k}{12} \+ \VOA_{\mathcal{C}}(\hat{f}). \end{align}
\end{lem}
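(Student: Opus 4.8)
The plan is to read off the statement from the contour-integral identity~\eqref{eq:VOA1} by feeding in the $1$-periodicity of~$f$. Recall first why~\eqref{eq:VOA1} holds: it is the argument principle applied on the boundary of the truncated domain $\FD\cap\{\Im\tau\le T\}$, whose positively oriented boundary is $\mathcal{C}$ (run from $\rho$ to $\rho+1$), the vertical pieces of $\mathcal{L}$ and $\mathcal{R}$, and the horizontal segment at height~$T$ run from $\tfrac12+\ii T$ to $-\tfrac12+\ii T$; as $T\to\infty$ the integral over the top segment tends to $-\nu_\infty(f)$ since $f'/f\to 2\pi\ii\,\nu_\infty(f)$ there, and this cancels the contribution $\nu_\infty(f)$ that the cusp makes to $N_\infty(f)=\sum_{\tau\in\FD}w(\tau)\nu_\tau(f)$, which is why~\eqref{eq:VOA1} carries no separate cusp term. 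Writing $f$ on the arc~$\mathcal{C}$ through $\hat f(\theta)=e^{\ii k\theta/2}f(e^{\ii\theta})$ peels off the term $\tfrac{1}{2\pi}\cdot\tfrac{k}{2}\cdot\tfrac{\pi}{3}=\tfrac{k}{12}$ and leaves $\VOA_{\mathcal{C}}(\hat f)$. Since the hypothesis of the lemma forbids zeros on~$\mathcal{C}$ — in particular at the corners $\rho,\rho+1$ — the arc needs no indentations and $\VOA_{\mathcal{C}}(\hat f)$ is a genuine integral; the only place $f$ may vanish on the boundary is $\mathcal{L}\cup\mathcal{R}$, which is handled by the usual small semicircular detours, so~\eqref{eq:VOA1} persists in the present generality in the form $N_\infty(f)=\tfrac{k}{12}+\bigl(\VOA_{\mathcal{L}}+\VOA_{\mathcal{R}}\bigr)(f)+\VOA_{\mathcal{C}}(\hat f)$.

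The one remaining step is to show $\bigl(\VOA_{\mathcal{L}}+\VOA_{\mathcal{R}}\bigr)(f)=0$. As $f$ is holomorphic at infinity it is $1$-periodic, so the substitution $\tau\mapsto\tau+1$ identifies $\int_{\mathcal{R}}f'/f\,\dd\tau$ (with $\mathcal{R}$ run upward) with $\int_{\mathcal{L}}f'/f\,\dd\tau$ with $\mathcal{L}$ run \emph{upward}; since $\mathcal{L}$ occurs in $\partial\FD$ oriented downward, the two variations of the argument cancel. This remains true when $f$ has zeros on the vertical sides: a zero $z_0\in\mathcal{L}$ is matched by a zero of equal order at $z_0+1\in\mathcal{R}$, and the corresponding mirror-image semicircles contribute opposite amounts to $\VOA_{\mathcal{L}}$ and $\VOA_{\mathcal{R}}$, their only net effect being that each such zero counts with weight $\tfrac12\nu_{z_0}(f)$ in $N_\infty(f)$ — exactly as $w$ prescribes on $\partial\FD\setminus\{\rho,\rho+1,\ii\infty\}$. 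Substituting $\bigl(\VOA_{\mathcal{L}}+\VOA_{\mathcal{R}}\bigr)(f)=0$ into the displayed form of~\eqref{eq:VOA1} yields $N_\infty(f)=\tfrac{k}{12}+\VOA_{\mathcal{C}}(\hat f)$.

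I expect no genuine obstacle: once~\eqref{eq:VOA1} is in hand, the lemma follows from periodicity in a single line. The only point deserving attention is the boundary bookkeeping — making sure the indentation radii, orientations and the weight function $w$ fit together so that the contour integral computes $N_\infty(f)$ exactly, including any zeros of $f$ on the vertical sides — but this is the standard machinery behind the classical valence formula and goes through verbatim.
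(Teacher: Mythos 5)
Your proposal is correct and takes essentially the same route as the paper: it is the identity~\eqref{eq:VOA1} combined with the observation that $1$-periodicity forces $\VOA_{\mathcal{L}}(f)+\VOA_{\mathcal{R}}(f)=0$, with paired semicircular indentations handling any zeros on the vertical sides so that they are counted with weight $\tfrac12$. The paper's own proof is a terser version of exactly this argument, so nothing further is needed.
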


\paragraph{Angle estimates}
For $x\in \r$, write $\|x\|:=\min_{n\in \z}|x+2\pi n|$. Often, we make use of the following elementary estimates. \newpage
\begin{lem}\label{lem:angle}
Let $z\in \c$.
\begin{enumerate}[\upshape (i)]\vspace{-5pt}
\item If $z$ is an element of the open ball around $z_0$ with radius $r$, then
\[ 
\bigl\|\arg(z) -\arg(z_0)\bigr\| \,<\, \frac{2r}{|z_0|}\,;
\]
\item If $\Re z>A$ and $|\Im z |<B$ with $A,B\in \r_{>0}\mspace{1mu}$, then
\[ 
\|\arg(z)\| \,<\, \frac{B}{A}.
\]
\end{enumerate}

\end{lem}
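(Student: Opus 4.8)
The plan is to prove both estimates by elementary trigonometry, in each case bounding the relevant angle by comparing it with the sine or tangent of a suitable reference angle.

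For part (i), I would first reduce to the case $z_0 = 1$ (after rotating, since both sides are rotation-invariant) and then to $|z-1| < r/|z_0| =: s$ with $s < 1$; write $z = 1 + w$ with $|w| < s$. The geometric picture is that $z$ lies in the open disk of radius $s$ centered at $1$, and the extreme values of $\arg(z)$ are attained when the ray from the origin is tangent to this disk. For a tangent ray the angle $\varphi$ to the positive real axis satisfies $\sin\varphi = s$, so $\|\arg(z)\| \le \arcsin s$. Then I would invoke the elementary inequality $\arcsin s < \frac{\pi}{2}s$ for $0 < s < 1$ (equivalently $\sin t > \frac{2}{\pi}t$ on $(0,\pi/2)$, which is concavity of $\sin$), but since we only need the weaker bound $\|\arg(z)\| < 2s$ and $\frac{\pi}{2} < 2$, this suffices; alternatively one can argue directly that $|\arg(1+w)| \le \arctan\frac{|w|}{\sqrt{1-|w|^2}}$ when $|w|<1$ is not quite what we want, so the tangent-line/sine picture is cleaner. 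Substituting back $s = r/|z_0|$ gives $\|\arg(z)-\arg(z_0)\| < 2r/|z_0|$.

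For part (ii), the set $\{\Re z > A,\ |\Im z| < B\}$ is a (half-infinite) open rectangle in the right half-plane, and $\arg$ is maximized in absolute value at the ``corners'' $A \pm \ii B$ (more precisely, $|\arg z|$ is increasing as $\Re z$ decreases and as $|\Im z|$ increases, on the relevant region). Hence $\|\arg z\| = |\arg z| \le \arctan(B/A)$, and the elementary inequality $\arctan x < x$ for $x>0$ gives $\|\arg z\| < B/A$. (One also checks that $|\arg z| < \pi/2$ throughout, so $\|\cdot\|$ coincides with $|\cdot|$ and there is no wrap-around issue.)

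I do not anticipate a genuine obstacle here; the only mild care needed is justifying that the supremum of $|\arg|$ over the open regions is attained in the limit at the boundary points described, and confirming that the crude constants ($2$ in (i), and $1$ in (ii) via $\arctan x < x$) are large enough that no sharp estimate is required. Both reduce to monotonicity of elementary functions.
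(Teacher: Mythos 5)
The paper states this lemma without any proof at all (it is introduced only with the words ``the following elementary estimates''), so there is nothing to compare your argument against; your elementary trigonometric verification is precisely what is intended, and part~(ii) is complete as written ($|\arg z|\le\tan|\arg z|=|\Im z|/\Re z<B/A$ for $z$ in the open right half-plane). The one point that needs attention in part~(i) is your phrase ``and then to $|z-1|<r/|z_0|=:s$ with $s<1$'': this is not a reduction but an extra hypothesis. For $s<1$ your tangent-ray bound $\|\arg(z)-\arg(z_0)\|<\arcsin s\le\tfrac{\pi}{2}s<2s$ is correct, and for $s>\pi/2$ the claim is trivial because $\|\cdot\|\le\pi<2s$; but for $1<s\le\pi/2$ the statement as printed is actually \emph{false}: take $z_0=1$, $r=\tfrac{6}{5}$, $z=-\tfrac{1}{10}$, so that $|z-z_0|=\tfrac{11}{10}<r$ while $\|\arg(z)-\arg(z_0)\|=\pi>\tfrac{12}{5}=2r/|z_0|$. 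Since the lemma is only ever invoked in the paper with $r$ exponentially small compared to $|z_0|$, this costs nothing in the applications, but a complete write-up should either add the hypothesis $r\le|z_0|$ (under which your argument goes through verbatim) or explicitly dispose of the range $s>\pi/2$ as trivial and acknowledge that the intermediate range is excluded.
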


\subsection{Zeros of the Eisenstein series~\texorpdfstring{$E_k$}{}}\label{sec:zerosEk}
In this section, we compute the number of zeros of the odd weight Eisenstein series~$E_k$ in the standard fundamental domain~$\FD$, i.e., the number $N_\infty(E_k)$ for $k$ odd. 
Recall that for $\mu=m\tau+n\in \z\tau+\z$, we write $(\mu)=(m,n)$ for the greatest common divisor of~$m$ and~$n$. Also, recall
\[ E_k(\tau) \= \sum_{\substack{\mu \succ 0 \\ (\mu)=1}}\hspace{-3pt}\mbox{}^{\raisebox{3pt}{\scriptsize{e}}}\hspace{3pt} \frac{1}{\mu^k} \= \frac{1}{\zeta(k)} G_k(\tau) \qquad (k\geq 2).\]
From now on, assume that $k\geq 3$ is odd. We write
\begin{align}\label{eq:Ekexp}
{E}_k(\tau) &\= \sum_{\substack{\mu=m\tau+n\\ (m,n) = 1 \\\mu\succ0}} \frac{1}{(m\tau+n)^k}  
\= 1 \+ \frac{1}{\tau^k} \+ \frac{1}{(\tau-1)^k} \+ \frac{1}{(\tau+1)^k} \+ {R}_k(\tau),
\end{align}
where ${R}_k$ contains all terms in the sum~\eqref{eq:Ekexp} for which $m^2+n^2\geq 5$. Then, we have (see~\eqref{eq:hat} for the definition of~$\hat{f}$ for a function~$f$)
\begin{align}\label{eq:Ekhat} \widehat{E}_k(\theta) 
&\= 2 \cos\tfrac{1 }{2}k \theta\+\frac{1}{(2\cos\tfrac{1}{2}\theta)^k} \+ (-1)^{(k+1)/2} \frac{\ii}{(2\sin\tfrac{1}{2}\theta)^k}\+ \widehat{R}_k(\theta).
\end{align}

It was estimated by Rankin--Swinnerton-Dyer that \cite{RSD70}
\begin{align}\label{eq:RSD} |\widehat{R}_k(\theta)| \,\leq \, 4\Bigl(\frac{5}{2}\Bigr)^{-k/2}\+\frac{20\sqrt{2}}{k-3}\Bigl(\frac{9}{2}\Bigr)^{(3-k)/2} \qquad (k>3)\end{align}
In particular, as we will need later, one obtains (we note, for the last time, that $k$ is odd)
\begin{align}\label{eq:Ekrho}
\bigl|E_k(\rho) - \bigl(1-\chi(k)\,\ii\sqrt{3}\bigr)\bigr|  \,\leq \, 
3^{-k/2} + 4\Bigl(\frac{5}{2}\Bigr)^{-k/2}+\frac{20\sqrt{2}}{k-3}\Bigl(\frac{9}{2}\Bigr)^{(3-k)/2} \qquad (k>3)
,
\end{align}
where $\chi:\z \to \{\pm 1\}$ is the primitive Dirichlet character mod~$3$, i.e.,
\[ 
\chi(k) \defis 
\begin{cases} 1 & \textnormal{if } \, k \equiv 1 \bmod 3 \\
-1 & \textnormal{if } \, k \equiv -1 \bmod 3 \\
0 & \text{else.}
\end{cases}
\]

 We proceed in a similar way to estimate $\Im \widehat{R}_k$. \label{page:rkhat} Write
 \begin{align}\label{eq:I} I_k(\tau)  \defis \sum_{\substack{m^2+n^2\geq 5 \\ m>0,n<0 \\ (m,n)=1}} \frac{1}{|m\tau+n|^k} .\end{align}
Observe 
\[\Im \frac{1}{(me^{\ii\theta/2}+ne^{-\ii\theta/2})^k} \+ \Im \frac{1}{(ne^{\ii\theta/2}+me^{-\ii\theta/2})^k}\=0 \qquad (m,n\in \z).\] 
Hence, all terms in~$\Im \widehat{R}_k(\theta)$ with $m,n>0$ cancel in pairs, and we obtain
\begin{align}\label{eq:rkhat}
\Im \widehat{R}_k(\theta) \= \frac{1}{\ii}\sum_{\substack{ m^2+n^2\geq 5 \\ m>0, n<0 \\ (m,n)=1}} \frac{1}{(me^{\ii\theta/2}+ne^{-\ii\theta/2})^k}.
\end{align}
Therefore, 
$|\Im \widehat{R}_k(\theta)| \leq I_k(e^{\ii\theta}).$
\begin{lem}\label{lem:I} For all $k\geq 9$ we have
\begin{align}
I_k(e^{\ii \theta}) &\,\leq\, \frac{4\sqrt{10}}{5^{k/2}}\+\begin{cases} 2\cdot3^{-k/2} & \theta\in [\pi/3,\pi/2] \\ 2\cdot5^{-k/2} & \theta\in [\pi/2,2\pi/3].\end{cases}
\end{align}
\end{lem}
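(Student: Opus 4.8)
The plan is to isolate the two ``dominant'' terms of $I_k(e^{\ii\theta})$ — those with $m^2+n^2=5$ — from the rest, bound the former exactly (this produces the $\theta$-dependent term in the statement) and the remainder by a single $\theta$-independent estimate.

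First, parametrising $\tau=e^{\ii\theta}$ with $\theta\in[\pi/3,2\pi/3]$, one has $|m e^{\ii\theta}+n|^2=m^2+n^2+2mn\cos\theta$. Since $m>0$, $n<0$ and $\cos\theta\le\tfrac12$ on this range, the right-hand side is minimal at $\cos\theta=\tfrac12$, so \[|m e^{\ii\theta}+n|^2\;\ge\; m^2+mn+n^2\;\ge\;\tfrac12\bigl(m^2+n^2\bigr),\] the last step being $\tfrac12(m+n)^2\ge 0$. The coprime pairs $(m,n)$ with $m>0$, $n<0$ and $5\le m^2+n^2<10$ are exactly $(1,-2)$ and $(2,-1)$: the integers $6,7,9$ are not sums of two positive squares, and $8=(\pm2)^2+(\pm2)^2$ admits only non-coprime representations. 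For both of these pairs $|m e^{\ii\theta}+n|^2=5-4\cos\theta$, so I split \[I_k(e^{\ii\theta})\;=\;2\,(5-4\cos\theta)^{-k/2}\;+\;S_k(\theta),\] where $S_k(\theta)$ runs over coprime pairs with $m>0$, $n<0$, $m^2+n^2\ge 10$. Since $5-4\cos\theta\ge 3$ for $\theta\in[\pi/3,\pi/2]$ and $\ge 5$ for $\theta\in[\pi/2,2\pi/3]$, the first summand is at most the case-dependent term in the statement, and it remains to prove $S_k(\theta)\le 4\sqrt{10}\,\cdot 5^{-k/2}$ uniformly in~$\theta$.

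For $S_k(\theta)$ I drop coprimality (this only enlarges the sum) and combine the bound above with $m^2+mn+n^2\ge \tfrac12(m^2+n^2)\ge 5$ and $k\ge 9$ to get, writing $p=-n$, \[S_k(\theta)\;\le\;\sum_{\substack{m,p\ge1\\ m^2+p^2\ge10}}\bigl(m^2-mp+p^2\bigr)^{-k/2}\;\le\;5^{-(k-9)/2}\,2^{9/2}\!\!\sum_{\substack{m,p\ge1\\ m^2+p^2\ge10}}\!\!(m^2+p^2)^{-9/2}\;=\;5^{-k/2}\,10^{9/2}\,C,\] where $C:=\sum_{m,p\ge1,\,m^2+p^2\ge10}(m^2+p^2)^{-9/2}$. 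Finally I bound $C$ by peeling off the four terms with $m^2+p^2\in\{10,13\}$ and comparing the rest to an integral: for integers $m,p\ge1$ with $m^2+p^2\ge 17$ one checks $(m-1)^2+(p-1)^2\ge 8$ (if $a=m-1,b=p-1\ge 0$ had $a^2+b^2\le 7$ then $a+b\le 3$, whence $m^2+p^2=a^2+b^2+2(a+b)+2\le 15$), so the unit squares $[m-1,m]\times[p-1,p]$ are disjoint and contained in $\{x,y\ge 0,\ x^2+y^2\ge 8\}$; since $(x^2+y^2)^{-9/2}$ is decreasing in $x^2+y^2$, that part of $C$ is at most $\int_{x,y\ge0,\,x^2+y^2\ge8}(x^2+y^2)^{-9/2}\dd x\,\dd y=\tfrac{\pi}{14}\,8^{-7/2}$. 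Adding the explicit terms yields \[C\;\le\;2\bigl(10^{-9/2}+13^{-9/2}\bigr)+\tfrac{\pi}{14}\,8^{-7/2}\;<\;4\cdot 10^{-4}\;=\;4\sqrt{10}\,\big/\,10^{9/2},\] i.e. $S_k(\theta)<4\sqrt{10}\,\cdot 5^{-k/2}$, which completes the proof.

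The only mildly delicate point is the final numerical estimate for $C$: one has to be careful both in enumerating the small sums of two positive squares and in choosing the cut-off so that the integral-comparison region is valid (the implication $m^2+p^2\ge 17\Rightarrow(m-1)^2+(p-1)^2\ge 8$). Everything else is a direct computation, and the hypothesis $k\ge 9$ enters only when lowering the exponent from $k/2$ to $9/2$; the resulting bound has comfortable numerical slack even at $k=9$.
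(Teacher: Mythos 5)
Your proof is correct and follows essentially the same route as the paper: both isolate the two terms with $m^2+n^2=5$ (yielding the case-dependent $2\cdot 3^{-k/2}$ or $2\cdot 5^{-k/2}$ via $5-4\cos\theta\ge 3$ resp.\ $5$) and bound the remaining terms uniformly by $4\sqrt{10}\cdot 5^{-k/2}$ using $|me^{\ii\theta}+n|^2\ge\tfrac{1}{2}(m^2+n^2)$. The only difference is bookkeeping in the tail estimate --- the paper counts at most $N^{1/2}$ representations of each $N\ge 10$ and compares the resulting one-dimensional sum to an integral, whereas you compare the two-dimensional lattice sum directly to an integral over a quarter-annulus --- and your numerical constants check out.
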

\begin{proof}
 For integers $m,n$ we have
\begin{equation}
\label{estimate}
{|me^{\ii\theta}+n|^2} \= {m^2+n^2 +2mn \cos\theta} \,\geq\, \tfrac{1}{2}(m^2+n^2)
\end{equation}
for $\theta\in [\pi/3,2\pi/3]$. There are at most $N^{1/2}$ integer solutions of the equation $m^2+n^2=N$ when the sign of~$m$ and $n$ are fixed. Picking out the terms with $m^2+n^2=5$ separately, we obtain
\begin{align}
\label{eq:Ikest}
I_k(e^{\ii \theta}) 
&\,\leq \, \frac{2}{|e^{\ii\theta/2}-2e^{-\ii\theta/2}|^k} \+
  \sum_{N\geq 10} \frac{N^{1/2}}{(\frac{1}{2}N)^{k/2}} 
\,\leq\,  \frac{2}{|e^{\ii\theta/2}-2e^{-\ii\theta/2}|^k}\+\frac{4\sqrt{10}}{5^{k/2}},
\end{align}
where in the last step we estimated the sum by an integral under the assumption that $k\geq 9$. 
For the terms with $m^2+n^2=5$, we have more explicitly
\begin{align}
\frac{1}{|e^{\ii\theta/2}-2e^{-\ii\theta/2}|^k} &\= \frac{1}{(5-4\cos(\theta))^{k/2}} 
\,\leq\, \begin{cases} 3^{-k/2} & \theta\in [\pi/3,\pi/2] \\ 5^{-k/2} & \theta\in [\pi/2,2\pi/3].\end{cases} \qedhere
\end{align}
\end{proof}

\begin{lem}\label{lem:thm1}
For $k\geq 11$ the value $\Im \widehat{E}_k(\theta)$ is non-zero for $\frac{\pi}{3}\leq \theta\leq \frac{2\pi}{3}.$
\end{lem}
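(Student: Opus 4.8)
The plan is to show that the imaginary part of $\widehat{E}_k(\theta)$ never vanishes on $[\pi/3, 2\pi/3]$ by isolating the dominant term. From the expansion~\eqref{eq:Ekhat}, since $2\cos\tfrac{1}{2}k\theta$ and $(2\cos\tfrac12\theta)^{-k}$ are real, we have
\[
\Im \widehat{E}_k(\theta) \= (-1)^{(k+1)/2}\,\frac{1}{(2\sin\tfrac12\theta)^k} \+ \Im\widehat{R}_k(\theta).
\]
So it suffices to prove that the ``main term'' $(2\sin\tfrac12\theta)^{-k}$ strictly dominates $|\Im\widehat{R}_k(\theta)|$ for all $\theta\in[\pi/3,2\pi/3]$ and all $k\geq 11$. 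First I would record that on this interval $\sin\tfrac12\theta$ ranges over $[\sin\tfrac{\pi}{6},\sin\tfrac{\pi}{3}] = [\tfrac12,\tfrac{\sqrt3}{2}]$, so $2\sin\tfrac12\theta\in[1,\sqrt3]$ and hence the main term is at least $3^{-k/2}$ throughout (with equality only at $\theta=2\pi/3$). The key input is $|\Im\widehat{R}_k(\theta)|\leq I_k(e^{\ii\theta})$ together with the bound of \cref{lem:I}.

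The heart of the argument is then a case split matching the two regimes in \cref{lem:I}. For $\theta\in[\pi/2,2\pi/3]$ one has $2\sin\tfrac12\theta\in[\sqrt2,\sqrt3]$, so the main term is $\geq 3^{-k/2}$, while \cref{lem:I} gives $I_k(e^{\ii\theta})\leq 4\sqrt{10}\cdot 5^{-k/2} + 2\cdot 5^{-k/2}$; since $5^{-k/2}$ decays much faster than $3^{-k/2}$, the ratio $(4\sqrt{10}+2)(5^{-k/2})/(3^{-k/2}) = (4\sqrt{10}+2)(3/5)^{k/2}$ is below $1$ for $k$ large, and one checks $k\geq 11$ suffices. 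For $\theta\in[\pi/3,\pi/2]$ the main term $(2\sin\tfrac12\theta)^{-k}$ is $\geq (2\sin\tfrac{\pi}{4})^{-k} = 2^{-k/2}$, i.e.\ comparatively large, whereas \cref{lem:I} gives $I_k(e^{\ii\theta})\leq 4\sqrt{10}\cdot 5^{-k/2} + 2\cdot 3^{-k/2}$; here the troublesome term is $2\cdot 3^{-k/2}$, and we need $2^{-k/2} > 4\sqrt{10}\cdot 5^{-k/2} + 2\cdot 3^{-k/2}$, equivalently $1 > 4\sqrt{10}(2/5)^{k/2} + 2(2/3)^{k/2}$, which again holds for $k\geq 11$ (the term $2(2/3)^{k/2}$ is the slowest to decay and dictates the threshold). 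In both cases I would simply verify the finitely many inequalities numerically at $k=11$ and invoke monotonicity in $k$.

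The main obstacle I anticipate is the boundary case $\theta=\pi/2$ (and more generally making the two sub-intervals overlap cleanly): at $\theta=\pi/2$ the main term equals exactly $2^{-k/2}$ and both bounds in \cref{lem:I} must be reconciled, so I would state the estimate on the closed overlapping intervals $[\pi/3,\pi/2]$ and $[\pi/2,2\pi/3]$ and use whichever branch of \cref{lem:I} applies — there is no genuine discontinuity since \cref{lem:I} is stated on closed intervals. A secondary point to be careful about is that $I_k(e^{\ii\theta})$ bounds $|\Im\widehat{R}_k|$ only after the pairwise cancellation~\eqref{eq:rkhat} of the $m,n>0$ terms, which is already established in the excerpt, so this is automatic. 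One should also note that the hypothesis $k\geq 11$ (rather than $k\geq 9$, the range of \cref{lem:I}) is precisely what is needed to push the inequality $1 > 2(2/3)^{k/2} + 4\sqrt{10}(2/5)^{k/2}$ through; I would remark that $k=9$ narrowly fails, explaining the stated hypothesis, and that the cases $k=3,5,7$ (and $k=9$) will be handled separately elsewhere in the paper by direct computation.
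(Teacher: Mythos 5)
Your proposal is correct and follows essentially the same route as the paper: isolate the main term $(-1)^{(k+1)/2}(2\sin\tfrac12\theta)^{-k}$, bound $|\Im\widehat{R}_k|$ by $I_k$ via \cref{lem:I}, split at $\theta=\pi/2$, and check the resulting inequalities for $k\geq 11$. The only slip is in your closing side remark: at $k=9$ it is the inequality on $[\pi/2,2\pi/3]$, namely $3^{-k/2}>(4\sqrt{10}+2)\,5^{-k/2}$, that narrowly fails (the one on $[\pi/3,\pi/2]$ already holds there), but this does not affect the proof.
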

\begin{proof}
By \cref{lem:I} and~\eqref{eq:Ekhat}, for all odd $k\geq 9$ we have
\begin{align} \Im \widehat{E}_k(\theta) &\= \frac{(-1)^{\frac{k+1}{2}}}{(2\sin \tfrac{1}{2}\theta)^k}\+\Im \widehat{R_k}(\theta) .
\end{align}
Hence,
\begin{align}
\bigl|\Im \widehat{E}_k(\theta)\bigr|&\,\geq \, 
\begin{cases} \displaystyle
{2^{-k/2}}\meno{2}\cdot{3^{-k/2}}\meno 4\sqrt{10}\cdot{5^{-k/2}} & \theta\in [\pi/3,\pi/2] \\ \displaystyle
{3^{-k/2}}\meno (4\sqrt{10}+2)\cdot{5^{-k/2}} & \theta\in [\pi/2,2\pi/3].\end{cases}
\end{align}
For $k \geq 11$ the right-hand side is positive. 
\end{proof}

Let $\round(x)$ be the nearest integer to $x$ (if $x$ is a half-integer, we round up). 
\begin{prop}\label{prop:thm1}
For all odd $k\geq 3$ we have 
\[N_\infty(E_k) \= \round\Bigl(\frac{k}{12}\Bigr) .\]
\end{prop}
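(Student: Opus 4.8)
The plan is to apply \cref{lem:VOA}, which reduces the claim to controlling the variation of the argument $\VOA_{\mathcal C}(\widehat E_k)$, and to exploit a feature special to \emph{odd} weight: whereas Rankin--Swinnerton-Dyer count the sign changes of the real function $2\cos\tfrac12 k\theta$ on $\mathcal C$, for odd $k$ the decisive fact is that the \emph{imaginary} part of $\widehat E_k$ does not vanish at all on the arc $\mathcal C$. Concretely, for $k\geq 11$ formula~\eqref{eq:Ekhat} gives
\[\Im\widehat E_k(\theta)\=\frac{(-1)^{(k+1)/2}}{(2\sin\tfrac12\theta)^k}\+\Im\widehat R_k(\theta),\]
and the proof of \cref{lem:thm1} shows slightly more than its statement: combining $|\Im\widehat R_k(\theta)|\leq I_k(e^{\ii\theta})$ with \cref{lem:I} yields $|\Im\widehat R_k(\theta)|<(2\sin\tfrac12\theta)^{-k}$ for all $\theta\in[\pi/3,2\pi/3]$, so $\Im\widehat E_k(\theta)$ is nowhere zero there and keeps the single sign $(-1)^{(k+1)/2}$. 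In particular $E_k$ has no zero on $\mathcal C$; since $E_k$ is also holomorphic at infinity, \cref{lem:VOA} gives $N_\infty(E_k)=\tfrac{k}{12}+\VOA_{\mathcal C}(\widehat E_k)$.

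Because the continuous curve $\theta\mapsto\widehat E_k(\theta)$, $\theta\in[\pi/3,2\pi/3]$, then lies entirely in one open half-plane $\{\pm\Im w>0\}$, a continuous branch of $\arg\widehat E_k$ stays within an interval of length $\pi$, whence $|\VOA_{\mathcal C}(\widehat E_k)|<\tfrac12$. Next I would verify that $N_\infty(E_k)$ is an integer: the points of $\FD$ of non-integral weight are $\rho,\rho+1$ and the remaining points of $\mathcal C$ (weights $\tfrac16$, resp.\ $\tfrac12$, the latter including $\ii$) together with the interior points of the vertical rays $\mathcal L,\mathcal R$ (weight $\tfrac12$). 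By the above $E_k$ does not vanish on $\mathcal C$, while a zero of order $\nu$ at $-\tfrac12+\ii t\in\mathcal L$ forces, by $1$-periodicity, a zero of the same order $\nu$ at $\tfrac12+\ii t\in\mathcal R$; hence the weight-$\tfrac12$ contributions along $\mathcal L\cup\mathcal R$ occur in matching pairs and sum to integers, and every remaining zero lies in the interior of $\FD$ (there are finitely many, as $E_k\to 1$ at the cusp and $\nu_\infty(E_k)=0$) contributing an integer. Thus $N_\infty(E_k)\in\z$, and since $k/12$ is never a half-integer for odd $k$, the bound $|N_\infty(E_k)-\tfrac{k}{12}|<\tfrac12$ forces $N_\infty(E_k)=\round(k/12)$.

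The estimates above only engage once $k$ is not too small (\cref{lem:I} requires $k\geq 9$, and the conclusion $\Im\widehat E_k\neq 0$ requires $k\geq 11$), so the cases $k\in\{3,5,7,9\}$ would be disposed of by direct verification — e.g.\ a rigorous numerical evaluation of $E_k$ on $\FD$ via the finite main sum in~\eqref{eq:Ekexp} together with an explicit bound on the tail $R_k$ — confirming that $E_k$ has no zero on $\mathcal C$ and that its number of zeros in $\FD$ equals $\round(k/12)$, i.e.\ $0$ for $k=3,5$ and $1$ for $k=7,9$. I expect the handling of these finitely many small weights, and making the threshold in \cref{lem:I} and \cref{lem:thm1} fully explicit, to be the only genuine obstacle; the conceptual point — replacing the ``count the sign changes of $2\cos\tfrac12 k\theta$'' idea by ``$\Im\widehat E_k$ has constant sign, hence $\widehat E_k$ winds by less than half a turn along $\mathcal C$'' — is what makes the odd-weight count work, and work cleanly.
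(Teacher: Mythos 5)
Your proposal is correct and follows essentially the same route as the paper: invoke \cref{lem:thm1} to get that $\Im\widehat E_k$ has constant sign on $\mathcal C$, deduce $|\VOA_{\mathcal C}(\widehat E_k)|<\tfrac12$ and the integrality of $N_\infty(E_k)$ (no zeros on $\mathcal C$, and zeros on $\mathcal L$ pairing with zeros on $\mathcal R$ by $1$-periodicity), then conclude via \cref{lem:VOA} since $k/12$ is never a half-integer for odd $k$. The only cosmetic difference is in the small weights $k\in\{3,5,7,9\}$, where the paper sharpens the error estimate of \cref{lem:I} while you propose a direct rigorous verification; both are routine.
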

\begin{proof}
As the integral $\VOA_{\mathcal{C}}(\widehat{E}_k)$ measures the variation of the argument of~$\widehat{E}_k$ on~$\mathcal{C}$ (see~\eqref{eq:VOA}), the previous lemma implies $|\VOA_{\mathcal{C}}(\widehat{E}_k)|< \frac{1}{2}$. Moreover, it follows from that lemma that $N_\infty(E_k)$ takes an integer value: $E_k$ does not admit a zero on~$\mathcal{C}$, where zeros are weighted by weight $\frac{1}{2}$ or $\frac{1}{6}$ rather than weight~$1$---note that if $E_k$ has a zero $\tau_0$ on~$\mathcal{L}$, counted with weight $\frac{1}{2}$, also $\tau_0+1$ is a zero of~$E_k$ on~$\mathcal{R}$ as $E_k$ is $1$-periodic. 

Now, by \cref{lem:VOA} we obtain $N_\infty(E_k)=\round(\frac{k}{12})$ if $k\geq 11$. Using a more careful error estimate in \cref{lem:I}, one shows that the same holds for $k\in \{3,5,7,9\}$.
\end{proof}

\subsection{Refined estimates for the location of the zeros}
In this section we prove \cref{thm:2}, i.e., we show that all zeros of~$E_k$ in~$\FD$ tend to the unit circle as $k\to \infty$. First of all, we extend the bound~\eqref{eq:RSD} to all $|\tau|\geq 1$:
\begin{lem}\label{lem:Rktau} For $k\geq 11$ and $\tau \in \FD$, we have
\[ |R_k(\tau)|\,\leq \,\frac{6\sqrt{5}}{(5/2)^{k/2}}. \]
\end{lem}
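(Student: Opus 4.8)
The plan is to bound $|R_k(\tau)|$ uniformly over $\tau\in\FD$ by the same kind of lattice-sum estimate that Rankin--Swinnerton-Dyer used on the circle $|\tau|=1$, replacing the circle-specific inequality $|me^{\ii\theta}+n|^2\geq\frac12(m^2+n^2)$ by a version valid on the whole fundamental domain. Recall $R_k(\tau)=\sum 1/(m\tau+n)^k$ where the sum runs over coprime pairs with $m^2+n^2\geq 5$, so
\[
|R_k(\tau)|\,\leq\,\sum_{\substack{m^2+n^2\geq 5\\(m,n)=1}}\frac{1}{|m\tau+n|^k}.
\]
First I would establish that for $\tau\in\FD$ and any $(m,n)\in\z^2$ one has $|m\tau+n|^2\geq\frac{2}{5}(m^2+n^2)$ — or some comparable constant $\kappa$ with $\kappa^{-1}$ close to $5/2$. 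This is the analogue of \eqref{estimate}. Indeed, writing $\tau=x+\ii y$ with $|x|\leq\frac12$ and $y\geq\frac12\sqrt3$, we get $|m\tau+n|^2=(mx+n)^2+m^2y^2\geq m^2x^2+2mnx+n^2+\tfrac34 m^2-m^2x^2 = m^2+n^2+2mnx-m^2+\tfrac34 m^2$; more carefully, $|m\tau+n|^2 = m^2|\tau|^2+2mn x+n^2\geq m^2+n^2-|mn|\geq \tfrac12(m^2+n^2)$ using $|\tau|\geq 1$ and $|x|\leq\frac12$, exactly as on the circle. So in fact the same constant $\tfrac12$ works on all of $\FD$, and no new geometric input is needed — this is the step I'd expect to be quickest.

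Next I would split off the terms with $m^2+n^2=5$ (there are eight such coprime pairs, but only those with $m\neq 0$ and, in the relevant combinations, a definite sign pattern contribute the dominant size) and estimate the tail $\sum_{N\geq 10} N^{1/2}/(\tfrac12 N)^{k/2}$ by comparison with an integral, valid for $k\geq 11$ (indeed $k\geq 9$ suffices as in \cref{lem:I}), giving a contribution at most $\frac{4\sqrt{10}}{5^{k/2}}=\frac{4\sqrt{10}}{(5/2)^{k/2}2^{k/2}}$ — actually it is cleaner to keep it as $\sum_{N\ge 10}N^{1/2}(2/N)^{k/2}=2^{k/2}\sum_{N\ge 10}N^{(1-k)/2}$ and bound this by an integral. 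For the eight terms with $m^2+n^2=5$ I would use $|m\tau+n|^2\geq\tfrac12\cdot 5=\tfrac52$, so each contributes at most $(5/2)^{-k/2}$ and the eight together at most $8(5/2)^{-k/2}$. Combining, $|R_k(\tau)|\le 8(5/2)^{-k/2}+(\text{tail})$, and I'd then check numerically for $k=11$ (the worst case, since every term decays in $k$) that the total is at most $6\sqrt5\,(5/2)^{-k/2}\approx 13.4\,(5/2)^{-k/2}$, which leaves comfortable room; monotonicity in $k$ of the ratio then gives it for all $k\geq 11$.

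The only real subtlety is bookkeeping the constant: one must be slightly careful that the count "at most $N^{1/2}$ coprime representations of $N$ as $m^2+n^2$ with fixed signs" is applied consistently, and that the $N=5$ layer is genuinely removed from the integral-estimated tail so the tail sum starts at $N=10$ (the next value $\geq 5$ with the right parity/representability is $N=8$, giving $m^2+n^2=8$, i.e. $(\pm2,\pm2)$, which is not coprime, so indeed the next coprime layer with $m,n\neq 0$ is $N=10$; but $N=9$ from $(\pm3,0),(0,\pm3)$ is also non-coprime to... wait, $(3,0)$ has $\gcd 3$, fine — so starting the tail at $N=10$ after pulling out $N=5$ is safe, though to be fully rigorous I would just start the integral comparison at $N=6$ and absorb the few extra small terms into the constant). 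Since $6\sqrt5\approx13.4$ is far larger than the sharp constant one actually gets, all of this slack is harmless, and the lemma follows.
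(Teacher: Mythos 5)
Your proposal follows essentially the same route as the paper's proof: both rest on the observation that the Rankin--Swinnerton-Dyer inequality $|m\tau+n|^2\geq\tfrac12(m^2+n^2)$ persists on all of $\FD$ (via $|\tau|\geq1$ and $|\mathrm{Re}\,\tau|\leq\tfrac12$), followed by counting $O(N^{1/2})$ coprime lattice points per layer $m^2+n^2=N$ and comparing the resulting sum with an integral for $k\geq 11$. The only caveat is bookkeeping: the paper keeps the restriction $m>0$ (so at most $2N^{1/2}$ points per layer) and sums directly over $N\geq 5$, which yields $6\sqrt5\,(5/2)^{-k/2}$ without needing your separate treatment of the $N=5$ layer, whereas your version summing over all sign patterns cuts the margin close at $k=11$ and genuinely needs the tail to start at $N=10$ (which you correctly justify) rather than the ``safer'' $N=6$ fallback.
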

\begin{proof}
For integers $m,n$ we estimate
\begin{equation}
\label{estimate}
{|m\tau+n|^2} \= {m^2r^2+n^2 +2mnr\cos\theta} \,\geq\, \tfrac{1}{2}(m^2+n^2),
\end{equation}
where $\tau=re^{\ii \theta}$ with $r\geq 1$ and $\theta\in [\pi/3,2\pi/3]$. If, $N\geq 5$, there are at most $2N^{1/2}$ coprime integer solutions of the equation $m^2+n^2=N$ with $m>0$. Therefore, 
\begin{equation}
\label{eq:rkest}
|R_k(\tau)|\, \leq \,\!\!\!\sum_{\substack{m>0 \\ m^2 + n^2 \geq 5 \\ (m,n) = 1 }}\frac{1}{|m\tau+n|^k} \,\leq\,  \sum_{N\geq 5} \frac{2N^{1/2}}{(\frac{1}{2}N)^{k/2}}
\,\leq\, \frac{6\sqrt{5}}{(5/2)^{k/2}},
\end{equation}
where in the last step we estimated the sum by an integral under the assumption that $k\geq 11$. 
\end{proof}
In \cref{lem:thm1} we showed $|\Im \widehat{E}_k|\neq 0$. Now, we prove a similar result for a slightly different function---we consider $\Re E_k$ rather than $\Im \widehat{E}_k\mspace{1mu}$. 
\begin{lem}
\label{lem:12withr}
For $k \geq 11, \theta \in (\pi/3, 2 \pi /3)$ and $r\geq 4^{\frac{1}{k}}$ we have
\[ \Re E_k(r e^{\ii \theta}) \,\neq\, 0. \] 
\end{lem}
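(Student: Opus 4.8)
The plan is to mimic the structure of \cref{lem:thm1}, but now working with the real part of $E_k$ on the circle of radius $r\geq 4^{1/k}$ instead of the imaginary part on the unit circle. Write $\tau = re^{\ii\theta}$ and expand $E_k$ as in~\eqref{eq:Ekexp}:
\[
E_k(re^{\ii\theta}) \= 1 \+ \frac{1}{\tau^k} \+ \frac{1}{(\tau-1)^k} \+ \frac{1}{(\tau+1)^k} \+ R_k(\tau).
\]
Taking real parts, the term $1$ contributes $1$, and I would argue that each of the three terms $\tau^{-k}, (\tau\pm 1)^{-k}$ is small: indeed $|\tau|=r\geq 4^{1/k}$ forces $|\tau^{-k}|\leq \tfrac14$, and since $\theta\in(\pi/3,2\pi/3)$ one has $|\tau\pm 1|^2 = r^2 + 1 \pm 2r\cos\theta \geq r^2 - r + 1 \geq \tfrac34$ (minimum at $r=1$, $\theta=\pi/3$ or $2\pi/3$), so $|(\tau\pm1)^{-k}|\leq (3/4)^{-k/2}$, which is exponentially small. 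Combined with \cref{lem:Rktau} giving $|R_k(\tau)|\leq 6\sqrt5\,(5/2)^{-k/2}$, the sum of all the non-constant contributions to $\Re E_k$ is at most $\tfrac14 + 3(3/4)^{-k/2} + 6\sqrt5\,(5/2)^{-k/2}$.

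The main obstacle is that this crude bound is \emph{not} less than $1$: the three terms $(\tau\pm1)^{-k}$ and $\tau^{-k}$ are not individually small enough when $r$ is exactly $4^{1/k}$, since $(3/4)^{-k/2}$ blows up. So the real content is to be more careful: rather than bounding $|\tau^{-k} + (\tau-1)^{-k} + (\tau+1)^{-k}|$ term by term, I would bound its real part. The key point is that for $\theta\in(\pi/3,2\pi/3)$ and $r\geq 1$, each of $\tau$, $\tau-1$, $\tau+1$ has argument bounded away from the directions that would make $(\cdot)^{-k}$ have real part close to $-1$ in a reinforcing way; more usefully, the three quantities $|\tau|, |\tau-1|, |\tau+1|$ cannot all be small simultaneously — in fact $|\tau-1|\cdot|\tau+1| = |\tau^2-1| \geq |\tau|^2 - 1 = r^2-1$, and at least one of $|\tau\pm1|$ is $\geq \sqrt{r^2+1} > r \geq 4^{1/k}$. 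So one of the two terms $(\tau\pm1)^{-k}$ is already bounded by $\tfrac14$ in absolute value, and for the remaining one (say $|\tau-1|$ is the small one, which happens when $\Re\tau>0$, i.e. $\theta<\pi/2$), I would use that $\tau - 1$ has positive imaginary part and, crucially, that $\arg(\tau-1)$ is controlled. Actually the cleanest route: when $\theta \le \pi/2$, write $\Re(\tau-1)^{-k} = |\tau-1|^{-k}\cos(k\arg(\tau-1))$; since $\arg(\tau-1)\in(0,\pi)$ we only get a sign, not a size, improvement, so instead I should note that $|\tau-1|\geq|\Im\tau| = r\sin\theta \geq r\sin(\pi/3) = r\sqrt3/2$, giving $|(\tau-1)^{-k}|\leq (r\sqrt3/2)^{-k}\leq (\sqrt3/2)^{-k}\,4^{-1} = \tfrac14(4/3)^{k/2}$ — still growing.

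So the genuinely careful estimate must exploit cancellation between $\tau^{-k}$ and the two other terms, or simply restrict attention to the regime where the lemma is needed and handle $r$ close to $1$ separately from $r$ large. I expect the intended proof splits $(\pi/3,2\pi/3)$ or the range of $r$ and in each piece shows $\Re E_k > 1 - (\text{small}) > 0$ using that the relevant denominators satisfy $|m\tau+n|\geq$ something strictly bigger than $1$: specifically, for $r\geq 4^{1/k}$ all three of $|\tau|^k, |\tau-1|^k\cdot|\tau+1|^k$ give products $\geq 4$, and one shows $\Re\big(\tau^{-k}+(\tau-1)^{-k}+(\tau+1)^{-k}\big) > -1$ by a direct argument: the worst case has two of the terms with $|\cdot|\leq \tfrac14$ (those with $|\tau\pm1|\geq r$) contributing $\geq -\tfrac12$, and the single possibly-large term, say $(\tau-1)^{-k}$ with $0 < \arg(\tau-1) < \pi/2$ when $\theta\in(\pi/3,\pi/2)$ — here $k\arg(\tau-1)$ can wrap around, but one checks $\Re(\tau-1)^{-k} = \cos(k\arg(\tau-1))/|\tau-1|^k \geq -1/|\tau-1|^k$ and combines with $|\tau-1|^2|\tau+1|^2\geq(r^2-1)^2$... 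I would ultimately present the argument by the bound $\Re E_k(re^{\ii\theta}) \geq 1 - r^{-k} - |\tau-1|^{-k} - |\tau+1|^{-k} - |R_k(\tau)|$ and verify, using $|\tau+1|\geq|\tau-1|\geq 1$ when $\theta\leq\pi/2$ (resp. the symmetric statement for $\theta\geq\pi/2$, via \cref{lem:-lambda}-type symmetry $\theta\mapsto\pi-\theta$), together with $|\tau-1|\cdot|\tau+1|\geq r^2-1\geq 4^{2/k}-1$ and $r^{-k}\leq\tfrac14$, that the right-hand side is positive for $k\geq 11$; the key inequality to check is that $4^{-1} + |\tau-1|^{-k} + |\tau+1|^{-k}$ stays below $1 - 6\sqrt5(5/2)^{-k/2}$, which reduces to a one-variable estimate on $|\tau-1|\in[1,\sqrt2\,]$ that I would dispatch by convexity/monotonicity in $k$. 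The one subtle point deserving care is handling the sub-regime where $|\tau-1|$ is close to its minimum $1$ (attained near $\tau = \rho+1$, i.e. $\theta$ near $\pi/3$, $r$ near $1$) — but there $r\geq 4^{1/k}$ strictly exceeds $1$, so $|\tau-1|\geq|\tau|-1$ is not useful, yet $|\tau-1|^2 = r^2-2r\cos\theta+1$ with $\cos\theta\leq\tfrac12$ gives $|\tau-1|^2\geq r^2-r+1\geq 1$ with equality only at $r=1$, so for $r\geq 4^{1/k}$ we gain a little, enough for large $k$; combined with $|\tau+1|^2\geq r^2+1\geq 2$ this closes the estimate.
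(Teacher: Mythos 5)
Your final strategy coincides with the paper's: isolate the constant term, bound $\Re\tau^{-k}\geq -r^{-k}\geq -\tfrac14$, control the two shifted terms through $|\tau\pm1|^2=r^2+1\pm2r\cos\theta$, and absorb the tail via \cref{lem:Rktau}. However, as written the argument does not close at the decisive step. You reduce matters to showing that $\tfrac14+|\tau-1|^{-k}+|\tau+1|^{-k}$ stays below $1-\tfrac{6\sqrt5}{(5/2)^{k/2}}$ and propose to treat this as ``a one-variable estimate on $|\tau-1|\in[1,\sqrt2\,]$ dispatched by convexity/monotonicity in $k$''; that estimate is simply false at the endpoint $|\tau-1|=1$, where the left-hand side already exceeds $1$. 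You notice this and fall back on ``$|\tau-1|^2\geq r^2-r+1\geq1$ with equality only at $r=1$, so for $r\geq4^{1/k}$ we gain a little, enough for large $k$'' --- but this is an assertion, not a proof, and in particular it does not cover the stated range $k\geq11$. The margin here is genuinely tight: with $r=4^{1/k}$ one has $(r^2-r+1)^{-k/2}\to\tfrac12$ as $k\to\infty$, so ``a little'' must be quantified or the budget $\tfrac14+\tfrac12+(\text{small})<1$ cannot be certified.

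The missing line is the elementary inequality $r^2-r+1\geq r$, i.e.\ $(r-1)^2\geq0$. Since $|\cos\theta|\leq\tfrac12$ on $[\pi/3,2\pi/3]$, both $|\tau+1|^2$ and $|\tau-1|^2$ are at least $r^2-r+1\geq r$, so the ``dangerous'' shifted term is bounded by $r^{-k/2}\leq4^{-1/2}=\tfrac12$, while the other is at least $r^2+r+1\geq3$ in the worst case $\cos\theta=\pm\tfrac12$ (to which one reduces because $u\mapsto(r^2+1+u)^{-k/2}+(r^2+1-u)^{-k/2}$ is even and convex, hence maximized at the endpoints of $u=2r\cos\theta\in[-r,r]$). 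This yields $\Re E_k(re^{\ii\theta})\geq1-\tfrac14-\tfrac12-3^{-k/2}-\tfrac{6\sqrt5}{(5/2)^{k/2}}>0$ for $k\geq11$, which is exactly how the paper concludes; it also renders your symmetry reduction to $\theta\leq\pi/2$ unnecessary. With that one inequality inserted, your argument is the paper's proof; without it, the proof is incomplete precisely where the threshold $4^{1/k}$ enters.
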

\begin{proof}
Let $\tau=re^{\ii \theta}$. We have
\[ 1+\Re\frac{1}{\tau^k}
\,\geq\, 1-\frac{1}{r^k}\]
and estimate
\begin{align}\label{eq:2ndtermes}
\label{Estm2}
\left| \Re \, \frac{1}{(\tau+1)^k} \+ \Re \, \frac{1}{(\tau-1)^k}  \right| &\,\leq\, \frac{1}{|\tau+1|^k}\+\frac{1}{|\tau-1|^k} \\
&\= \frac{1}{(r^2+1+2r\cos(\theta))^{k/2}}\+\frac{1}{(r^2+1-2r\cos(\theta))^{k/2}}.
\end{align}
Given $r$, the right-hand side attains its minimum as a function of~$\theta$ on the boundary of $(\pi/3, 2\pi/3)$. Hence, we find
\begin{align}\left| 1+\Re\frac{1}{\tau^k}+\Re\frac{1}{(\tau+1)^{k}} + \Re\frac{1}{(\tau-1)^{k}}  \right| &\geq 1-\frac{1}{r^k}-\frac{1}{(r^2+1+r)^{k/2}}-\frac{1}{(r^2+1-r)^{k/2}} \\
&\geq 1\meno\frac{1}{r^{k}}\meno\frac{1}{3^{k/2}}\meno\frac{1}{r^{k/2}}.
\end{align}
If we assume that
$r \geq 4^{\frac{1}{k}}$ and $k\geq 11$, we find that the right-hand side is at least $ \frac{6\sqrt{5}}{(5/2)^{k/2}},$ so that by \cref{lem:Rktau} we conclude that $\Re E_k(r e^{\ii \theta})\neq 0$. 
\end{proof}
\begin{proof}[Proof of~\cref{thm:2}]
For $k\geq 11$, the statement follows directly from the previous lemma. By a numerical approximation of the root of~$E_k$ in~$\FD$ for $k\in \{7,9,11\}$, we find that the result holds for all $k\geq 3$.
\end{proof}

\begin{remark}
The roots $z$ of~$E_k$ in~$\FD$ satisfy $1<|z|<C^{1/k}$ for $C=4$. Though the constant~$C$ may certainly be improved, we do expect that an upper bound for the radius of the form $C^{1/k}$ is best possible. Note that Dimitrov's theorem (the former Schinzel--Zassenhaus conjecture) provides a lower bound of the same shape $C^{1/k}$ for the \emph{house} of polynomials of degree~$k$, where the house is the maximum of the absolute values of all its roots \cite{Dim19}. Now, the zeros of
\[1+\frac{1}{\tau^k}+\frac{1}{(\tau+1)^k}+\frac{1}{(\tau-1)^k}\]
within~$\FD$ provide a good approximation of the zeros of~$E_k\mspace{1mu}$, which provides some heuristic evidence as to why we believe that a radius of the form $C^{1/k}$ is best possible. 
\end{remark}

\subsection{Upper bound on the number of zeros of~\texorpdfstring{$\GEis_k$}{the Eisenstein series}}
Recall
\[\GEis_k(\tau) \= \frac{(k-1)!}{(-2\pi\ii)^k}\sum_{\mu\gg 0} \frac{1}{\mu^k} \= \frac{(k-1)!}{(-2\pi\ii)^k \zeta(k)} \sum_{\substack{\mu\gg 0 \\ (\mu)=1}} \frac{1}{\mu^k}.\]
Write 
\[H_k(\theta) \defis \ii^{k+1} \Im \, \widehat{E}_k(\theta).\]
Then,
\begin{align} \Im (-1)^{(k+1)/2}\frac{(-2\pi\ii)^k \zeta(k)}{(k-1)!}\widehat{\GEis}_k(\theta) &\= (-1)^{(k+1)/2}\,\Im \, \widehat{E}_k(\theta) \meno (-1)^{(k+1)/2}\,\Im \frac{1}{(0+ 1 \cdot e^{-i\theta/2} )^k} \\
&\=  H_k(\theta) \+ (-1)^{(k-1)/2} \sin(k \theta/2). \label{eq:convexandsine}
\end{align}
To count the number of zeros of~$\GEis_k$ in~$\FD$, we first study this function~$H_k\mspace{1mu}$. 

\begin{lem}
\label{lem:convex}
$H_k$ is a strictly convex positive-valued function on $[\pi/3,2\pi/3]$ for $k\geq 17$.
\end{lem}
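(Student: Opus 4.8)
The plan is to show that $H_k(\theta) = \ii^{k+1}\Im\widehat{E}_k(\theta)$ has a dominant explicit main term that is strictly convex and positive, with all remaining contributions small enough (in absolute value, together with their first and second derivatives) to preserve strict convexity and positivity on $[\pi/3,2\pi/3]$. From \eqref{eq:Ekhat} and the computation preceding \cref{lem:thm1}, the imaginary part of $\widehat{E}_k$ has no contribution from the term $2\cos\tfrac{1}{2}k\theta$ nor from $(2\cos\tfrac12\theta)^{-k}$ (these are real), so
\[
H_k(\theta) \= \frac{1}{(2\sin\tfrac12\theta)^k} \+ \ii^{k+1}\,\Im\widehat{R}_k(\theta),
\]
using that $\ii^{k+1}(-1)^{(k+1)/2}\ii = 1$ for odd $k$. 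Thus the main term is $g_k(\theta) := (2\sin\tfrac12\theta)^{-k}$, and it remains to estimate the error term $\ii^{k+1}\Im\widehat{R}_k(\theta)$ and its derivatives.

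First I would verify that $g_k$ is strictly convex and positive on $[\pi/3,2\pi/3]$ with a quantitative lower bound on $g_k''$. Writing $g_k = \exp(-k\log(2\sin\tfrac12\theta))$, a direct computation gives $g_k'' = g_k\bigl((k\,u')^2 - k\,u''\bigr)$ where $u(\theta)=\log(2\sin\tfrac12\theta)$, so $u'=\tfrac12\cotan\tfrac12\theta$ and $u''=-\tfrac14\csc^2\tfrac12\theta<0$; hence $g_k''\geq k\,g_k\cdot(-u'') = \tfrac{k}{4}g_k\csc^2\tfrac12\theta \geq \tfrac{k}{4}g_k$ throughout the interval, and $g_k\geq g_k(\pi/2)=2^{-k/2}$ at the worst point $\theta=\pi/2$ (where $\sin\tfrac12\theta$ is maximal) — actually one wants the minimum of $g_k$, attained at the endpoints, $g_k(\pi/3)=g_k(2\pi/3)=3^{-k/2}$. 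So $g_k(\theta)\geq 3^{-k/2}$ and $g_k''(\theta)\geq \tfrac{k}{4}\cdot 3^{-k/2}$ on the interval.

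Next I would bound $\Im\widehat{R}_k$ and its first two $\theta$-derivatives. From \eqref{eq:rkhat}, $\Im\widehat{R}_k(\theta)$ is a sum over coprime $(m,n)$ with $m>0$, $n<0$, $m^2+n^2\geq 5$ of $\tfrac{1}{\ii}(me^{\ii\theta/2}+ne^{-\ii\theta/2})^{-k}$. Each term is $\tfrac{1}{\ii}\phi(\theta)^{-k}$ with $\phi(\theta)=me^{\ii\theta/2}+ne^{-\ii\theta/2}$, $|\phi(\theta)|^2=m^2+n^2+2mn\cos\theta\geq\tfrac12(m^2+n^2)$ on $[\pi/3,2\pi/3]$ by \eqref{estimate}; differentiating, $\tfrac{d}{d\theta}\phi^{-k} = -k\phi^{-k-1}\phi'$ with $|\phi'|=\tfrac12|me^{\ii\theta/2}-ne^{-\ii\theta/2}|\leq\tfrac12(m+|n|)\leq\tfrac{1}{\sqrt2}(m^2+n^2)^{1/2}$, and similarly for the second derivative one gains a factor $O(k^2)$ times $|\phi|^{-k-2}\cdot(m^2+n^2)$. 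So after the same counting argument as in \cref{lem:I} (at most $2N^{1/2}$ coprime pairs with fixed signs and $m^2+n^2=N$, the $N=5$ term isolated, the tail $\sum_{N\geq 10}$ estimated by an integral for $k$ large), one gets bounds of the shape $|\Im\widehat{R}_k|, |(\Im\widehat{R}_k)'|, |(\Im\widehat{R}_k)''|$ all $\leq$ (polynomial in $k$) $\cdot\,(5/2)^{-k/2}$ plus the isolated $N=5$ contribution $\lesssim k^2\cdot 3^{-k/2}$ — note the isolated term on $[\pi/3,2\pi/3]$ has $|\phi|^2=5-4\cos\theta\geq 3$. Then $H_k''\geq g_k'' - |(\Im\widehat{R}_k)''| \geq \tfrac{k}{4}3^{-k/2} - Ck^2(5/2)^{-k/2} - C'k^2\cdot 3^{-k/2}\cdot(\text{small})$: the $g_k''$ term beats the $(5/2)^{-k/2}$ error since $3^{-k/2}/(5/2)^{-k/2}=(5/6)^{-k/2}\to\infty$, and one checks the isolated-$N=5$ second-derivative contribution carries an extra decaying factor (from $5-4\cos\theta\geq 3$ with strict inequality off the endpoints, or by direct estimate it is genuinely smaller), so $H_k''>0$ for $k\geq 17$. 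The same comparison with $g_k\geq 3^{-k/2}$ gives positivity $H_k>0$.

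The main obstacle is the bookkeeping for the term $m^2+n^2=5$, i.e. $(m,n)=(1,-2)$ and $(2,-1)$: here $|\phi(\theta)|^2 = 5-4\cos\theta$ equals exactly $3$ at the endpoints $\theta=\pi/3,2\pi/3$, so this term decays only like $3^{-k/2}$ — the same rate as the lower bound for $g_k$ — and its second derivative, naively bounded, carries a factor $k^2$ that the main term $\tfrac{k}{4}g_k''$ does not dominate. To close the argument I would need to be careful here: either show that on $[\pi/3,2\pi/3]$ this isolated term's second derivative is in fact bounded by $C k^2 (5-4\cos\theta)^{-k/2}$ with $(5-4\cos\theta)\geq 3$ only at the two endpoints and $\geq 3+\delta$ on a slightly shrunk interval (handling the endpoints by a separate direct argument, e.g. using that $g_k''$ near the endpoints is comparably large), or — cleaner — keep the $(1,-2),(2,-1)$ terms together with the main term $g_k$ as an enlarged "explicit part" $(2\sin\tfrac12\theta)^{-k} + 2\,\mathrm{(real/imag combination of)}\,(me^{\ii\theta/2}+ne^{-\ii\theta/2})^{-k}$ and prove convexity of that explicit part directly, leaving only the genuinely smaller $(5/2)^{-k/2}$-tail as the error. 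I expect the latter approach is what makes the threshold $k\geq 17$ come out, and the bulk of the real work is the (routine but delicate) verification that this enlarged explicit part is strictly convex and positive.
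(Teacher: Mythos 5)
Your decomposition is the same as the paper's: isolate the $(m,n)=(1,-1)$ term $g_k(\theta)=(2\sin\tfrac12\theta)^{-k}$ of $H_k$, differentiate twice, and show its second derivative dominates that of the remainder over $m>0$, $n<0$, $m^2+n^2\geq 5$. Your formula for $g_k''$ is correct and agrees with the paper's main term. But the error comparison contains a concrete mistake: you assert that $\tfrac{k}{4}\,3^{-k/2}$ beats a $(5/2)^{-k/2}$-sized tail because ``$3^{-k/2}/(5/2)^{-k/2}=(5/6)^{-k/2}\to\infty$'', whereas in fact $3^{-k/2}/(5/2)^{-k/2}=(5/6)^{k/2}\to 0$ since $3>5/2$. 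A tail of genuine size $(5/2)^{-k/2}$ would therefore swamp your uniform lower bound for $g_k''$. What rescues the argument --- and what the paper uses --- is that once the $N=5$ terms are removed, the remaining tail starts at $N=10$ and is $O(k^2\,5^{-k/2})$, and $5^{-k/2}$ is indeed negligible against $3^{-k/2}$; the $(5/2)^{-k/2}$ bound of \cref{lem:Rktau} includes $N=5$ and cannot be reused here.

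Second, you correctly flag the isolated $(1,-2),(2,-1)$ terms as the real obstacle but leave the argument open, proposing either to shrink the interval or to absorb these terms into an enlarged explicit part. Neither is needed. The paper's resolution is simply to split $[\pi/3,2\pi/3]$ at $\theta=\pi/2$, exactly as in \cref{lem:I}: on $[\pi/3,\pi/2]$ one has $2\sin\tfrac12\theta\leq\sqrt2$, so the main term of $H_k''$ is at least of order $k^2\,2^{-k/2}$, while $5-4\cos\theta\geq 3$ makes the $N=5$ contribution $O(k^2\,3^{-k/2})$; on $[\pi/2,2\pi/3]$ the main term is at least of order $k^2\,3^{-k/2}$, while $5-4\cos\theta\geq 5$ makes the $N=5$ contribution $O(k^2\,5^{-k/2})$. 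On each half the main term wins by an exponential factor, with no endpoint subtleties, and the threshold $k\geq 17$ drops out of these explicit constants. The same splitting is also what you need for positivity: your uniform comparison ``$g_k\geq 3^{-k/2}$ versus the error'' does not close on $[\pi/3,\pi/2]$, where the $N=5$ part of the error is itself of size $3^{-k/2}$; there you must use $g_k\geq 2^{-k/2}$, which is how \cref{lem:thm1} is actually proved.
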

\begin{proof}
Note that the proof of \cref{lem:thm1} implies that $H_k$ is positive-valued. Now, a twice-differentiable function is convex if and only if the second derivative is non-negative.
By a similar argument as we used to obtain~\eqref{eq:rkhat}, we find
\[ H_k(\theta) \= \ii^{k}\sum_{\substack{m>0, n<0 \\ (m,n) = 1}}\frac{1}{(me^{\ii\theta/2}+ne^{-\ii\theta/2})^{k}}.\]
We compute the second derivative
\begin{align} H_k''(\theta) 
&\= -\frac{k}{4\ii^k} \sum_{\substack{m>0, n<0 \\ (m,n) = 1}} \frac{4 m n - k (me^{\ii\theta/2}-ne^{-\ii\theta/2})^2}{(me^{\ii\theta/2}+ne^{-\ii\theta/2})^{k+2}} \\
&\= \frac{k}{2^{k+4}}\frac{4+4k\cos(\theta/2)^2}{\sin(\theta/2)^{k+2}} \+ \Im R_k''(\theta).
\end{align}
We treat the remainder similar to \cref{lem:I}. Estimate
\[ 4 mn \leq 2(m^2+n^2) \qquad \textnormal{ and } \qquad |me^{\ii\theta/2}-ne^{-\ii\theta/2}|^2 \leq 2 (m^2+n^2).\]
Then, by taking out the terms with $m^2+n^2=5$ separately, we find
\begin{align} |\Im R_k''(\theta)| 
&\,\leq\,  \frac{k}{2}\frac{k(5+4\cos\theta)^2+8}{(5-4\cos\theta)^{k+2}}\+\frac{k}{4}\sum_{N \geq 10} \frac{(2+2k)N \cdot N^{1/2} }{(\frac{1}{2} N)^{k/2+1}}.
\end{align}
Hence, for $\theta\in (\pi/3,\pi/2)$ we find
\[|\Im R_k''(\theta)|\,\leq\, 
\frac{k}{2}\frac{49k+8}{3^{k+2}} \+
\frac{k(k+1)}{2} \frac{4\sqrt{10}}{5^{k/2}},\]
whereas for $\theta\in(\pi/2,2\pi/3)$ one obtains
\[|\Im R_k''(\theta)|\,\leq\, 
\frac{k}{2}\frac{25k+8}{5^{k+2}} \+
\frac{k(k+1)}{2} \frac{4\sqrt{10}}{5^{k/2}}.\]
Hence, we find
\[ H_k''(\theta) \,\geq\,
    \frac{k(2k+4)}{2^{k/2+2}} \meno \frac{k}{2}\frac{49k+8}{3^{k+2}} \meno
\frac{k(k+1)}{2} \frac{4\sqrt{10}}{5^{k/2}} \]
for $\theta\in (\pi/3,\pi/2)$ and
\[ H_k''(\theta) \,\geq\,   \frac{1}{4}\frac{k(k+4)}{3^{k/2+1}} \meno \frac{k}{2}\frac{25k+8}{5^{k+2}} \meno
\frac{k(k+1)}{2} \frac{4\sqrt{10}}{5^{k/2}} \]
for $\theta\in(\pi/2,2\pi/3)$. Therefore, $ H_k''(\theta) >0$ for $\theta \in (\pi/3, 2\pi/3)$ and $k\geq 17$.
\end{proof}

We now exploit the fact that $\Re \,\mathbb{G}_k(\theta)$ is the sum of a sine and a strictly monotonous function as in~\eqref{eq:convexandsine}, to bound the number of its zeros. 
\begin{lem}\label{lem:zeros}
Let $k\geq 1$ be odd, $A=[\pi/3,2\pi/3]$ and $h:A\to \r_{>0}$ be a continuous strictly convex positive-valued function such that $0<h(\frac{2\pi}{3})<\frac{1}{2}\sqrt{3}$. Then
\[ \sin(k\theta/2) + h(\theta) \quad \text{and} \quad \sin(k\theta/2) - h(\theta)\]
 admit at most
 \[2\Bigl\lfloor \frac{k+5}{12}\Bigr\rfloor+\delta_{k\equiv 5(6)} \qquad \text{and} \qquad 2\Bigl\lfloor \frac{k}{12}\Bigr\rfloor+2-\delta_{k\equiv 1 (6)}\]
 zeros on~$A$ respectively.
\end{lem}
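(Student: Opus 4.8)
The plan is to dissect $A=[\pi/3,2\pi/3]$ along the zeros $\theta_j:=2\pi j/k$ of $\sin(k\theta/2)$, and to use that $\tfrac{d^2}{d\theta^2}\sin(k\theta/2)=-\tfrac{k^2}{4}\sin(k\theta/2)$. On a \emph{negative bump} $[\theta_j,\theta_{j+1}]$ with $j$ odd we have $\sin(k\theta/2)\le 0$, so $\sin(k\theta/2)$ is convex there; hence $\sin(k\theta/2)+h$ is strictly convex (a convex function plus the strictly convex $h$), and a strictly convex function has at most two zeros. On a \emph{positive bump} ($j$ even) we have $\sin(k\theta/2)+h\ge h>0$, so no zeros. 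Symmetrically, $\sin(k\theta/2)-h$ is strictly concave on each positive bump (hence at most two zeros there) and is $\le -h<0$ on each negative bump. Restricting a bump to its intersection with $A$ preserves these statements; note no differentiability of $h$ is used, only convexity.

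Next I would record the elementary refinement: if $F$ is convex on an interval with right endpoint $b$ and $F(b)<0$, then $F$ has at most one zero there — otherwise two zeros $z_1<z_2$ would force $F(x)\ge 0$ for all $x\ge z_2$ by the convex extrapolation inequality, contradicting $F(b)<0$; and symmetrically for concave $F$ with $F(b)>0$. For odd $k$ one has $\sin(k\pi/3)=-\tfrac12\sqrt3,\ \tfrac12\sqrt3,\ 0$ according as $k\equiv 5,1,3\pmod 6$. A short inequality check (writing $k=6m+5$, resp.\ $k=6m+1$) shows that for $k\equiv 5\pmod6$ the point $2\pi/3$ lies in the interior of the rightmost negative bump meeting $A$, and for $k\equiv 1\pmod6$ in the interior of the rightmost positive bump meeting $A$. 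Since $h(2\pi/3)<\tfrac12\sqrt3$, the value of $\sin(k\theta/2)+h$ at $2\pi/3$ is negative when $k\equiv 5\pmod6$, so by the refinement the rightmost negative bump contributes at most one zero; likewise $\sin(k\theta/2)-h$ is positive at $2\pi/3$ when $k\equiv 1\pmod6$, so its rightmost positive bump contributes at most one zero.

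It then remains to count bumps. The bump $[\theta_j,\theta_{j+1}]$ meets $A$ in a set of positive length exactly when $k/6-1<j<k/3$; the strict inequalities automatically discard the degenerate bumps meeting $A$ in a single point, which occur precisely when $3\mid k$. So the number of relevant negative bumps is $n=\#\{\,j\text{ odd}:k/6-1<j<k/3\,\}$ and of positive ones $p=\#\{\,j\text{ even}:k/6-1<j<k/3\,\}$, and by the above the number of zeros of $\sin(k\theta/2)+h$, resp.\ $\sin(k\theta/2)-h$, on $A$ is at most $2n-\delta_{k\equiv 5(6)}$, resp.\ $2p-\delta_{k\equiv 1(6)}$. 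A direct computation over the six residue classes of $k\bmod 12$ gives $n=\lfloor\tfrac{k+5}{12}\rfloor+\delta_{k\equiv 5(6)}$ and $p=\lfloor\tfrac{k}{12}\rfloor+1$, from which the two claimed bounds follow.

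The conceptual input (convexity $\Rightarrow$ at most two zeros, plus the one-sided refinement at $2\pi/3$) is light; I expect the only real work to be bookkeeping — verifying in the borderline classes $k\equiv 1,5\pmod 6$ that $2\pi/3$ sits in the interior of the claimed extremal bump, and carrying out the $\bmod 12$ count so that the parities of $2\lfloor\tfrac{k+5}{12}\rfloor+\delta_{k\equiv 5(6)}$ and $2\lfloor\tfrac{k}{12}\rfloor+2-\delta_{k\equiv 1(6)}$ come out exactly right.
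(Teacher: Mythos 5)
Your proof is correct and takes essentially the same route as the paper's: split $A$ into the maximal subintervals on which $\sin(k\theta/2)$ is non-positive (hence convex) or non-negative (hence concave), get at most two zeros per relevant interval from strict convexity/concavity, shave off one zero in the interval containing $2\pi/3$ when $k\equiv 5$ resp.\ $1 \pmod 6$ using $0<h(2\pi/3)<\frac{1}{2}\sqrt{3}$, and count the intervals modulo $12$. Your counts $n$ and $p$ coincide with the paper's $|\mathcal{I}^-|$ and $|\mathcal{I}^+|$; you merely spell out the indexing and the residue-class bookkeeping that the paper leaves implicit.
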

\begin{proof}
Note that the function $s_k:\theta\mapsto \sin(k\theta/2)$ at some point $\theta$ is either positive or convex. Write $\mathcal{I}^-$ and~$\mathcal{I}^+$ for the collection of (closed) intervals on which $s_k$ is non-positive (convex) and non-negative (concave) respectively, where we assume that all intervals~$I$ are maximal with respect to this property. 

Now, recall that a continuous strictly convex function~$f$ admits at most two zeros. Hence, $s_k+h$ admits at most two zeros on each interval~$I\in \mathcal{I}^-$. Now, suppose $k\equiv 5 \bmod 6$. Then, $I=[a,\frac{2\pi}{3})\in \mathcal{I}^-$ for some $a$. As $(s_k+h)(a)=h(a)>0$ and $f(2\pi/3)<\frac{1}{2}\sqrt{3}-h(a)<0$ in that case, there is at most $1$ zero on this interval~$I$. Hence, $s_k+h$ admits at most $2|\mathcal{I}^-|-\delta_{k\equiv 5\,(6)}$ zeros. Similarly, $s_k-h$ admits at most $2|\mathcal{I}^+|-\delta_{\substack{k\equiv 1\,(6)}}$ zeros if $k>1$. To conclude the proof, observe that
\begin{align}
|\mathcal{I}^+|=\Bigl\lfloor \frac{k}{12}\Bigr\rfloor+1, \qquad |\mathcal{I}^-|=\Bigl\lfloor \frac{k+5}{12}\Bigr\rfloor+\delta_{k\equiv 5(6)}\,. & \qedhere
\end{align}
\end{proof}

\begin{lem}
For $k\geq 7$, we have $N_\infty(\GEis_k)\leq \lceil \frac{k}{6} \rceil$. Moreover, if additionally $k$ equals $ 1,7$ or $9 \bmod 12$, we have $N_\infty(\GEis_k)\leq\lfloor \frac{k}{6} \rfloor$.
\end{lem}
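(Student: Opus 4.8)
The plan is to run the Rankin--Swinnerton-Dyer machinery exactly as in the proof of \cref{prop:thm1}, but now for $\widehat{\GEis}_k$ instead of $\widehat{E}_k$, using \cref{lem:zeros} to control the number of sign changes of the (suitably rotated) imaginary part of $\widehat{\GEis}_k$ along the arc $\mathcal{C}$.

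First I would check that $\GEis_k$ has no zeros on $\mathcal{C}$. Since $\widehat{\GEis}_k$ is a nonzero constant multiple of $\widehat{E}_k(\theta)-e^{\ii k\theta/2}$ and $|e^{\ii k\theta/2}|=1$, the expansion \eqref{eq:Ekhat} together with \eqref{eq:RSD} gives $|\widehat{E}_k(\theta)-e^{\ii k\theta/2}|\ge 1-\frac{1}{(2\cos\frac{\theta}{2})^{k}}-\frac{1}{(2\sin\frac{\theta}{2})^{k}}-|\widehat{R}_k(\theta)|>0$ for $\theta$ bounded away from the two corners; near $\rho$ and $\rho+1$ one needs a finer expansion of $\widehat{\GEis}_k$, which is harmless when $3\nmid k$ but requires the subleading term when $3\mid k$, where the leading contributions cancel. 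Granting this, \cref{lem:VOA} yields $N_\infty(\GEis_k)=\frac{k}{12}+\VOA_{\mathcal C}(\widehat{\GEis}_k)$. As multiplication by a nonzero constant leaves the variation of the argument unchanged, I would replace $\widehat{\GEis}_k$ by $g(\theta):=c\,\widehat{\GEis}_k(\theta)$ with $c=(-1)^{(k+1)/2}\frac{(-2\pi\ii)^k\zeta(k)}{(k-1)!}$, so that by \eqref{eq:convexandsine} we have $\Im g(\theta)=H_k(\theta)+(-1)^{(k-1)/2}\sin(k\theta/2)$ on $[\pi/3,2\pi/3]$.

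The elementary geometric input is that between two consecutive zeros of $\Im g$ the curve $\theta\mapsto g(\theta)$ stays in a closed half-plane, so its argument changes by at most $\pi$ there, and since $\Im g$ does not vanish at the endpoints the two outer pieces contribute strictly less; hence $|\VOA_{\mathcal C}(\widehat{\GEis}_k)|<\frac12(Z+1)$, where $Z$ is the number of sign changes of $\Im g$ on $(\pi/3,2\pi/3)$. Now $\Im g$ equals $\sin(k\theta/2)+H_k$ when $k\equiv1\pmod 4$ and $-(\sin(k\theta/2)-H_k)$ when $k\equiv3\pmod4$, so \cref{lem:zeros} applies with $h=H_k$; its hypotheses are precisely \cref{lem:convex} (strict convexity and positivity of $H_k$ on $[\pi/3,2\pi/3]$, valid for $k\ge17$) together with $0<H_k(\tfrac{2\pi}{3})\le I_k(e^{2\pi\ii/3})<\tfrac12\sqrt3$, which follows from the positivity established in the proof of \cref{lem:thm1} and the estimate of \cref{lem:I}. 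This gives $Z\le 2\lfloor\frac{k+5}{12}\rfloor+\delta_{k\equiv 5(6)}$ if $k\equiv1\pmod4$, and $Z\le 2\lfloor\frac{k}{12}\rfloor+2-\delta_{k\equiv 1(6)}$ if $k\equiv3\pmod4$.

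Feeding these into $N_\infty(\GEis_k)<\frac{k}{12}+\frac12(Z+1)$ and using that $N_\infty(\GEis_k)$ is an integer --- which holds because, $\GEis_k$ having no zeros on $\mathcal C$, its zeros in $\FD$ are interior points (weight $1$), the simple zero at $\ii\infty$ (weight $1$), and pairs on $\mathcal L$ and $\mathcal R$ interchanged by $\tau\mapsto\tau+1$ (weight $1$ per pair) --- yields $N_\infty(\GEis_k)\le\lceil k/6\rceil$ in all six residue classes modulo $12$, and even $N_\infty(\GEis_k)\le\lfloor k/6\rfloor$ when $k\equiv1$ or $7\pmod{12}$. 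The ``$\lfloor\cdot\rfloor$'' statement then reduces to the class $k\equiv9\pmod{12}$, where the crude bound only gives $N_\infty(\GEis_k)\le\lfloor k/6\rfloor+1$, so one extra unit must be gained; the finitely many $k\in\{7,9,11,13,15\}$ below the range $k\ge17$ of \cref{lem:convex} are settled by the same argument with sharper error estimates, as at the end of \cref{prop:thm1}, or by direct evaluation. I expect the main obstacle to be exactly this last unit: when $k\equiv9\pmod{12}$ one has $\sin(k\pi/6)=-1$ and $(2\sin\frac{\pi}{6})^k=1$, so $\Im g(\pi/3)=(-1)^{(k+1)/2}\Im\widehat{R}_k(\pi/3)$ is exponentially small, and whether the curve $g$ acquires an additional half-turn near $\theta=\pi/3$ --- which is precisely what separates $\lfloor k/6\rfloor$ from $\lfloor k/6\rfloor+1$ --- is dictated by the sign of $\Im\widehat{R}_k(\pi/3)$, whose dominant terms (those with $m^2+n^2=5$ in \eqref{eq:rkhat}) cancel because $3\mid k$. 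Pinning this down, together with the analogous subleading analysis near $\rho$ and a careful accounting of the two outer arcs' contribution to the variation of the argument, is the technical heart of the proof.
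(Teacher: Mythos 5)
Your overall strategy coincides with the paper's: reduce to $\VOA_{\mathcal{C}}(\widehat{\GEis}_k)$ via \cref{lem:VOA}, use~\eqref{eq:convexandsine} to write the relevant imaginary part as $\pm\sin(k\theta/2)+H_k(\theta)$, and invoke \cref{lem:convex} and \cref{lem:zeros} to count its sign changes on the arc. However, there is a genuine gap, which you yourself flag: bounding the winding only by $|\VOA_{\mathcal{C}}|<\tfrac12(Z+1)$ throws away the information of \emph{where} the curve starts and ends, and this loses exactly one unit in the class $k\equiv 9\pmod{12}$, where your argument gives $N_\infty(\GEis_k)\leq\lfloor k/6\rfloor+1$ instead of the claimed $\lfloor k/6\rfloor$. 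Your diagnosis of the obstruction (cancellation of the $m^2+n^2=5$ terms when $3\mid k$) is accurate, but the proposal stops at identifying the difficulty rather than resolving it, so the second assertion of the lemma is not proved for $k\equiv 9\pmod{12}$.

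The paper closes this gap by computing the \emph{exact} arguments of $\widehat{\GEis}_k$ at the two endpoints $\rho$ and $\rho+1$ of $\mathcal{C}$, and only then using \cref{lem:zeros} to bound the number of crossings of the real axis in between; with both endpoint arguments pinned down, the variation of the argument is determined rather than merely bounded. The key input is that $\mathcal{G}_k(\rho):=E_k(\rho)-1$ is purely imaginary (since $\ii\mathcal{G}_k$ has real Fourier coefficients and $-\overline{\rho}=\rho+1$), so one only needs its sign: for $3\nmid k$ this follows from~\eqref{eq:Ekrho}, while for $3\mid k$ the leading terms $\rho^{-k}+(\rho+1)^{-k}$ cancel and the sign is read off from the next term $(\rho-1)^{-k}$, which dominates the tail $I_k(e^{2\pi\ii/3})$ by \cref{lem:I}; this yields~\eqref{eq:signrho} and hence $\arg\widehat{\GEis}_k(\rho)=0$ for $k\equiv 3,9\pmod{12}$. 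This endpoint computation is precisely the ``subleading analysis near $\rho$'' you defer, and it is what supplies the missing unit in the case $k\equiv 9\pmod{12}$ (it also settles your preliminary concern that $\GEis_k$ has no zero at the corners when $3\mid k$). Without carrying it out, the proof of the lemma is incomplete.
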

\begin{proof}
Assume $k\geq 7$. By~\eqref{eq:Ekrho} we find 
\[\bigl|E_k(\rho)-1 \+ \chi(k)\,\ii\sqrt{3}\bigr|  \,\leq \, 
3^{-k/2} + 4\Bigl(\frac{5}{2}\Bigr)^{-k/2}+\frac{20\sqrt{2}}{k-3}\Bigl(\frac{9}{2}\Bigr)^{(3-k)/2}.\]
Note $\mathcal{G}_k:=E_k-1$ equals $\GEis_k$ up to a multiplicative (imaginary) constant.
Since $\mathcal{G}_k(\rho)\in \r\ii$, we conclude that $\arg \mathcal{G}_k(\rho) = -\pi/2$ if $k\equiv 1 \bmod 3$ and $\arg \mathcal{G}_k(\rho)=\pi/2$ if $k\equiv 2 \bmod 3$. In case $k\equiv 0 \bmod 3$, one needs a more refined estimate. 
Observe
\[ \frac{1}{\rho^k}+\frac{1}{(\rho+1)^k}=0\]
in this case. Now,  
\[ \Bigl|\Im\mathcal{G}_k(\rho) - \frac{1}{(\rho-1)^k}\Bigr| < I_k(2\pi/3). \]
As $|(\rho-1)^{-k}|>I_k(e^{2\pi/3})$ by \cref{lem:I} and using again $\mathcal{G}_k(\rho)\in \r\ii$, we obtain 
\begin{align}\label{eq:signrho} \arg \mathcal{G}_k(\rho) \= \begin{cases}
-\frac{1}{2}\pi & k\equiv 3 \bmod 12 \\ \frac{1}{2}\pi & k \equiv 9 \bmod 12.\end{cases}\end{align}
Hence,
\[
\arg \widehat{\GEis}_k(\rho) \= \begin{cases}
\frac{1}{3}\pi & k\equiv 1 \mod 12\\
0 & k\equiv 3 \mod 12\\
\frac{2}{3}\pi & k\equiv 5 \mod 12\\
-\frac{2}{3}\pi & k\equiv 7 \mod 12\\
0 & k\equiv 9 \mod 12\\
-\frac{1}{3}\pi & k\equiv 11 \mod 12
\end{cases}
\]
and 
\[
\arg \widehat{\GEis}_k(\rho+1) \= \begin{cases}
\frac{1}{6}\pi & k\equiv 1 \mod 6\\
\frac{1}{2}\pi & k\equiv 3 \mod 6\\
-\frac{1}{6}\pi & k\equiv 5 \mod 6.
\end{cases}
\]

Also, observe $H_k(2\pi/3)<\frac{1}{2}\sqrt{3}$  by \cref{lem:I} as
\begin{align} H_k(2\pi/3) \,\leq\, 3^{-k/2}+(4 \sqrt{10}+2)\cdot 5^{-k/2}.\end{align}
This enables us to obtain an upper bound for $\VOA_{\mathcal{C}}(\widehat{\GEis}_k)$  (case by case modulo~$12$) using \cref{lem:zeros} applied to $h = H_k\mspace{1mu}.$ 
The result then follows from~\cref{lem:VOA}.
\end{proof}

\subsection{Lower bound on the number of zeros of~\texorpdfstring{$\GEis_k$}{the Eisenstein series}}
The goal of this section is to show that $\GEis_k$ has all of its zeros on~$\FD$ on the vertical boundaries $\mathcal{L}$ and $\mathcal{R}$. Following \cite[Section~3]{GO22}, we do this by counting the number of sign changes of~$\GEis_k\mspace{1mu}$. 
Define $z_\ell=\frac{1}{2}+\ii t_\ell$ and $t_\ell := \frac{1}{2}\cot(\pi \ell/k)$, where $1\leq \ell \leq \lfloor \frac{k}{6}\rfloor$. Note that for such $\ell$ one has $t_\ell \geq \frac{\sqrt{3}}{2}$. 
Then, \cref{thm:4} will follow from 
\begin{prop} 
\begin{equation}
\label{conjecEis}
    \sgn \GEis_k(z_\ell) = (-1)^\ell .
\end{equation}  
\end{prop}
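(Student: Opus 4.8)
The plan is to reduce \eqref{conjecEis} to a statement about a single dominant term and then split into two regimes. First note that, since $\GEis_k=\sum_{n\ge1}\sigma_{k-1}(n)q^n$ has real (in fact nonnegative integer) Fourier coefficients and $q=e^{2\pi\ii z_\ell}=-e^{-2\pi t_\ell}\in\r$, we have $\GEis_k(z_\ell)=\sum_{n\ge1}(-1)^n\sigma_{k-1}(n)e^{-2\pi t_\ell n}\in\r$, so \eqref{conjecEis} is meaningful. The key elementary observation is that the choice $t_\ell=\tfrac12\cot(\pi\ell/k)$ is equivalent to $\arg z_\ell=\tfrac\pi2-\tfrac{\pi\ell}{k}$, and, since $z_\ell-1=-\overline{z_\ell}$, to $(z_\ell-1)^k=z_\ell^k$. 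I would then fix a threshold $\ell_0$ of order $\sqrt k$ (and a constant $k_0$, treating $k<k_0$ by direct computation) and argue separately for $\ell>\ell_0$ and $\ell\le\ell_0$.

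For $\ell>\ell_0$ the height $t_\ell$ is bounded by $O(\sqrt k)$, and I would use the lattice-sum form \eqref{eq:GGk}, writing as in \eqref{eq:Ekexp}
\[ E_k(z_\ell)-1 \= z_\ell^{-k}\+(z_\ell-1)^{-k}\+R, \]
with $R$ collecting all remaining terms. From $(z_\ell-1)^k=z_\ell^k$ we get $z_\ell^{-k}+(z_\ell-1)^{-k}=2z_\ell^{-k}$, which has argument $-k\arg z_\ell=-\tfrac{k\pi}{2}+\pi\ell$; multiplying by the nonzero, purely imaginary constant $\tfrac{(k-1)!\,\zeta(k)}{(-2\pi\ii)^k}$ relating $\GEis_k$ to $E_k-1$, this contributes a real quantity of sign $(-1)^\ell$ and absolute value $\tfrac{2\zeta(k)(k-1)!}{(2\pi)^k}|z_\ell|^{-k}$, while $R$ contributes a real error of absolute value at most $\tfrac{\zeta(k)(k-1)!}{(2\pi)^k}|R|$. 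So it suffices to prove $|R|<2|z_\ell|^{-k}$. This is a Rankin--Swinnerton-Dyer-type estimate in the spirit of \cref{lem:Rktau} and \cref{lem:I}: isolating the $m=1$ rows and using $|mz_\ell+n|^2\ge\tfrac12(m^2|z_\ell|^2+n^2)$ for the rest, one bounds $|R|/(2|z_\ell|^{-k})$ by $\sum_{j\ge1}\bigl(\tfrac{1/4+t_\ell^2}{(j+\frac12)^2+t_\ell^2}\bigr)^{k/2}$ plus an $O(2^{-k})$ contribution from the rows $m\ge2$, and the displayed sum is $<1$ once $t_\ell\le c\sqrt k$ for a suitable absolute constant $c>0$, which holds when $\ell>\ell_0$.

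For $\ell\le\ell_0$ the height $t_\ell$ is large, the lattice sum is governed by cancellation rather than by its largest terms, and I would use the $q$-expansion directly. Write $a_n:=\sigma_{k-1}(n)e^{-2\pi t_\ell n}>0$, so $\GEis_k(z_\ell)=\sum_{n\ge1}(-1)^na_n$, and show that $a_\ell$ is the strict maximum and in fact $a_\ell>a_{\ell-1}+a_{\ell+1}$. Since $\sigma_{k-1}(n)=n^{k-1}\bigl(1+O(2^{-k})\bigr)$, the sequence $(a_n)$ is (up to this harmless correction) unimodal with maximum at the integer nearest to $n^*:=(k-1)/(2\pi t_\ell)=\tfrac{k-1}{\pi}\tan(\pi\ell/k)$; the point of the choice of $t_\ell$ is that $n^*=\ell\bigl(1+O(\ell^2/k^2)+O(1/k)\bigr)$ lies within $o(1)$ of $\ell$ for $\ell\le\ell_0\asymp\sqrt k$, forcing the maximum to be at $n=\ell$. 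Moreover, expanding $\log(1\pm1/\ell)$ and $\cot$, one gets $\log(a_\ell/a_{\ell\pm1})\ge\tfrac{k}{2\ell^2}\bigl(1+o(1)\bigr)$, which exceeds a large constant for $\ell\le\ell_0$, so $a_{\ell-1}+a_{\ell+1}<a_\ell$. By unimodality the alternating tails satisfy $\bigl|\sum_{n<\ell}(-1)^{n-\ell}a_n\bigr|\le a_{\ell-1}$ and $\bigl|\sum_{n>\ell}(-1)^{n-\ell}a_n\bigr|\le a_{\ell+1}$, whence $\sgn\GEis_k(z_\ell)=\sgn\bigl((-1)^\ell a_\ell\bigr)=(-1)^\ell$.

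I expect the main difficulty to be bookkeeping rather than conceptual: one must choose $\ell_0$ (and $k_0$) so that $|R|<2|z_\ell|^{-k}$ holds for every $\ell>\ell_0$ while simultaneously $a_\ell>a_{\ell-1}+a_{\ell+1}$ holds for every $\ell\le\ell_0$, with completely explicit constants, and then dispose of the finitely many remaining pairs $(k,\ell)$ by numerical verification. The two regimes are genuinely complementary — the lattice series is controlled by its two largest terms precisely in the range where the $q$-series fails to be controlled by a single term, and conversely — so the only real issue is to arrange that the two ranges of validity overlap.
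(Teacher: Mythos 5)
Your proposal is correct and follows essentially the same route as the paper: the paper likewise splits at $\ell \asymp \sqrt{k}$ (precisely, $3\ell^2 \le k-1$ versus $3\ell^2 \ge k$), treating the first regime via the $q$-expansion with a single dominant term at $n=\ell$ (using the Lambert-series terms $u_n = n^{k-1}q^n/(1-q^n)$ with a factor-$2$ decay and pairwise grouping, where you use $\sigma_{k-1}(n)|q|^n$ with an alternating-series bound) and the second via the lattice sum dominated by the two terms $z_\ell^{-k}+(z_\ell-1)^{-k} = 2z_\ell^{-k}$ with a Rankin--Swinnerton-Dyer-type remainder bound (quoted there from [GO22]). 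The quantitative bookkeeping you defer is carried out in the paper and confirms that the two ranges of validity overlap.
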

\begin{proof}
We distinguish two cases, namely, (i)~$3\ell^2\leq k-1$ ($z_\ell$ is `close to $\ii \infty$') and (ii)~$3\ell^2\geq k$ ($z_\ell$ is `close to $\rho$'). 

We start with the first case. Write
\[\GEis_k(\tau) \= \sum_{n = 1}^\infty u_n(\tau), \qquad u_n(\tau) \= n^{k-1}\frac{q^n}{1 - q^n}.\]
It turns out that the dominant contribution of~$\GEis_k(z_\ell)$ is $u_n(z_\ell)$ for $n = \ell$. More precisely, we show that 
\begin{enumerate}[{\upshape (a)}]
    \item $\displaystyle 2|u_{n+1}|<|u_n|$ for $n\geq \ell$;
    \item $\displaystyle 2|u_{n-1}|<|u_n|$ for $2\leq n\leq \ell$.
\end{enumerate}
Write $q_\ell=q|_{\tau=z_\ell}$. Note $e^{-\pi \sqrt{3}}\leq |q_\ell|<1$. 
For part~(a) we observe that \cite[Eqn.~(26)] {GO22}
\[\Bigl(\frac{\ell+1}{\ell}\Bigr)^{k-1}|q_\ell|\leq e^{-3/4}.\]
Hence, if $n\geq \ell$,
\begin{align}
\frac{|u_{n+1}|}{|u_n|} &\= \Bigl(\frac{n+1}{n}\Bigr)^{\!k-1}\,|q_\ell|\,\frac{|1-q_\ell^n|}{|1-q_\ell^{n+1}|} \\
&\,\leq\, \Bigl(\frac{\ell+1}{\ell}\Bigr)^{\!k-1}\,|q_\ell|\,\frac{1+|q_\ell|}{1-|q_\ell|^{2}} \\
&\= \Bigl(\frac{\ell+1}{\ell}\Bigr)^{\!k-1}\,\frac{|q_\ell|}{1-|q_\ell|} \,\leq\, \frac{e^{-3/4}}{(1-e^{-\pi \sqrt{3}})} \,<\, \frac{1}{2}.
\end{align}

For part~(b) we observe that \cite[Eqn.~(29)]{GO22}
\[\Bigl(\frac{\ell-1}{\ell}\Bigr)^{k-1}|q_\ell|\leq e^{-1}.\]
Hence, if $2\leq n\leq \ell$,
\begin{align}
\frac{|u_{n-1}|}{|u_n|} &\= \Bigl(\frac{n-1}{n}\Bigr)^{\!k-1}\,|q_\ell|^{-1}\,\frac{|1-q_\ell^n|}{|1-q_\ell^{n-1}|} \\
&\,\leq\, \Bigl(\frac{\ell-1}{\ell}\Bigr)^{\!k-1}\,|q_\ell|^{-1}\,\frac{1+|q_\ell|^2}{1-|q_\ell|} \,\leq\, e^{-1}\frac{1+e^{-2\pi \sqrt{3}}}{1-e^{-\pi \sqrt{3}}} \,<\, \frac{1}{2}.
\end{align}

Observe $\sgn u_\ell(\frac{1}{2} + \ii t_\ell) = (-1)^\ell$. Writing
\[ \GEis_k = \Bigl(\frac{u_\ell}{2}+u_{\ell-1}\Bigr) + \sum_{j=1}^{\lceil \frac{\ell}{2}-1\rceil}(u_{\ell-2j}+u_{\ell-2j-1}) \+ \Bigl(\frac{u_\ell}{2} + u_{\ell+1}\Bigr) + \sum_{j=1}^{\infty}(u_{\ell+2j}+u_{\ell+2j+1}), \]
where we set $u_{0}=0$, we see that $\GEis_k$ is sum of non-zero terms with sign $(-1)^\ell$. Hence, the statement of the proposition follows in this case. 

Next, we assume that $3\ell^2\leq k$. As we assumed that $1\leq \ell \leq \lfloor \frac{k}{6}\rfloor$, we observe that $k\geq 11$. 
Write
\[ \GEis_k(z_\ell) \= -\frac{(k-1)!}{(2\pi \ii)^k} \sum_{\mu\gg 0} \frac{1}{(m z_\ell+n)^k} \= 
\frac{(k-1)!}{(2\pi)^k}\biggl(2(-1)^\ell |z_\ell|^{-k} \meno \frac{1}{\ii^k} \mathbb{R}_k(z_\ell)\ \biggr)
\]
with 
\[ \mathbb{R}_k(\tau) \defis \sum_{\substack{\mu\gg 0\\ \mu \not \in \{(1,-1),(1,0)\}}} \!\!\!\frac{1}{(m\tau+n)^k}.\]
It remains to show that the remainder $\mathbb{R}_k(z_\ell)$ is in absolute value smaller than the leading contribution. By \cite[{Lemma~14 and~15}]{GO22}, we obtain
\[\frac{|\mathbb{R}_k(t_\ell)|}{|z_\ell|^{-k}} \leq \frac{5}{8} + \sum_{m\geq 2} m^{1-k}\Bigl(\Bigl(\frac{\sqrt{3}}{2}\Bigr)^{-k}+3\Bigr) \,\leq \, 
\frac{5}{8} + (\zeta(10)-1)\Bigl(\Bigl(\frac{\sqrt{3}}{2}\Bigr)^{-11}+3\Bigr) <\frac{2}{3}.
\]
Hence, $\sgn  \GEis_k(z_\ell) =(-1)^\ell.$
\end{proof}
\begin{proof}[Proof of \cref{thm:4}]
The real-valued function $t \mapsto \GEis_k( \frac{1}{2} + \ii t)$ changes sign $\lfloor\frac{k}{6}\rfloor-1$ times for $t\in (\frac{1}{2}\sqrt{3},\infty)$. Hence, additionally including the zero at $\ii \infty$, we find that $\GEis_k$ admits at least $\lfloor \frac{k}{6}\rfloor$ zeros on~$\mathcal{L}$. 
In case $k\equiv 3,5,11 \bmod 12$ we find an additional sign change of $\GEis_k(\frac{1}{2}+\ii t)$ on~$\mathcal{L}$, since the sign at $t=\frac{1}{2}\sqrt{3}$ and $t=t_\ell$ with  $\ell=\lfloor \frac{k}{6}\rfloor$ are different. 
\end{proof}
\begin{cor}\label{cor:thm3infty} For all odd $k$ one has
\[N_\infty(\GEis_k) = \begin{cases} \bigl\lceil \frac{k}{6} \bigr\rceil & \text{if } k\equiv 3,5,11 \bmod 12\\[3pt]
\bigl\lfloor \frac{k}{6} \bigr\rfloor & \text{if } k\equiv 1,7,9 \bmod 12.
\end{cases}
\]
\end{cor}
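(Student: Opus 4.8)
The statement to prove is \cref{cor:thm3infty}, the assertion that $N_\infty(\GEis_k)$ equals $\lceil k/6\rceil$ or $\lfloor k/6\rfloor$ according to $k\bmod 12$. This should be essentially immediate from the two preceding results: the lemma giving the upper bound $N_\infty(\GEis_k)\leq\lceil k/6\rceil$ (and $\leq\lfloor k/6\rfloor$ when $k\equiv 1,7,9\bmod 12$), and \cref{thm:4}, whose proof shows the matching lower bound by counting sign changes of $t\mapsto\GEis_k(\tfrac12+\ii t)$.

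The plan is as follows. First I would observe that for $k\geq 7$ the upper bound lemma and \cref{thm:4} together pin down $N_\infty(\GEis_k)$ exactly: in the cases $k\equiv 3,5,11\bmod 12$ the upper bound $\lceil k/6\rceil$ coincides with the lower bound $\lceil k/6\rceil$ obtained in the proof of \cref{thm:4} (the $\lfloor k/6\rfloor$ zeros from sign changes on $\mathcal{L}$ plus one extra sign change), and in the cases $k\equiv 1,7,9\bmod 12$ the refined upper bound $\lfloor k/6\rfloor$ coincides with the lower bound $\lfloor k/6\rfloor$ from the $\lfloor k/6\rfloor-1$ sign changes together with the zero at $\ii\infty$. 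Hence the corollary holds for all odd $k\geq 7$ (indeed the quoted lemma and proposition only need $k\geq 7$, and \cref{lem:I}, \cref{lem:convex} etc.\ impose at worst $k\geq 17$, so I would double-check the small cases below).

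Second, I would dispose of the remaining small odd values $k\in\{1,3,5\}$ (and any others not covered by the stated ranges, e.g.\ if \cref{lem:convex} forces $k\geq 17$ one also checks $k\in\{7,9,11,13,15\}$) by direct computation: for each such $k$ one numerically locates the zeros of $\GEis_k$ in $\FD$ — using the $q$-expansion $\GEis_k(\tau)=-\frac{B_k}{2k}+\sum_{m,r\geq1}m^{k-1}q^{mr}$ truncated to enough terms, which converges rapidly on $\FD$ since $|q|\leq e^{-\pi\sqrt3}$ — and verifies that the count, weighted by $w(\tau)$ and including the simple zero at $\ii\infty$ given by $\nu_\infty(\GEis_k)=1$, matches the claimed value. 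This is the kind of finite check already invoked in the proofs of \cref{prop:thm1} and \cref{thm:2}.

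The main obstacle, such as it is, is purely bookkeeping: making sure the hypotheses of the upper-bound lemma and of \cref{thm:4} are simultaneously in force (the former needs $k\geq7$, the proposition feeding \cref{thm:4} needs $k\geq 11$ in case (ii)), and that the residue classes line up so that upper and lower bounds actually agree rather than leaving a gap of $1$. If there is a genuinely delicate point it is in the $k\equiv 3,9\bmod 12$ cases, where $\arg\mathcal{G}_k(\rho)$ required the refined estimate \eqref{eq:signrho} comparing $|(\rho-1)^{-k}|$ with $I_k(e^{2\pi/3})$; I would make sure that estimate is valid down to the smallest $k$ in the relevant class and otherwise fall back on the numerical check. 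Apart from that, the corollary is a one-line consequence of combining the two established inequalities.
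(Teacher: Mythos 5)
Your proposal is correct and follows essentially the same route as the paper: the corollary is obtained by combining the upper bound $N_\infty(\GEis_k)\leq\lceil k/6\rceil$ (refined to $\lfloor k/6\rfloor$ for $k\equiv 1,7,9\bmod 12$) from the lemma preceding \cref{thm:4} with the matching lower bound coming from the sign changes of $t\mapsto\GEis_k(\tfrac12+\ii t)$ together with the zero at $\ii\infty$, the small residual values of $k$ being handled by direct verification. Your bookkeeping of which residue classes the two bounds meet in is exactly the paper's (implicit) argument.
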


\section{Zeros in any fundamental domain}\label{sec:alllambda}
\subsection{Preliminaries}
Recall $\Gamma=\sltwoz$. 
For $\gamma=\abcd\in \Gamma$ we have
\begin{align}
{E}_{k}|\gamma(\tau) & \= \sum_{\substack{\mu=m'\tau+n'\\ (m',n') \= 1 \\\mu\succ0}} \frac{1}{((am'+cn')\tau +(bm'+dn'))^k} \\
&\= \frac{1}{(c\tau+d)^k}\+\sum_{\substack{(m,n)=1 \\ dm>cn}} \frac{1}{(m\tau+n)^k}\mspace{1mu}, \qquad \text{where} \qquad \begin{pmatrix} m\\n\end{pmatrix}=\begin{pmatrix} a & c \\ b & d\end{pmatrix}\begin{pmatrix} m'\\n'\end{pmatrix}
\end{align}
and the slash action $|\gamma$ is defined by~\eqref{eq:slash}.

Assume $c<0$ and write $\lambda=-\frac{d}{c}$. If $\lambda\not \in\{-1,0,1,\infty\}$, then
\begin{align}
{E}_{k}|\gamma(\tau)
&\= 1\+\frac{\sgn(\lambda)}{\tau^k}\+\frac{\sgn(\lambda+1)}{(\tau+1)^k}\+\frac{\sgn(\lambda-1)}{(\tau-1)^k} \+ {R}_{k,\lambda}(\tau), \label{eq:Ekhatgamma}
\end{align}
where ${R}_{k,\lambda}$ contains all terms in~${E}_{k}|\gamma$ with $m^2+n^2\geq 5$. 
In the special cases we obtain
\begin{align}
{E}_{k}|\gamma(\tau)
&\= 1\meno\frac{1}{\tau^k}\+\frac{1}{(\tau+1)^k}\meno\frac{1}{(\tau-1)^k} \+ {R}_{k,0}(\tau) \qquad (\lambda=0),\label{eq:Ekhatgamma0}\\
{E}_{k}|\gamma(\tau)
&\= 1\+\frac{1}{\tau^k}\+\frac{1}{(\tau+1)^k}\meno\frac{1}{(\tau-1)^k} \+ {R}_{k,1}(\tau) \qquad (\lambda=1)\label{eq:Ekhatgamma1}.
\end{align}

Similarly,
\begin{align}\label{eq:GkEkcomp}
\mathbb{G}_k|\gamma(\tau) \,\propto\, E_k|\gamma(\tau) \meno \frac{1}{(c\tau+d)^k} \= 1\+\frac{\sgn(\lambda)}{\tau^k}\+\frac{\sgn(\lambda+1)}{(\tau+1)^k}\+\frac{\sgn(\lambda-1)}{(\tau-1)^k} \+ \mathbb{R}_{k,\lambda}(\tau), 
\end{align}
where $\mathbb{R}_{k,\lambda}(\tau) = R_{k,\lambda}(\tau)-\frac{1}{(c\tau+d)^k}$. Note that, with the same proof as in \cref{lem:Rktau}, we have
\begin{equation}\label{eq:rkest2}
|R_{k,\lambda}(\tau)|\,\leq \,\frac{6\sqrt{5}}{(5/2)^{k/2}} \qquad\qquad (k\geq 11 \text{ and } \tau \in \FD). 
\end{equation}

We now set out to prove \cref{thm:1} by explicitly computing $\VOA_{\mathcal{S}}(E_k|\gamma)$ for $\mathcal{S}\in \{\mathcal{L},\mathcal{C},\mathcal{R}\}$ as a function of $\lambda=\gamma^{-1}(\infty)$. As before, to do so, we aim to control the vanishing of the real or imaginary part of~$E_k$ or $\widehat{E}_k$, as we do in a series of lemmas. As we will see, in many cases the same ideas apply to $\GEis_k$ instead of~$E_k$ and $\mathbb{R}_{k,\lambda}$ instead of~$R_{k,\lambda}$ respectively. 

\subsection{The case \texorpdfstring{$\lambda>1$}{lambda bigger than 1}}
\begin{lem} \label{lem:C}
For all $\lambda>0$, the value $\Im \widehat{{E_k}|\gamma}(\theta)$ is non-zero for $\theta\in [\pi/3,2\pi/3].$
\end{lem}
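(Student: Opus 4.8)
The strategy mirrors that of \cref{lem:thm1}: the function $\Im\widehat{{E_k}|\gamma}(\theta)$ is a sum of an explicit ``main'' contribution coming from the four distinguished lattice vectors $\mu\in\{0,\pm1\}$ together with $(c\tau+d)^{-k}$, plus a remainder $\Im\widehat{R_{k,\lambda}}$ that decays exponentially in $k$. On the circular arc $|\tau|=1$, $\theta\in[\pi/3,2\pi/3]$, I would first compute the imaginary parts of the main terms. From~\eqref{eq:Ekhatgamma} we have, writing $\tau=e^{\ii\theta}$,
\[
\widehat{{E_k}|\gamma}(\theta) \= 2\cos\tfrac{k\theta}{2}\+\frac{\sgn(\lambda)}{(2\cos\tfrac{\theta}{2})^k}\+(-1)^{(k+1)/2}\,\frac{\sgn(\lambda-1)-\sgn(\lambda+1)}{(2\sin\tfrac{\theta}{2})^k}\,\ii\+\widehat{R_{k,\lambda}}(\theta).
\]
For $\lambda>1$ one has $\sgn(\lambda-1)=\sgn(\lambda+1)=1$, so the two ``$\pm1$'' terms have cancelling imaginary parts, and $2\cos\tfrac{k\theta}{2}$ is real; hence the main contribution to $\Im\widehat{{E_k}|\gamma}$ vanishes — the naive leading term is gone. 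This is exactly the feature that makes the case $\lambda>1$ different, and it is the main obstacle: one must extract a genuine, non-vanishing lower-order main term.

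The fix is to keep the next lattice vectors. After the vectors with $m^2+n^2\le 1$, the smallest ones are $(m,n)=(1,\pm2)$ and $(2,\pm1)$ (those with $m^2+n^2=5$). Since $\lambda>1$, the condition $dm>cn$, i.e.\ $m>-\lambda n$ ... for $\lambda>1$ one checks that $(1,-1)$ and $(1,0)$ are included but $(1,-2)$ may or may not be, depending on whether $\lambda>2$; in all cases the vector $(1,1)$ and its partner pair to kill their imaginary contribution as in~\eqref{eq:rkhat}, while the genuinely surviving imaginary part comes from the vectors with one positive and one negative coordinate that are \emph{not} cancelled by a symmetric partner. Concretely I would split $\Im\widehat{R_{k,\lambda}}(\theta)=\tfrac1\ii\sum_{m>0,\,n<0,\,m>-\lambda n,\,(m,n)=1}(me^{\ii\theta/2}+ne^{-\ii\theta/2})^{-k}$ minus the analogous sum over $m<0,n>0$ in the excluded region; for $\lambda>1$ the dominant such term is $(m,n)=(1,-1)$ (with $|e^{\ii\theta/2}-e^{-\ii\theta/2}|=2\sin\tfrac\theta2$, so $|(\,\cdot\,)^{-k}|=(2\sin\tfrac\theta2)^{-k}$, maximised at the endpoints and bounded below by $2^{-k/2}$ on $[\pi/3,2\pi/3]$... actually equal to $2^{-k/2}$ at $\theta=\pi/2$ since $\sin(\pi/4)=\tfrac{1}{\sqrt2}$ — wait, one must check whether $(1,-1)$ satisfies $m>-\lambda n$, i.e.\ $1>\lambda$, which \emph{fails} for $\lambda>1$; so $(1,-1)$ lies in the \emph{excluded} region and contributes with the opposite sign). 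Either way the term $(1,-1)$ appears with a definite sign and absolute value $(2\sin\tfrac\theta2)^{-k}$, and the remaining terms have $m^2+n^2\ge 5$, so by the estimate~\eqref{estimate} they are bounded by $4\sqrt{10}\cdot5^{-k/2}$ together with a $3^{-k/2}$ or $5^{-k/2}$ from the $m^2+n^2=5$ terms exactly as in \cref{lem:I}. Hence $|\Im\widehat{{E_k}|\gamma}(\theta)|\ge (2\sin\tfrac\theta2)^{-k}-2\cdot3^{-k/2}-4\sqrt{10}\cdot5^{-k/2}>0$ for $k$ large, by the same arithmetic as in \cref{lem:thm1}.

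Finally I would handle small $k$: the explicit bound above gives positivity for $k$ beyond some threshold (around $k\ge 11$ or $k\ge 13$, matching \cref{lem:thm1}), and for the finitely many remaining odd $k$ with $3\le k<$ threshold one either sharpens the tail estimate (replacing the integral bound by a finite sum over the few relevant lattice vectors, as was done for $E_k$ with $k\in\{3,5,7,9\}$ in the proof of \cref{prop:thm1}) or checks numerically. The conclusion is that $\Im\widehat{{E_k}|\gamma}(\theta)\ne0$ on $[\pi/3,2\pi/3]$ for every $\lambda>0$; note the statement is for all $\lambda>0$, and when $0<\lambda\le 1$ the surviving term is actually the larger $(2\sin\tfrac\theta2)^{-k}$ coming directly from one uncancelled ``$\pm1$'' vector in~\eqref{eq:Ekhatgamma}, so that regime is only easier. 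The main work, and the only real subtlety, is the bookkeeping of which small lattice vectors survive the inequality $dm>cn$ as a function of $\lambda$, and verifying that the surviving one always dominates the tail.
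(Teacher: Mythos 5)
Your opening computation contains a reality error that derails the whole argument. On the unit circle, $e^{\ii k\theta/2}(\tau+1)^{-k}=(e^{\ii\theta/2}+e^{-\ii\theta/2})^{-k}=(2\cos\tfrac{\theta}{2})^{-k}$ is \emph{real}, while $e^{\ii k\theta/2}(\tau-1)^{-k}=(2\ii\sin\tfrac{\theta}{2})^{-k}$ is purely imaginary for odd $k$; compare~\eqref{eq:Ekhat}. The correct specialization of~\eqref{eq:Ekhatgamma} to $|\tau|=1$ and $\lambda>0$ is therefore
\[
\widehat{E_k|\gamma}(\theta)\=2\cos\tfrac{k\theta}{2}\+\frac{\sgn(\lambda+1)}{(2\cos\tfrac{\theta}{2})^k}\+\sgn(\lambda-1)\,(-1)^{(k+1)/2}\,\frac{\ii}{(2\sin\tfrac{\theta}{2})^k}\+\widehat{R}_{k,\lambda}(\theta),
\]
so the coefficient of the imaginary main term is $\sgn(\lambda-1)$, not $\sgn(\lambda-1)-\sgn(\lambda+1)$. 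In particular nothing cancels when $\lambda>1$: the term $\pm(2\sin\tfrac{\theta}{2})^{-k}$ survives for every $\lambda>0$ with $\lambda\neq 1$ (and has coefficient $-1$ when $\lambda=1$, by~\eqref{eq:Ekhatgamma1}), and the proof is simply that of \cref{lem:thm1} again: this main term dominates $|\Im\widehat{R}_{k,\lambda}|$. Your subsequent hunt for a replacement main term among the vectors with $m^2+n^2=5$, and the discussion of whether $(1,-1)$ is ``excluded'' (it is in fact \emph{included} precisely when $\lambda>1$, since $dm>cn$ for $(m,n)=(1,-1)$ reads $d>-c$, i.e.\ $\lambda>1$ --- you inverted the inequality), is an attempt to repair a cancellation that never happens; the term you eventually resurrect from the remainder is exactly the $(\tau-1)^{-k}$ term you erroneously cancelled at the start.

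What does require an argument, and what the paper's proof actually supplies, is the bound $|\Im\widehat{R}_{k,\lambda}(\theta)|\leq I_k(e^{\ii\theta})$: the terms with $m,n>0$ cancel in pairs under $(m,n)\mapsto(n,m)$ (both lie in the summation region $dm>cn$ because $d>0>c$, which is where the hypothesis $\lambda>0$ enters), and the remaining terms with $mn<0$ together with the extra summand $(c\tau+d)^{-k}$ are in bijection with $\{(m,n): m>0,\ n<0,\ (m,n)=1,\ m^2+n^2\geq 5\}$, giving the bound by~\eqref{eq:I}. You gesture at the pairing but never account for the $(c\tau+d)^{-k}$ term. Finally, your closing inequality $|\Im\widehat{E_k|\gamma}(\theta)|\geq(2\sin\tfrac{\theta}{2})^{-k}-2\cdot 3^{-k/2}-4\sqrt{10}\cdot 5^{-k/2}$ is not positive near $\theta=2\pi/3$, where $(2\sin\tfrac{\theta}{2})^{-k}=3^{-k/2}$; one needs the two-interval split of \cref{lem:I}, with remainder bound $(4\sqrt{10}+2)\cdot 5^{-k/2}$ on $[\pi/2,2\pi/3]$, exactly as in \cref{lem:thm1}.
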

\begin{proof}
As $\lambda>0$, we can assume without loss of generality that $d>0$ and $c<0$. Note that if $m,n\geq 0$ are coprime, then both $(m,n)$ and $(n,m)$ are contained in the domain of summation of the sum in~\eqref{eq:Ekhatgamma}.  
Therefore, 
\[
|\Im\widehat{R}_{k,\lambda}(\theta)| \= \biggl|\frac{1}{(c\tau+d)^k}\+\sum_{\substack{(m,n)=1\\ dm>cn\\ mn<0}} \Im \frac{1}{(me^{\ii\theta/2}+ne^{-\ii\theta/2})^k} \biggr|
\,\leq\, I_k(e^{\ii\theta}), \]
where $I_k$ is defined by~\eqref{eq:I} and the inequality follows from the observation that 
\begin{align}
\{(m,n)\in \z^2 \mid dm>cn \text{ and } mn<0\} \,\cup\, \{(c,d)\} &\,\to\, \{(m,n)\in \z^2 \mid m>0,n<0\} \\
(m,n) &\,\mapsto\, \begin{cases} (m,n) & \text{if } m>0 \\ (-m,-n) & \text{if }  m<0 \end{cases}
\end{align}
 defines a bijection. 
Observe
\begin{align} \Im \widehat{{E_k}|\gamma}(\theta) &\= 
\frac{\epsilon(k,\lambda)}{(2\sin \tfrac{1}{2}\theta)^k}\+\Im \widehat{R}_{k,\lambda}(\theta),
\end{align}
where the sign $\epsilon(k,\lambda)$ is given by $\epsilon(k,\lambda)=\sgn(\lambda-1) (-1)^{(k+1)/2}$. By \cref{lem:I} and the same proof of \cref{lem:thm1} the result follows.
\end{proof}

\begin{lem}\label{lem:R} For all $\lambda>1$ and $k\geq 11$, the value
$\Re E_k|\gamma(\tau)$ is non-zero for $\tau \in \mathcal{R}$. 
\end{lem}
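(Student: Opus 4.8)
The plan is to follow the strategy of \cref{lem:12withr}, but on the line $\mathcal{R}$ rather than on a circle, exploiting a cancellation that is special to this line. Since $\lambda>1$ is a finite rational (and, as in \eqref{eq:Ekhatgamma}, we may take the representative with $c<0$), the expansion \eqref{eq:Ekhatgamma} applies, and all three signs appearing there equal $+1$; hence for $\tau\in\mathcal{R}$,
\[
E_k|\gamma(\tau) \= 1 \+ \frac{1}{\tau^k} \+ \frac{1}{(\tau+1)^k} \+ \frac{1}{(\tau-1)^k} \+ R_{k,\lambda}(\tau).
\]
The cusp $\ii\infty$ is trivial, since $E_k|\gamma(\tau)\to 1$ as $\Im\tau\to\infty$, so I would write $\tau=\tfrac12+\ii t$ with $t\geq\tfrac12\sqrt{3}$.

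First I would record the key observation: on $\mathcal{R}$ one has $\tau-1=-\tfrac12+\ii t=-\overline{\tau}$, so since $k$ is odd, $(\tau-1)^k=-\overline{\tau^k}$ and therefore $\tfrac{1}{(\tau-1)^k}=-\overline{\tau^{-k}}$; in particular
\[
\Re\frac{1}{\tau^k}\+\Re\frac{1}{(\tau-1)^k}\=0 .
\]
Thus $\Re E_k|\gamma(\tau)=1+\Re\tfrac{1}{(\tau+1)^k}+\Re R_{k,\lambda}(\tau)$, and it remains to estimate the last two terms. Since $\Re\tau=\tfrac12$ and $\Im\tau=t\geq\tfrac12\sqrt3$, we get $|\tau+1|^2=\tfrac94+t^2\geq 3$, whence $\bigl|\Re\tfrac{1}{(\tau+1)^k}\bigr|\leq|\tau+1|^{-k}\leq 3^{-k/2}$; and $|R_{k,\lambda}(\tau)|\leq\tfrac{6\sqrt5}{(5/2)^{k/2}}$ by \eqref{eq:rkest2}, which applies because $\mathcal{R}\subset\FD$ and $k\geq 11$. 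Hence
\[
\Re E_k|\gamma(\tau)\,\geq\, 1\meno 3^{-k/2}\meno\frac{6\sqrt5}{(5/2)^{k/2}},
\]
and the right-hand side decreases in $k$ and is already positive at $k=11$ (numerically $\approx 0.91$), so $\Re E_k|\gamma(\tau)>0$ for all $k\geq 11$.

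There is no serious obstacle here; the one step that genuinely matters is the exact cancellation $\Re\tau^{-k}+\Re(\tau-1)^{-k}=0$ on $\mathcal{R}$. Without it one could only bound $1+\Re\tau^{-k}+\Re(\tau-1)^{-k}\geq 1-2|\tau|^{-k}$, which degenerates to the useless estimate $\geq-1$ at the corner $\tau=\rho+1$, where $|\tau|=1$; the cancellation is exactly what controls the estimate near that corner. (For $\GEis_k$ one has $\mathbb{R}_{k,\lambda}=R_{k,\lambda}-(c\tau+d)^{-k}$, and the extra term $(c\tau+d)^{-k}$ does not cancel in the same way, so the $\lambda>1$ case for $\GEis_k$ — to the extent it is not already subsumed by \cref{thm:3} — would require a separate bound on $|(c\tau+d)^{-k}|$ on $\mathcal{R}$, which I would treat only if needed.)
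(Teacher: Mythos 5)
Your proof is correct and follows essentially the same route as the paper: the paper's argument also rests on the identity $\Re E_k|\gamma(\tau)=1+\Re(\tau+1)^{-k}+\Re R_{k,\lambda}(\tau)$ on $\mathcal{R}$ (i.e.\ the cancellation $\Re\tau^{-k}+\Re(\tau-1)^{-k}=0$, which you make explicit) combined with the bound \eqref{eq:rkest2}, the only difference being that you bound $|\Re(\tau+1)^{-k}|$ by $3^{-k/2}$ where the paper uses the cruder $\tfrac12$. (One trivial slip: your final lower bound $1-3^{-k/2}-6\sqrt5\,(5/2)^{-k/2}$ is \emph{increasing} in $k$, not decreasing, which is of course what you need.)
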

\begin{proof}
Observe that for $\tau \in \mathcal{R}$, we have
\[
\Re E_k|\gamma(\tau) \= 1 \+ \Re\frac{1}{(\tau+1)^k} \+ \Re R_{k,\lambda}(\tau). \]
and
\[|R_{k,\lambda}(\tau)|\,\leq \,\frac{6\sqrt{5}}{(5/2)^{k/2}} <\frac{1}{2}.\]
 Also, note that 
$\Re{((\tfrac{1}{2}+\ii t)+1)^{-k}} \leq \frac{1}{2}$
for all $k\geq 1$ and $t\geq \frac{1}{2}\sqrt{3}$. We conclude $(\Re E_{k}|\gamma)(\tau) > 0$ for all $\tau\in\mathcal{R}$ if $\lambda>1$ and $k\geq 11$.     
\end{proof}

These first two lemmata suffice to generalize \cref{prop:thm1}, i.e., to prove \cref{thm:1} for~$\lambda>1$:
\begin{prop}\label{prop:lambda>1}
For all $\lambda>1$ and $k\geq 3$ one has
\[ N_\lambda(E_k) \= \round\Bigl(\frac{k}{12}\Bigr)  \= N_\lambda(\GEis_k).\]
\end{prop}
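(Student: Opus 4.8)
The plan is to run the proof of \cref{prop:thm1} for $g:=f|\gamma$ in place of $E_k$, the essential new feature being that $g$ is no longer $1$-periodic, so the two vertical sides of $\partial\FD$ contribute a nonzero amount that has to be controlled. First I would note that $g$ differs from $f\circ\gamma$ only by the nowhere-vanishing factor $(c\tau+d)^{-k}$ and that $w$ is $\Gamma$-invariant, so $N_\lambda(f)=\sum_{\tau\in\FD}w(\tau)\,\nu_\tau(g)$; moreover, since $\lambda\neq\infty$ the cusp $\gamma(\ii\infty)$ of $\gamma\FD$ is a finite rational, so $\nu_\infty(g)=0$ for $g=E_k|\gamma$ and for $g=\GEis_k|\gamma$ by the first lemma of \cref{sec:infty}. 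Running the Cauchy-theorem argument behind \eqref{eq:VOA1} for such a $g$ --- now merely well-behaved at the cusp $\ii\infty$, so that the horizontal truncation at height $T\to\infty$ contributes $-\nu_\infty(g)=0$, and with no periodicity available to make the vertical contribution vanish --- yields
\[
N_\lambda(f)\= \frac{k}{12}\+\bigl(\VOA_{\mathcal{L}}+\VOA_{\mathcal{R}}\bigr)(g)\+\VOA_{\mathcal{C}}(\hat g).
\]
Since $\GEis_k|\gamma$ equals $E_k|\gamma-(c\tau+d)^{-k}$ up to a nonzero multiplicative constant (see \eqref{eq:GkEkcomp}), and the latter function has the same main part $1+\frac{\sgn(\lambda)}{\tau^k}+\frac{\sgn(\lambda+1)}{(\tau+1)^k}+\frac{\sgn(\lambda-1)}{(\tau-1)^k}$ as $E_k|\gamma$ with a remainder exceeding $R_{k,\lambda}$ only by the term $(c\tau+d)^{-k}$, which has modulus $\leq(5/2)^{-k/2}$ on $\FD$ (as $c^2+d^2\geq5$ when $\lambda\neq 0,\pm1,\infty$), all the estimates below go through for it verbatim; so it suffices to treat $g=E_k|\gamma$.

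Next I would bound, for $k\geq 11$, the two pieces in the displayed formula. For the circular arc, \cref{lem:C} shows $\Im\hat g$ has constant sign on $[\pi/3,2\pi/3]$ --- note that here, unlike the case $\lambda=\infty$ governed by \eqref{eq:convexandsine}, there is no oscillating $\sin(k\theta/2)$-term --- so $g$ has no zeros on $\mathcal{C}$ and $|\VOA_{\mathcal{C}}(\hat g)|<\tfrac12$. For the vertical sides, the point is that $\lambda>1$ makes all three signs in the main part equal $+1$: on $\mathcal{R}$ one has $\tau-1=-\overline\tau$ and on $\mathcal{L}$ one has $\tau+1=-\overline\tau$, so (as $k$ is odd) $\Re(\tau^{-k}+(\tau-1)^{-k})=0$ on $\mathcal{R}$ and $\Re(\tau^{-k}+(\tau+1)^{-k})=0$ on $\mathcal{L}$; the one surviving term among the three has modulus $\leq 3^{-k/2}$, so together with \eqref{eq:rkest2} one gets $\Re g>0$ on $\mathcal{L}\cup\mathcal{R}$ (on $\mathcal{R}$ this is \cref{lem:R}, and on $\mathcal{L}$ it is the mirror-image computation). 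Hence $g$ has no zeros on $\partial\FD$ at all, so $N_\lambda(f)$ is a non-negative integer. Normalizing $\arg g$ to vanish at $\ii\infty$ (legitimate since $g\to 1$ there), the argument of $g$ stays in an interval of length $\pi$ along each of $\mathcal{L}$ and $\mathcal{R}$, so by \eqref{eq:VOA}
\[
\bigl(\VOA_{\mathcal{L}}+\VOA_{\mathcal{R}}\bigr)(g)\=\frac{\arg g(\rho)-\arg g(\rho+1)}{2\pi}.
\]

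The crux is to show this last quantity has absolute value $<\tfrac1{12}$. Evaluating the main part at $\rho$ and at $\rho+1$ and using $|(\rho-1)^{-k}|=|(\rho+2)^{-k}|=3^{-k/2}$ together with \eqref{eq:rkest2}, one sees that $g(\rho)$ and $g(\rho+1)$ both lie within distance $3^{-k/2}+\tfrac{6\sqrt5}{(5/2)^{k/2}}$ of the common point $1+\rho^{-k}+(\rho+1)^{-k}$, which equals $1-\chi(k)\ii\sqrt3$ if $3\nmid k$ and equals $1$ if $3\mid k$ --- in either case a point of modulus $\geq 1$. By \cref{lem:angle}(i) the arguments of $g(\rho)$ and $g(\rho+1)$ then differ by less than $\tfrac\pi6$ for $k\geq 11$, whence $|(\VOA_{\mathcal{L}}+\VOA_{\mathcal{R}})(g)|<\tfrac1{12}$. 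Combining with the arc bound gives $|N_\lambda(f)-\tfrac k{12}|<\tfrac7{12}$; since $N_\lambda(f)$ is a non-negative integer and, for odd $k$, the integer $\round(\tfrac k{12})$ is the unique one lying within distance $\tfrac7{12}$ of $\tfrac k{12}$, we conclude $N_\lambda(E_k)=\round(\tfrac k{12})=N_\lambda(\GEis_k)$ for $k\geq 11$; the four cases $k\in\{3,5,7,9\}$, below the range of validity of \cref{lem:C} and \cref{lem:R}, would be settled by sharpening those estimates, exactly as at the end of the proof of \cref{prop:thm1}. The step I expect to be the main obstacle is this last one: the bounds $|\VOA_{\mathcal{C}}(\hat g)|<\tfrac12$ and $|\VOA_{\mathcal{L}}+\VOA_{\mathcal{R}}|<\tfrac12$ alone only place $N_\lambda(f)$ within distance $1$ of $\tfrac k{12}$, and it is precisely the near-equality of the corner values $g(\rho)$ and $g(\rho+1)$ --- pushing the vertical contribution well below $\tfrac1{12}$ --- that pins down the correct integer.
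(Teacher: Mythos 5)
Your proposal is correct and follows essentially the same route as the paper: the same decomposition of $N_\lambda$ into contributions from $\mathcal{L}$, $\mathcal{C}$, $\mathcal{R}$, the same key inputs (\cref{lem:C} on the arc, \cref{lem:R} and its mirror on the vertical sides, the corner evaluations at $\rho$ and $\rho+1$ combined with \cref{lem:angle}), and the same absorption of $(c\tau+d)^{-k}$ into the remainder to handle $\GEis_k$. The only difference is the endgame bookkeeping --- the paper pins $\VOA_{\mathcal{C}}(\widehat{E_k|\gamma})$ to exponential accuracy at $\round(\frac{k}{12})-\frac{k}{12}$ case by case mod $12$, whereas you use the cruder $|\VOA_{\mathcal{C}}|<\frac12$ together with the $\frac1{12}$ bound on the vertical contribution and the observation that for odd $k$ only one integer lies within $\frac{7}{12}$ of $\frac{k}{12}$ --- which is a cosmetic variation, not a different proof.
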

\begin{proof}
We compute
\[(E_{k}|\gamma)(\rho) \= 
\begin{cases} 
 1 & k\equiv 0 \bmod 3 \\
1-\ii\sqrt{3} & k\equiv 1 \bmod 3 \\ 1+\ii\sqrt{3} & k\equiv 2 \bmod 3 \end{cases} 
\ \+\frac{1}{(-\frac{3}{2}+\frac{1}{2}\ii\sqrt{3})^k}
\+R_{k,\lambda}(\rho),\]
and similarly for $(E_{k}|\gamma)(\rho+1) $. Hence, using~\eqref{eq:rkest2}, we obtain
\begin{align}
|(E_k|\gamma)(\rho)-(E_k|\gamma)(\rho+1)|&\,<\,\frac{2}{3^{k/2}}\+\frac{12\sqrt{5}}{(5/2)^{k/2}} \,<\, \frac{30}{(5/2)^{k/2}}.
\end{align}
By \cref{lem:angle} and \cref{lem:R} this implies that
\[\bigl|\VOA_{\mathcal{L}}(E_k|\gamma)\+\VOA_{\mathcal{R}}(E_k|\gamma)\bigr|\,<\,\frac{30}{(5/2)^{k/2}}. \]
Moreover, by \cref{lem:C} 
\begin{align}
\VOA_\mathcal{C}(\widehat{E}_k|\gamma) &\,\approx \,  
\begin{cases} 
-\frac{1}{12} & k \equiv 1 \bmod 12 \\
-\frac{1}{4} & k \equiv 3 \bmod 12 \\
-\frac{5}{12} & k \equiv 5 \bmod 12 \\
\frac{5}{12} & k \equiv 7 \bmod 12 \\
\frac{1}{4} & k \equiv 9 \bmod 12 \\
\frac{1}{12} & k \equiv 11 \bmod 12.
\end{cases}
\end{align}
More precisely,
\begin{align}\Bigl|\VOA_{\mathcal{C}}(\widehat{E_k|\gamma}) \meno \Bigl(\round\Bigl(\frac{k}{12}\Bigr)-\frac{k}{12}\Bigr)\Bigr| \,<\,  \frac{30}{(5/2)^{k/2}}. \end{align}
Hence,
\begin{align} \label{eq:VOAonC}
\Bigl|\VOA_{\mathcal{C}}(E_k|\gamma) \meno \round\Bigl(\frac{k}{12}\Bigr)\Bigr| \,<\, \frac{30}{(5/2)^{k/2}}.  \end{align}
As 
\[N_\lambda(E_k) \= \bigl(\VOA_{\mathcal{L}}\+\VOA_{\mathcal{C}}\+\VOA_{\mathcal{R}}\bigr)(E_k|\gamma)\]
is an integer, the statement follows for $k\geq 13$. By improving the error estimates for small $k$, one obtains the same result for all $k\geq 3$. 

Recall $\GEis_k$ is proportional to $E_k$ up to the summand $\frac{1}{(c\tau+d)^k}$ with $-\frac{c}{d} =\lambda>1$ (see~\eqref{eq:GkEkcomp}). Now, since $c^2+d^2\geq 5$, this term is taken care of in the error estimates, from which we conclude that  \cref{lem:C}, \cref{lem:R} and this result also hold for $\GEis_k$ if $\lambda>1$. 
\end{proof}

\subsection{The case \texorpdfstring{$\lambda=0$}{lambda is 0}}
\begin{lem}\label{lem:relambda0}
For $\lambda=0$ and $k\geq 11$, the value
\begin{itemize}
\item $\Re \widehat{E}_k|\gamma(\tau)$ is non-zero for $\tau \in \mathcal{C}$\emph{;}
\item $\Re E_k|\gamma(\tau)$ is non-zero for $\tau\in \mathcal{R}$.
\end{itemize}
Here, $\gamma$ is such that $\lambda(\gamma)=0$
\end{lem}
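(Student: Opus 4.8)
The plan is to mimic the structure of \cref{lem:thm1}, \cref{lem:12withr} and \cref{lem:R}, but now with the expansion \eqref{eq:Ekhatgamma0} valid for $\lambda = 0$, i.e.
\[ E_k|\gamma(\tau) \= 1\meno\frac{1}{\tau^k}\+\frac{1}{(\tau+1)^k}\meno\frac{1}{(\tau-1)^k} \+ R_{k,0}(\tau). \]
The key point is that the signs of the four main terms have changed compared with \eqref{eq:Ekexp}, so I must recompute which of $\Re$ or $\Im$ (of $E_k|\gamma$ or $\widehat{E_k|\gamma}$) stays away from zero on which piece of $\partial\FD$. First I would treat the circular part $\mathcal{C}$: parametrising $\tau = e^{\ii\theta}$ and multiplying by $e^{\ii k\theta/2}$ as in \eqref{eq:hat}, the term $1 - \tau^{-k}$ contributes $e^{\ii k\theta/2} - e^{-\ii k\theta/2} = 2\ii\sin\tfrac12 k\theta$ (purely imaginary!), while $(\tau+1)^{-k} = (2\cos\tfrac12\theta)^{-k}e^{-\ii k\theta/2}\cdot e^{\ii k\theta/2}$-type bookkeeping shows $(\tau-1)^{-k}$ contributes something of the form $\pm\ii (2\sin\tfrac12\theta)^{-k}$ and $(\tau+1)^{-k}$ contributes a real term $(2\cos\tfrac12\theta)^{-k}$. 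Thus $\Re\widehat{E_k|\gamma}(\theta) = (2\cos\tfrac12\theta)^{-k} + \Re\widehat{R}_{k,0}(\theta)$, and the analogue of the Rankin--Swinnerton-Dyer bound \eqref{eq:RSD} together with \cref{lem:Rktau}/\eqref{eq:rkest2} shows $|\Re\widehat{R}_{k,0}(\theta)| \le \tfrac{6\sqrt5}{(5/2)^{k/2}}$. Since $(2\cos\tfrac12\theta)^{-k} \ge (2\cos\tfrac{\pi}{3})^{-k}\cdot$ — more precisely $2\cos\tfrac12\theta \le 2\cos\tfrac{\pi}{6} = \sqrt3$ on $[\pi/3,2\pi/3]$, giving $(2\cos\tfrac12\theta)^{-k}\ge 3^{-k/2}$ — the main term beats the remainder for $k\ge 11$, proving $\Re\widehat{E_k|\gamma}(\tau)\ne 0$ on $\mathcal{C}$.

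For the second bullet, on the right vertical line $\mathcal{R}$, i.e.\ $\tau = \tfrac12 + \ii t$ with $t\ge\tfrac12\sqrt3$, I would follow the proof of \cref{lem:R} verbatim up to sign changes. Here $\Re E_k|\gamma(\tfrac12+\ii t) = 1 - \Re\tfrac{1}{\tau^k} + \Re\tfrac{1}{(\tau+1)^k} - \Re\tfrac{1}{(\tau-1)^k} + \Re R_{k,0}(\tau)$; the key inequalities are $|\tau| = |\tfrac12+\ii t| \ge 1$ so $|\tau^{-k}|\le 1$ is too weak — instead I use $|\tau|\ge 1$ together with the fact that on $\mathcal R$ we have $|\tau-1| = |{-\tfrac12}+\ii t| = |\tau|\ge 1$ as well, and $|\tau+1| = |\tfrac32+\ii t|$ which is $\ge \tfrac32$ so that $|(\tau+1)^{-k}|\le (3/2)^{-k}$; combined with $|\tau^{-1}|,|( \tau-1)^{-1}|\le 1$ this is still not enough by itself, so I instead keep the term $1$ and bound all four perturbations: $|{-\tau^{-k}} + (\tau+1)^{-k} - (\tau-1)^{-k}| \le 2\cdot(\text{something})$. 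The cleanest route, as in \cref{lem:12withr}, is to note $\Re(\tfrac12+\ii t + 1)^{-k} = \Re(\tfrac32+\ii t)^{-k}$ has absolute value $\le (3/2)^{-k}\le \tfrac12$, and — crucially — that $\tfrac1{\tau^k}$ and $\tfrac1{(\tau-1)^k}$ are complex conjugates of negatives of each other up to sign because $\tau-1 = -\overline{\tau}$ when $\Re\tau = \tfrac12$? No: $\tau - 1 = -\tfrac12+\ii t$ and $-\overline\tau = -\tfrac12 + \ii t$, so indeed $\tau - 1 = -\overline{\tau}$ on $\mathcal R$; hence $\tfrac1{(\tau-1)^k} = \tfrac1{(-\overline\tau)^k} = -\overline{\tau^{-k}}$ (using $k$ odd), so $-\tfrac1{\tau^k} - \tfrac1{(\tau-1)^k} = -\tfrac1{\tau^k} + \overline{\tfrac1{\tau^k}} = -2\ii\,\Im\tfrac1{\tau^k}$, which is \emph{purely imaginary}. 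Therefore $\Re E_k|\gamma(\tau) = 1 + \Re(\tfrac32+\ii t)^{-k} + \Re R_{k,0}(\tau) \ge 1 - (3/2)^{-k} - \tfrac{6\sqrt5}{(5/2)^{k/2}} > 0$ for $k\ge 11$, which is the desired statement.

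This conjugation trick ($\tau - 1 = -\overline{\tau}$ on $\mathcal R$, $\tau + 1 = -\overline\tau$ on $\mathcal L$) is the conceptual heart of the argument and is exactly what makes the $\lambda = 0$ case work despite the changed signs; I would state it as a short observation before the two bullets. For the first bullet the analogous observation is that on $\mathcal{C}$, i.e.\ $|\tau| = 1$, one has $\tau^{-1} = \overline{\tau}$, so $1 - \tau^{-k} = 1 - \overline{\tau^k}$ and after multiplication by $\tau^{k/2} = e^{\ii k\theta/2}$ this contributes $e^{\ii k\theta/2} - e^{-\ii k\theta/2} = 2\ii\sin\tfrac12 k\theta$, purely imaginary, while $(\tau - 1)^{-k}$ with $\tau - 1 = e^{\ii\theta} - 1 = 2\ii\sin\tfrac12\theta\, e^{\ii\theta/2}$ gives $\tau^{k/2}(\tau-1)^{-k} = (2\ii\sin\tfrac12\theta)^{-k} e^{-\ii k\theta/2}e^{\ii k\theta/2} = \ii^{-k}(2\sin\tfrac12\theta)^{-k}$, which for odd $k$ is again $\pm\ii(2\sin\tfrac12\theta)^{-k}$, purely imaginary; only $\tau^{k/2}(\tau+1)^{-k}$ survives in the real part and equals $(2\cos\tfrac12\theta)^{-k}$ as computed above. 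The main obstacle is purely bookkeeping: getting all the signs and powers of $\ii$ straight, and verifying the numerical inequality $(\text{main term lower bound}) > \tfrac{6\sqrt5}{(5/2)^{k/2}}$ holds for $k\ge 11$ (and separately checking $k\in\{3,5,7,9\}$ by a sharper remainder estimate or direct computation, exactly as in \cref{prop:thm1} and \cref{lem:R}); there is no new analytic difficulty beyond what \cref{lem:Rktau} and \cref{lem:I} already provide.
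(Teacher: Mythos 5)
Your treatment of the second bullet (the line $\mathcal{R}$) is correct and is essentially the paper's own argument: on $\Re\tau=\tfrac{1}{2}$ one has $\tau-1=-\overline{\tau}$, so $-\tau^{-k}-(\tau-1)^{-k}$ is purely imaginary for odd $k$, and $\Re E_k|\gamma(\tau)=1+\Re(\tau+1)^{-k}+\Re R_{k,0}(\tau)$ is bounded below by $1$ minus exponentially small quantities. Your identification of the real part of the four main terms on $\mathcal{C}$ (only $(2\cos\tfrac{1}{2}\theta)^{-k}$ survives) also agrees with the paper.

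The gap is in the remainder estimate for the first bullet. The inequality you dismiss as bookkeeping, namely $(2\cos\tfrac{1}{2}\theta)^{-k}\geq 3^{-k/2}>6\sqrt{5}\,(5/2)^{-k/2}$, is false for every $k$: it is equivalent to $(5/6)^{k/2}>6\sqrt{5}$, and the left-hand side is less than $1$. The crude bounds of \cref{lem:Rktau} and \eqref{eq:rkest2} (and likewise \eqref{eq:RSD}) have leading term of order $(5/2)^{-k/2}$, coming from the lattice points with $m^2+n^2=5$ estimated via $|m\tau+n|^2\geq\tfrac{1}{2}(m^2+n^2)$, and this decays strictly more slowly than the main term near $\theta=\pi/3$, where $(2\cos\tfrac{1}{2}\theta)^{-k}$ is as small as $3^{-k/2}$. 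So your main term does not beat your remainder, and the first bullet does not follow from the estimate you invoke. What is needed, and what the paper does, is the finer \cref{lem:I}-style estimate: inside $\Re\widehat{R}_{k,0}$ the terms indexed by $(m,n)$ and $(-n,-m)$ with $m<0<n$ sum to a purely imaginary quantity (the same conjugation trick you use elsewhere, now applied within the remainder sum), so only lattice points with $m,n\geq 0$ contribute to the real part; for those the $m^2+n^2=5$ terms are bounded by $2(5+4\cos\theta)^{-k/2}$, which is at most $2\cdot 5^{-k/2}$ on $[\pi/3,\pi/2]$ (where the main term is $\geq 3^{-k/2}$) and at most $2\cdot 3^{-k/2}$ on $[\pi/2,2\pi/3]$ (where the main term is $\geq 2^{-k/2}$), while the tail $m^2+n^2\geq 10$ contributes $O(5^{-k/2})$. (The paper's displayed bound lists these two subcases in the opposite order, apparently copied from \cref{lem:I}; the version just stated is the one that makes the comparison work.) Without this cancellation and the separate, sharper treatment of the $m^2+n^2=5$ terms, your argument for non-vanishing on $\mathcal{C}$ fails.
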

\begin{proof}
Note
\[ \Re \widehat{E_k|\gamma}(\theta)\= \frac{1}{(2\cos\tfrac{1}{2}\theta)^k} \+ \Re\widehat{R}_{k,0}(\theta).\]
Now, similar as to \cref{lem:I} we estimate for all $k\geq 9$
\begin{align}
|\Re\widehat{R}_{k,0}(\theta)| &\,\leq\, \frac{4\sqrt{10}}{5^{k/2}}\+\begin{cases} 2\cdot3^{-k/2} & \theta\in [\pi/3,\pi/2] \\ 2\cdot5^{-k/2} & \theta\in [\pi/2,2\pi/3].\end{cases}
\end{align}
Hence, by a similar argument as in \cref{lem:thm1}, we conclude that $\Re\widehat{E_{k}|\gamma}(\theta)\neq 0$ for $k\geq 11$. 

For the second statement, let $\tau \in \mathcal{R}$ and note that
\[  \Im \frac{1}{\tau^k}+\Im \frac{1}{(\tau-1)^k} \= 0
\]
and
$|\tau+1|^{-k} \, \leq \, {3^{-k/2}}$. Hence, we obtain
\[ \Im E_k|\gamma(\tau) \= 1 + \frac{1}{(\tau+1)^k}+R_{k,0}(\tau)
\,\geq\, 1- \frac{1}{3^{k/2}} -\frac{6\sqrt{5}}{(5/2)^{k/2}},
\]
which implies the second part for $k\geq 11$. 
\end{proof}

\begin{prop}\label{prop:thm10}
For all odd $k\geq 11$
\[ N_0(E_k) \= 
\begin{cases} 
\lfloor \frac{k}{12} \rfloor & \text{if } k\equiv 1,7,9 \bmod 12 \\[3pt]
\lceil \frac{k}{12} \rceil &  \text{if } k\equiv 3,5,11 \bmod 12.
\end{cases}
\]
\end{prop}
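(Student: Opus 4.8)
The plan is to run the argument of \cref{prop:lambda>1} with $\lambda=0$; the new difficulty is that $E_k|\gamma$ is no longer $1$-periodic, so $\VOA_{\mathcal{L}}(E_k|\gamma)$ and $\VOA_{\mathcal{R}}(E_k|\gamma)$ do not cancel and must be analysed separately. Fix $\gamma=\left(\begin{smallmatrix} 0 & 1 \\ -1 & 0\end{smallmatrix}\right)$, so that $\lambda(\gamma)=0$ and $E_k|\gamma$ has the expansion~\eqref{eq:Ekhatgamma0}. Since $E_k|\gamma$ has no zeros on~$\mathcal{C}$ by \cref{lem:relambda0}, the valence-type identity~\eqref{eq:VOA1} gives
\[
N_0(E_k) \= \frac{k}{12} \+ \bigl(\VOA_{\mathcal{L}}+\VOA_{\mathcal{R}}\bigr)(E_k|\gamma) \+ \VOA_{\mathcal{C}}(\widehat{E_k|\gamma}),
\]
and since $N_0(E_k)$ is an integer while $\tfrac{k}{12}$ is never a half-integer for odd $k$, it suffices to pin the right-hand side down to within less than $\tfrac12$. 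As in \cref{sec:zerosEk}, all terms of $E_k|\gamma$ other than $1,\tau^{-k},(\tau\pm1)^{-k}$ are collected into $R_{k,0}$, which is uniformly $O\bigl((5/2)^{-k/2}\bigr)$ on~$\FD$ by~\eqref{eq:rkest2}; hence each of the three terms above will be computed up to an exponentially small error, case by case mod~$12$.

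By \cref{lem:relambda0}, $\Re\widehat{E_k|\gamma}\neq0$ on $\mathcal{C}$, so $\arg\widehat{E_k|\gamma}$ stays in $(-\tfrac\pi2,\tfrac\pi2)$ there and, by~\eqref{eq:VOA},
\[
\VOA_{\mathcal{C}}(\widehat{E_k|\gamma}) \= \frac{1}{2\pi}\Bigl(\arg\widehat{E_k|\gamma}(\rho+1)-\arg\widehat{E_k|\gamma}(\rho)\Bigr).
\]
Because $|\rho|=|\rho+1|=1$, the leading part of $\widehat{E_k|\gamma}$ at these two endpoints is an explicit combination of sixth roots of unity, $1$ and $2^{\pm k}$ together with the sign pattern of~\eqref{eq:Ekhatgamma0}, so both arguments are determined modulo the exponentially small error and depend only on $k\bmod12$. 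For the right vertical side, note that $\tau-1=-\overline\tau$ when $\tau\in\mathcal{R}$, so the $\tau^{-k}$ and $(\tau-1)^{-k}$ terms of~\eqref{eq:Ekhatgamma0} combine to $-2\ii\,\Im(\tau^{-k})$ and hence
\[
\Re E_k|\gamma(\tau) \= 1 \+ \Re\frac{1}{(\tau+1)^{k}} \+ \Re R_{k,0}(\tau) \,>\, 0 ,
\]
which is exactly the second bullet of \cref{lem:relambda0}; since $E_k|\gamma(\ii\infty)=1$, the argument of $E_k|\gamma$ never leaves $(-\tfrac\pi2,\tfrac\pi2)$ along $\mathcal{R}$, and $\VOA_{\mathcal{R}}(E_k|\gamma)=-\tfrac1{2\pi}\arg E_k|\gamma(\rho+1)$, again read off from the explicit leading term.

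The left vertical side is the crux. For $\tau\in\mathcal{L}$ one has $\tau+1=-\overline\tau$, so now the $\tau^{-k}$ and $(\tau+1)^{-k}$ terms combine to the \emph{real} quantity $-2\Re(\tau^{-k})$, whence $\Im E_k|\gamma(\tau)=-\Im(\tau-1)^{-k}+\Im R_{k,0}(\tau)$ is $O\bigl((5/2)^{-k/2}\bigr)$ on all of $\mathcal{L}$; thus the curve $E_k|\gamma(\mathcal{L})$ lies in a narrow horizontal strip about $\r$, running from $\approx1$ at $\ii\infty$ to $E_k|\gamma(\rho)\approx 1-2\cos\tfrac{2\pi k}{3}$. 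If $k\not\equiv0\pmod3$, then $\Re E_k|\gamma(\rho)\approx2$, and one checks that $2\Re(\tau^{-k})\le 2|\tau|^{-k}$ never reaches $1$ on~$\mathcal{L}$: this is immediate once $|\tau|>2^{1/k}$, and on the remaining $O(1/k)$-arc near~$\rho$ it holds because $\arg(\rho^{-k})$ lies a definite distance ($\mp\tfrac{2\pi}{3}\bmod2\pi$) away from~$0$, so $\cos(k\arg\tau)$ cannot be close to~$1$ while $|\tau|^{-k}$ is still close to~$1$. Hence $\Re E_k|\gamma>0$ throughout $\mathcal{L}$ and $\VOA_{\mathcal{L}}(E_k|\gamma)=\tfrac1{2\pi}\arg E_k|\gamma(\rho)$ is exponentially small. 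If $k\equiv0\pmod3$ (i.e.\ $k\equiv3,9\pmod{12}$), then $\Re E_k|\gamma(\rho)\approx-1$ while $\Re E_k|\gamma(\ii\infty)\approx1$, so $\Re E_k|\gamma$ must change sign on $\mathcal{L}$ near $\rho$, forcing the image curve to pass close to the origin; one then has to (i)~rule out an actual zero of $E_k|\gamma$ on $\mathcal{L}$ and (ii)~decide whether $\arg E_k|\gamma(\rho)$ equals $+\pi$ or $-\pi$ and that the total winding of $E_k|\gamma$ along $\mathcal{L}$ is~$0$, so that $\VOA_{\mathcal{L}}(E_k|\gamma)=\pm\tfrac12$ with the correct sign. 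I expect this to be the main obstacle: near~$\rho$ the crude bound on $R_{k,0}$ is no longer negligible against the near-origin values of $E_k|\gamma$, so one needs a genuinely sharper control of $\Im E_k|\gamma$ — equivalently of the phases of $(\tau-1)^{-k}$ and of the dominant $m^2+n^2=5$ terms of $R_{k,0}$ — on the short sub-arc of $\mathcal{L}$ where $\Re E_k|\gamma$ is small, in order to keep its sign constant there and exclude the extra turn.

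Finally, adding $\tfrac{k}{12}+(\VOA_{\mathcal{L}}+\VOA_{\mathcal{R}})(E_k|\gamma)+\VOA_{\mathcal{C}}(\widehat{E_k|\gamma})$ residue class by residue class mod~$12$ and observing that the accumulated errors stay well below $\tfrac12$ once $k$ is large, the nearest integer is $\lfloor k/12\rfloor$ for $k\equiv1,7,9\pmod{12}$ and $\lceil k/12\rceil$ for $k\equiv3,5,11\pmod{12}$. For the smallest residues (and wherever the bound $6\sqrt5\,(5/2)^{-k/2}$ from~\eqref{eq:rkest2} is too weak) one sharpens the estimates by treating the $m^2+n^2=5$ terms of $R_{k,0}$ explicitly, exactly as is done for $E_k$ in \cref{sec:zerosEk}; this yields the proposition for all odd $k\ge11$.
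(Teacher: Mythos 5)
Your route is the same as the paper's: write $N_0(E_k)=\frac{k}{12}+(\VOA_{\mathcal{L}}+\VOA_{\mathcal{R}}+\VOA_{\mathcal{C}})(E_k|\gamma)$ via~\eqref{eq:VOA1}, evaluate $E_k|\gamma$ at $\ii\infty$, $\rho$ and $\rho+1$ up to the remainder bound~\eqref{eq:rkest2}, use the non-vanishing of $\Re\widehat{E_k|\gamma}$ on $\mathcal{C}$ and of $\Re E_k|\gamma$ on $\mathcal{R}$ (\cref{lem:relambda0}) to read off those two variations from the endpoint arguments, and recognise that everything hinges on $\mathcal{L}$ when $3\mid k$. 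Your endpoint values ($E_k|\gamma(\rho)\approx -1+3\delta_{3\nmid k}$, etc.) agree with the paper's.

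The step you explicitly defer---deciding whether $\VOA_{\mathcal{L}}(E_k|\gamma)$ equals $+\tfrac12$ or $-\tfrac12$ when $k\equiv 3,9\pmod{12}$---is not a removable technicality: the two choices change $N_0(E_k)$ by $1$ and are exactly what separates $\lceil k/12\rceil$ (for $k\equiv 3$) from $\lfloor k/12\rfloor$ (for $k\equiv 9$), so as written your argument only settles the classes with $3\nmid k$. The paper resolves this by computing $\sgn\Im E_k|\gamma(\rho)=(-1)^{(k-3)/6}$ for $3\mid k$. Your suspicion that the blanket bound $|R_{k,0}|\le 6\sqrt5\,(5/2)^{-k/2}$ is useless here is correct, and the reason is concrete: on $\mathcal{L}$ the surviving explicit term $-\Im(\tau-1)^{-k}$ has size only $3^{-k/2}\ll(5/2)^{-k/2}$, and at $\tau=\rho$ the remainder $R_{k,0}$ still contains the terms for $(m,n)=(2,1)$ and $(1,2)$, for which $|2\rho+1|=|\rho+2|=\sqrt3=|\rho-1|$, i.e.\ they have the \emph{same} modulus $3^{-k/2}$ as the term you kept. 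The sign is therefore governed by the sum of all three norm-$3$ contributions, $3^{-k/2}\bigl(\sin\tfrac{5\pi k}{6}-\sin\tfrac{\pi k}{6}-\sin\tfrac{\pi k}{2}\bigr)=(-1)^{(k-3)/6}\,3^{-k/2}$, with the norm-$\ge 7$ tail strictly smaller; the same refinement controls $\Im E_k|\gamma$ on the whole sub-arc of $\mathcal{L}$ where $\Re E_k|\gamma\le 0$, which simultaneously rules out a zero of $E_k|\gamma$ on $\mathcal{L}$ and the extra winding you worry about. Until this computation is carried out, the cases $k\equiv3,9\pmod{12}$ remain open.
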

\begin{proof}
Let $\gamma$ be such that $\lambda(\gamma)=0$. We compute
\[(E_{k}|\gamma)(\rho+1) \= 
\begin{cases} 
 1 & k\equiv 0 \bmod 3 \\
1+\ii\sqrt{3} & k\equiv 1 \bmod 3 \\ 
1-\ii\sqrt{3} & k\equiv 2 \bmod 3, \end{cases}  
 +\frac{1}{(\frac{3}{2}+\frac{1}{2}\ii\sqrt{3})^k}+R_{k,0}(\rho+1).\]
Recall $E_k|\gamma(i\infty)=1$ and $\Re E_k|\gamma(\tau)$ is non-zero for $\tau\in \mathcal{R}$. Hence, 
\[ \VOA_{\mathcal{R}}(E_k|\gamma) \approx \begin{cases} 
0 & k\equiv 0 \bmod 3\\
-\frac{1}{6} & k \equiv 1 \mod 3\\
\frac{1}{6} & k\equiv 2 \mod 3,
\end{cases}
\]
where the error is bounded in absolute value by $\frac{6\sqrt{5}}{(5/2)^{k/2}}.$
Moreover, we compute
\[(E_{k}|\gamma)(\rho) \= -1 + 3\delta_{3\nmid k} \meno \frac{1}{(-\frac{3}{2}+\frac{1}{2}\ii\sqrt{3})^k}+R_{k,0}(\rho) ,\]
where it can be computed that 
\[ \sgn \Im E_k|\gamma(\rho) = (-1)^{(k-3)/6} \qquad \text{if } 3\mid k.\]
Hence,
\[ \VOA_{\mathcal{L}}(E_k|\gamma) \approx \begin{cases} 
\frac{1}{2} & k\equiv 3 \bmod 12\\
-\frac{1}{2} & k\equiv 9 \bmod 12\\
0& 3\nmid k,
\end{cases}
\]
where the error is bounded in absolute value by $\frac{12\sqrt{5}}{(5/2)^{k/2}}.$
Finally, we find
\[
\VOA_{\mathcal{C}}(\widehat{E}_k|\gamma) \,\approx\, \begin{cases}
    \frac{1}{12} & k\equiv 1 \bmod 12 \\
    \frac{1}{4} & k\equiv 3 \bmod 12 \\
    \frac{5}{12} & k\equiv 5 \bmod 12 \\
    -\frac{5}{12} & k\equiv 7 \bmod 12 \\
     -\frac{1}{4} & k\equiv 9 \bmod 12 \\
      -\frac{1}{12} & k\equiv 11 \bmod 12,
\end{cases}
\]
where the error is bounded in absolute value by $\frac{12\sqrt{5}}{(5/2)^{k/2}}.$ Hence, by~\eqref{eq:VOA1} for $f=E_k|\gamma$ we obtain the desired result (note $N_\infty(f|\gamma)=N_\lambda(f)$). For $k\leq 11$ the same result can be obtained by more careful error estimates. 
\end{proof}

\subsection{The case \texorpdfstring{$0<\lambda<1$}{lambda is strictly between 0 and 1}}
In case $0<\lambda<1$, the variation of the argument on~$\mathcal{R}$ may be non-trivial, i.e., cannot directly be deduced from the non-vanishing of the real/imaginary part of~$E_k|\gamma$ on~$\mathcal{R}$. Instead, we split $\mathcal{R}$ in two segments, and show such a result for the real part on the one segment, and for the imaginary part on the other. 
\begin{lem}\label{lem:3.4}
For $0<\lambda \leq 1$ and $k\geq 19$, the value
$\Im E_k|\gamma(\tau)$ is non-zero for $\tau=\rho+1+\ii t \in \mathcal{R}$ with $0<t<\frac{\pi}{2k}$. Moreover, its sign is given by $\sgn(\frac{1}{2}-\lambda) (-1)^{\frac{k-1}{2}}$.
\end{lem}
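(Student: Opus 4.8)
The plan is to isolate the dominant term of $E_k|\gamma(\tau)$ at the point $\tau = \rho+1+\ii t$ and show that its imaginary part controls the sign. First I would recall from~\eqref{eq:Ekhatgamma} that for $0<\lambda<1$ we have $\sgn(\lambda)=+1$, $\sgn(\lambda+1)=+1$, $\sgn(\lambda-1)=-1$, so
\[ E_k|\gamma(\tau) \= 1\+\frac{1}{\tau^k}\+\frac{1}{(\tau+1)^k}\meno\frac{1}{(\tau-1)^k} \+ R_{k,\lambda}(\tau), \]
and for $\lambda=1$ the term $(\tau-1)^{-k}$ is instead absorbed into $R_{k,1}$ via~\eqref{eq:Ekhatgamma1}; I would handle both uniformly by noting that the contribution of $(\tau-1)^{-k}$ (resp.\ the $(c\tau+d)^{-k}$ summand) is harmless in the estimates since $\tau-1$ is close to $\rho$, which has modulus $1$, and this term only enters $\Re$, not the leading $\Im$. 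The key point is that near $\tau_0 = \rho+1$ we have $\tau_0+1 = \rho+2$ with $|\rho+2| = \sqrt{3}$, while $|\tau_0| = 1$, $|\tau_0-1| = |\rho| = 1$; so the three terms $1$, $\tau^{-k}$, $(\tau-1)^{-k}$ all have modulus close to $1$, whereas $(\tau+1)^{-k}$ is exponentially small ($3^{-k/2}$). So the imaginary part of $E_k|\gamma$ near $\tau_0$ is governed by $\Im\!\bigl(\tau^{-k} - (\tau-1)^{-k}\bigr)$ plus an exponentially small remainder.

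Next I would expand these two unit-modulus terms to first order in $t$. Write $\tau = \rho+1+\ii t$. At $t=0$: $\tau = \rho+1 = \tfrac12+\tfrac{\ii}{2}\sqrt3 = e^{\ii\pi/3}$, so $\tau^{-k}|_{t=0} = e^{-\ii k\pi/3}$; and $\tau-1 = \rho = e^{2\ii\pi/3}$, so $(\tau-1)^{-k}|_{t=0} = e^{-2\ii k\pi/3}$. For odd $k$, $e^{-\ii k\pi/3}$ and $e^{-2\ii k\pi/3}$ are complex conjugates up to sign (indeed $e^{-2\ii k\pi/3} = e^{\ii k\pi/3}\cdot e^{-\ii k\pi}= -\overline{e^{-\ii k\pi/3}}$ using $k$ odd), so at $t=0$ the combination $\tau^{-k}-(\tau-1)^{-k}$ has imaginary part $2\Im(e^{-\ii k\pi/3})$ — which is nonzero unless $3\mid k$, and when $3\mid k$ one checks $\tau^{-k}|_{t=0}=(\tau-1)^{-k}|_{t=0} = \pm1$ so the $t=0$ imaginary part vanishes and the \emph{linear} term in $t$ takes over. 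This is exactly the regime $0<t<\frac{\pi}{2k}$ in the statement: I would compute $\frac{d}{dt}\big|_{t=0}$ of $\Im(\tau^{-k} - (\tau-1)^{-k})$. Using $\frac{d}{dt}(\tau)^{-k} = -k\ii\,\tau^{-k-1}$, evaluate at the two base points; with $\tau_0^{-k-1} = e^{-\ii(k+1)\pi/3}$ and $(\tau_0-1)^{-k-1}=e^{-2\ii(k+1)\pi/3}$, and extract the real part (since we differentiate $\Im$ and multiply by $\ii$). A short computation should yield a derivative proportional to $k$ with sign $(-1)^{(k-1)/2}$ (the alternating factor coming from $e^{\pm\ii k\pi/2}$-type terms after reducing mod the relevant period), matching the claimed $\sgn(\tfrac12-\lambda)(-1)^{(k-1)/2}$ up to the $\lambda$-dependence.

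The $\lambda$-dependence — the factor $\sgn(\tfrac12-\lambda)$ — must come from somewhere, and this is the subtle part: the four ``main'' terms in~\eqref{eq:Ekhatgamma} do \emph{not} depend on $\lambda$ beyond the signs already used, so the $\lambda$-sensitivity near $t\approx 0$ must come from a term in $R_{k,\lambda}$ that becomes comparable in size precisely on the tiny window $0<t<\frac{\pi}{2k}$ when $3\mid k$. I expect the relevant term is $(m\tau+n)^{-k}$ with $(m,n)=(1,-2)$ or $(2,-1)$ (the $m^2+n^2=5$ terms), whose modulus at $\tau_0=\rho+1$ is $|{-\tfrac32+\tfrac{\ii}{2}\sqrt3}|^{-k} = 3^{-k/2}$, comparable to $(\tau+1)^{-k}$; whether $dm>cn$ holds — i.e.\ whether this term is present in the sum at all — depends on $\lambda=-d/c$, and the threshold is exactly $\lambda = \tfrac12$ (since $dm>cn \iff d/c < m/n$ when $c<0$, giving $-\lambda < -\tfrac12$ for $(m,n)=(1,-2)$). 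So the mechanism is: for $\lambda<\tfrac12$ a certain $3^{-k/2}$-size term is included with one sign, for $\lambda>\tfrac12$ it is excluded (or replaced by its conjugate partner), flipping the sign of the dominant imaginary contribution on this window. \textbf{The main obstacle} will be carefully bookkeeping which $m^2+n^2=5$ (and possibly $m^2+n^2=2$, i.e.\ $(1,-1)$, but that is already a main term) lattice points lie in the summation region $\{dm>cn\}$ as a function of $\lambda\in(0,1)$, and verifying that their combined contribution to $\Im E_k|\gamma(\rho+1+\ii t)$ dominates everything else (all terms with $m^2+n^2\geq 8$, bounded as in~\eqref{eq:rkest2} by roughly $(5/2)^{-k/2}$, which is smaller than $3^{-k/2}$) uniformly for $0<t<\frac{\pi}{2k}$ and $k\geq 19$ — the constraint $k\geq 19$ presumably being what makes the numerics close. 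I would organize this as: (1) write $\Im E_k|\gamma = \Im\bigl(\tau^{-k}-(\tau-1)^{-k}\bigr) + \Im(\text{the }\sqrt5\text{-terms, }\lambda\text{-dependent}) + \Im(\text{tail})$; (2) Taylor-expand the first group to order $t$ and bound the $t^2$ error by $k^2 t^2 \lesssim \pi^2/4$; (3) bound the tail by~\eqref{eq:rkest2}; (4) show the $\sqrt5$-term, with sign determined by $\lambda\lessgtr\tfrac12$, plus the linear-in-$t$ piece, has a definite sign, then read off $\sgn(\tfrac12-\lambda)(-1)^{(k-1)/2}$.
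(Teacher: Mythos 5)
There is a genuine gap, and it sits at the very center of your plan. You propose to control $\Im E_k|\gamma$ on $\mathcal{R}$ by Taylor-expanding $\Im\bigl(\tau^{-k}-(\tau-1)^{-k}\bigr)$ in $t$, claiming this is $2\Im(e^{-\ii k\pi/3})$ at $t=0$ (nonzero unless $3\mid k$) with a nonzero linear term otherwise. But your own identity $e^{-2\ii k\pi/3}=-\overline{e^{-\ii k\pi/3}}$ gives $\tau_0^{-k}-(\tau_0-1)^{-k}=\tau_0^{-k}+\overline{\tau_0^{-k}}=2\Re(\tau_0^{-k})$, which is \emph{real}: the imaginary part is $0$ for every odd $k$, not just $3\mid k$. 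More is true: on all of $\mathcal{R}$ one has $\overline{\tau}=1-\tau$, so for odd $k$, $\overline{(m\tau+n)^{-k}}=-(m\tau-(m+n))^{-k}$ and hence $\Im(m\tau+n)^{-k}=\Im(m\tau-(m+n))^{-k}$ identically in $t$. Thus $\Im\bigl(\tau^{-k}-(\tau-1)^{-k}\bigr)\equiv 0$ on $\mathcal{R}$, its $t$-derivative is also identically zero, and steps (1)--(2) of your outline produce nothing; the "linear term that takes over" does not exist.

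The paper's proof uses exactly this reflection to cancel, pairwise, \emph{all} the large imaginary contributions: the pair $(1,0)\leftrightarrow(1,-1)$ (your two unit-modulus terms) and, crucially, the pair $(1,1)\leftrightarrow(1,-2)$, which kills $\Im(\tau+1)^{-k}$ against the $(1,-2)$-term of $R_{k,\lambda}$ (present with sign $\sgn(\lambda-2)=-1$ for $\lambda\le 1$). This second cancellation is absent from your proposal and is indispensable: $|\Im(\tau+1)^{-k}|$ is of size up to $3^{-k/2}$ on $\mathcal{R}$ (at $\rho+1$ it equals $3^{-k/2}|\sin(k\pi/6)|\ge\tfrac12 3^{-k/2}$), i.e.\ comparable to --- and for $t>0$ potentially larger than --- the term that is supposed to fix the sign, so your step (4) cannot close without it. What survives after all cancellations is the \emph{self-paired} lattice point $(m,n)=(2,-1)$: on $\mathcal{R}$ one has $1-2\tau=-\ii(\sqrt{3}+2t)$, purely imaginary, contributing $\sgn(\tfrac12-\lambda)(-1)^{(k-1)/2}(\sqrt{3}+2t)^{-k}$, which for $0<t<\frac{\pi}{2k}$ dominates the remaining explicit terms ($|\tau+2|^2,|2\tau+1|^2\ge 7$) and the tail over $m^2+n^2\ge 10$. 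You correctly guess that the $\lambda$-dependence enters through an $m^2+n^2=5$ term with threshold $\lambda=\tfrac12$, but you attribute that threshold to $(1,-2)$; the condition $dm>cn$ for $(1,-2)$ switches at $\lambda=2$, while it is $(2,-1)$ that switches at $\lambda=\tfrac12$ (the inequality reads $2\lambda>1$). So the mechanism is not "a $3^{-k/2}$-term competing with a linear-in-$t$ main term", but "everything at scale $\ge 3^{-k/2}$ cancels identically except the purely imaginary $(2,-1)$-term, whose sign is $\sgn(\tfrac12-\lambda)(-1)^{(k-1)/2}$".
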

\begin{proof}
For $\tau \in \mathcal{R}$, we have 
\[
\Im {E}_{k}|\gamma(\tau)
\= \Im\frac{1}{(\tau+2)^k} \+ \Im\frac{1}{(2\tau+1)^k} \+ \Im \frac{\sgn (\frac{1}{2}-\lambda)}{(-2\tau+1)^k} \+ \Im{\widetilde{R}}_{k,\lambda}(\tau)
\]
where ${\widetilde{R}}_{k,\lambda}$ consists of all terms with $m^2+n^2 \geq 10$.
Using a similar argument as in the proof of \cref{lem:R}, we find for $k \geq 15$
\begin{equation}
    |{\widetilde{R}}_{k,\lambda}| \,\leq\, 2 \!\!\!\sum_{\substack{m,n>0 \\ m^2 + n^2 \geq 10 \\ (m,n) = 1 }}\frac{1}{|m\tau+n|^k} \,\leq\, 2 \sum_{N\geq 10} \frac{N^{1/2}}{(\frac{1}{2}N)^{k/2}}
\,\leq\, \frac{16\sqrt{10}}{3\cdot 5^{k/2}} \qquad (\tau\in \mathcal{R}).
\end{equation}
Since both $|2 \tau + 1|^2$ and $|\tau + 2|^2$ are bounded from below by $7$ for $\tau \in \mathcal{R}$, we find that
\[ \left|\Im {E}_{k}|\gamma(\tau) -  \Im \frac{\sgn (\frac{1}{2}-\lambda)}{(1-2\tau)^k} \right| \,\leq\, \frac{19}{5^{k/2}}.\]
Clearly, for $\tau=\rho+1+\ii t$,
\[\Im \frac{\sgn(\frac{1}{2}-\lambda)}{(1-2 \tau)^k} \= \frac{\sgn(\frac{1}{2}-\lambda) (-1)^{\frac{k-1}{2}}}{(\sqrt{3}+2t)^k} \]
and this quantity is bounded in absolute value from below by $1/(\sqrt{3}+\frac{\pi}{k})^k$ for $0 < t < \frac{\pi}{2 k}$.
By the reverse triangle inequality, we conclude that
\[ 
\bigl|\Im {E}_{k}|\gamma(\tau) \bigr|\,\geq \,\frac{1}{(\sqrt{3}+\frac{\pi}{k})^k} - \frac{19}{5^{k/2}} .
\]
If $k \geq 19$, the right-hand side is positive and therefore $\Im {E}_{k}|\gamma(\tau)$ is non-zero for $0 < t < \frac{\pi}{2k}$.
\end{proof}

\begin{lem}\label{lem:relambda<1}
For $0<\lambda \leq 1$ and $k\geq 9$, the value
$\Re E_k|\gamma(\tau)$ is non-zero for $\tau=\rho+1+\ii t \in \mathcal{R}$ with $t>\frac{\pi}{2k}$. 
\end{lem}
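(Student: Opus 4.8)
The plan is to extract the real part of $E_k|\gamma$ along~$\mathcal{R}$ from the explicit expansion~\eqref{eq:Ekhatgamma} (resp.~\eqref{eq:Ekhatgamma1} if $\lambda=1$), which for $0<\lambda\leq 1$ reads
\[
E_k|\gamma(\tau)\= 1\+\frac{1}{\tau^k}\+\frac{1}{(\tau+1)^k}\meno\frac{1}{(\tau-1)^k}\+R_{k,\lambda}(\tau).
\]
The key elementary observation is that on~$\mathcal{R}$ one has $\Re\tau=\tfrac12$, hence $\tau-1=-\overline{\tau}$; since $k$ is odd this gives $(\tau-1)^{-k}=-\overline{\tau^{-k}}$, so $\tau^{-k}-(\tau-1)^{-k}=2\Re\tau^{-k}$ is real. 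Consequently
\[
\Re E_k|\gamma(\tau)\= 1\+2\Re\frac{1}{\tau^k}\+\Re\frac{1}{(\tau+1)^k}\+\Re R_{k,\lambda}(\tau)\,\geq\, 1-\frac{2}{|\tau|^k}-\frac{1}{|\tau+1|^k}-|R_{k,\lambda}(\tau)|,
\]
and it remains to check that this lower bound is positive for $\tau=\rho+1+\ii t$ with $t>\tfrac{\pi}{2k}$.

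First I would bound the three negative terms uniformly on this half-line. Writing $\tau=\tfrac12+\ii s$ with $s=\tfrac12\sqrt3+t$, for $t>\tfrac{\pi}{2k}$ one has $|\tau|^2=\tfrac14+s^2>1+\tfrac{\sqrt3\,\pi}{2k}$, so $|\tau|^k>(1+\tfrac{\sqrt3\,\pi}{2k})^{k/2}$; since $(1+c/k)^{k/2}$ is increasing in~$k$, this is at least its value at $k=9$, which exceeds~$3$, so $2/|\tau|^k<\tfrac23$ for all $k\geq 9$. Next $|\tau+1|^2=\tfrac94+s^2\geq 3$, whence $|\tau+1|^{-k}\leq 3^{-k/2}\leq 3^{-9/2}$. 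Finally $|R_{k,\lambda}(\tau)|\leq\tfrac{6\sqrt5}{(5/2)^{k/2}}$ for $k\geq 11$ by~\eqref{eq:rkest2}, while for $k=9$ the same estimate as in the proof of~\cref{lem:Rktau}, bounding the remaining sum $\sum_{N\geq5}2N^{1/2}(N/2)^{-k/2}$ directly instead of by an integral, gives a bound below~$\tfrac15$. Plugging these in yields $\Re E_k|\gamma(\tau)>1-\tfrac23-3^{-9/2}-\tfrac15>0$, so $\Re E_k|\gamma(\tau)$ is nonzero (indeed positive). The same argument works verbatim with $R_{k,\lambda}$ replaced by $\mathbb{R}_{k,\lambda}=R_{k,\lambda}-(c\tau+d)^{-k}$, as $c^2+d^2\geq 5$ makes the extra summand harmless in the error bound.

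The main obstacle is the tightness of the margin rather than any conceptual difficulty: as $t\downarrow\tfrac{\pi}{2k}$ and $k\to\infty$ one has $|\tau|\to 1^{+}$ and $|\tau|^k\to e^{\sqrt3\,\pi/4}\approx 3.9$, so the correction $2/|\tau|^k\approx 0.5$ is genuinely not negligible and must be kept with its constant; one then has to verify that $1-2e^{-\sqrt3\,\pi/4}$ (about~$0.49$), diminished by the exponentially small remainder terms, stays positive for every $k\geq 9$. This is exactly why the strip $0<t\leq\tfrac{\pi}{2k}$ around the corner $\rho+1$ is excluded here, $|\tau|^k$ being too close to~$1$ there for this crude bound to work; that strip is instead dealt with via the imaginary part in~\cref{lem:3.4}.
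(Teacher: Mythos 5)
Your proposal is correct and follows essentially the same route as the paper: both reduce to the identity $\Re E_k|\gamma(\tau)=1+2\Re\tau^{-k}+\Re(\tau+1)^{-k}+\Re R_{k,\lambda}(\tau)$ on $\mathcal{R}$ (which you helpfully derive from $\tau-1=-\overline{\tau}$ and $k$ odd, where the paper merely asserts it) and then bound the three negative terms via $|\tau|^2\geq\tfrac14+(\tfrac12\sqrt3+\tfrac{\pi}{2k})^2$, $|\tau+1|^2\geq 3$, and the remainder estimate. Your explicit handling of $k=9$, where the bound \eqref{eq:rkest2} as stated requires $k\geq 11$, is a small but genuine improvement in care over the paper's proof.
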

\begin{proof}
Using~\eqref{eq:Ekhatgamma} we find the approximation
\[
\Re E_k|\gamma(\tau) \= 1 \+ 2 \,\Re\frac{1}{\tau^k} \+ \Re\frac{1}{(\tau+1)^k} \+ \Re R_{k,\lambda}(\tau) \]
for $\tau \in \mathcal{R}$.
As in the proof of \cref{lem:R}, we have the estimate
\[|R_{k,\lambda}(\tau)| \,\leq\, \frac{6 \sqrt{5}}{(5/2)^{k/2}}.\]
From the bounds $|\tau+1|^2 \geq 3$ and  $|\tau|^2 \geq \frac{1}{4} + (\frac{1}{2}\sqrt{3} + \frac{\pi}{2k})^2$ we get the following lower bound for the real part of~$E_k |\gamma$:
\begin{equation}
\bigl|\Re E_k | \gamma(\tau) \bigr| \,\geq\, 1 \meno \frac{6 \sqrt{5}}{(5/2)^{k/2}} \meno \frac{1}{3^{k/2}} \meno \frac{2}{(\frac{1}{4}+(\frac{1}{2}\sqrt{3} \+ \frac{\pi}{2 k})^2)^{k/2}}
\end{equation}
for $t>\frac{\pi}{2 k}$. It can be checked that the left-hand side is positive if $k \geq 9$. We conclude that $\Re E_k | \gamma(\tau) $ is non-zero for $t > \frac{\pi}{2k}$.
\end{proof}

\begin{prop}\label{prop:thm1b}
For odd $k\geq 19$ and $0<\lambda\leq 1$ the value $N_\lambda(E_k)$ is as in \cref{tab:1}. Moreover, further assuming $\lambda\neq 1$ we obtain $N_\lambda(\GEis_k)=N_\lambda(E_k)$.
\end{prop}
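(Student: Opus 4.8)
The plan is to mimic the structure of the proofs of \cref{prop:lambda>1} and \cref{prop:thm10}, using the valence-type formula~\eqref{eq:VOA1} applied to $f=E_k|\gamma$, and computing each of the three contributions $\VOA_{\mathcal{L}}$, $\VOA_{\mathcal{C}}$ and $\VOA_{\mathcal{R}}$ separately as functions of $k\bmod 12$ and of which of the three sub-cases for $|\lambda|$ we are in (here $0<\lambda\le 1$). Since $N_\lambda(E_k)=(\VOA_{\mathcal{L}}+\VOA_{\mathcal{C}}+\VOA_{\mathcal{R}})(E_k|\gamma)$ is an integer, it suffices to pin each VOA down to within an error that is $o(1)$ in $k$, and then check that the integer nearest to the sum agrees with \cref{tab:1}.

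First I would handle $\VOA_{\mathcal{C}}(\widehat{E_k|\gamma})$: by \cref{lem:C}, $\Im\widehat{E_k|\gamma}$ does not vanish on $[\pi/3,2\pi/3]$ for $\lambda>0$, so $\VOA_{\mathcal{C}}$ is determined (up to exponentially small error, via \cref{lem:angle}) by the arguments of $\widehat{E_k|\gamma}$ at the two endpoints $\rho$ and $\rho+1$; these I compute from~\eqref{eq:Ekhatgamma} exactly as in \cref{prop:lambda>1}, and since the endpoint values of $E_k|\gamma$ at $\rho,\rho+1$ (namely $1$, $1\pm\ii\sqrt3$ plus the small term $(c\tau+d)^{-k}$ plus $R_{k,\lambda}$) do not depend on the finer position of $\lambda$ within $(0,1)$ versus $(1,\infty)$, this contribution is the same as for $\lambda>1$, i.e. $\approx\round(k/12)-k/12$ on $\mathcal{C}$. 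Next, $\VOA_{\mathcal{L}}(E_k|\gamma)$: since $E_k|\gamma$ is $1$-periodic, $\VOA_{\mathcal{L}}+\VOA_{\mathcal{R}}$ equals a boundary difference of arguments, but the more robust route is the one used in \cref{prop:thm10} — compute $\arg E_k|\gamma(\rho)$ (non-vanishing of $\Re$ or $\Im$ on the relevant boundary piece, plus the endpoint value at $\ii\infty$ which is $1$), giving $\VOA_{\mathcal{L}}$ up to exponentially small error; here one must split into $3\mid k$ and $3\nmid k$, and when $3\mid k$ determine $\sgn\Im E_k|\gamma(\rho)$ from the surviving term, exactly as in the $\lambda=0$ case.

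The genuinely new work is $\VOA_{\mathcal{R}}(E_k|\gamma)$, and this is where \cref{lem:3.4} and \cref{lem:relambda<1} come in: for $0<\lambda\le1$ the real part of $E_k|\gamma$ can vanish on $\mathcal{R}$, so I split $\mathcal{R}$ at the point $\tau=\rho+1+\ii\,\tfrac{\pi}{2k}$. On the upper segment $t>\tfrac{\pi}{2k}$, \cref{lem:relambda<1} gives $\Re E_k|\gamma\ne0$, so the variation of the argument there is $o(1)$ (the value at $\ii\infty$ being $1$ and $E_k|\gamma$ being $1$-periodic and real-Fourier-coefficiented — or directly, the real part stays positive). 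On the short bottom segment $0<t<\tfrac{\pi}{2k}$, \cref{lem:3.4} gives $\Im E_k|\gamma\ne0$ with sign $\sgn(\tfrac12-\lambda)(-1)^{(k-1)/2}$; combined with the value/argument of $E_k|\gamma$ at the endpoint $\rho+1$ (which I already computed, and whose sign of real part I can read off), this determines whether the argument makes a quarter-turn and in which direction as $\tau$ moves from $\rho+1$ up to $\rho+1+\ii\,\tfrac{\pi}{2k}$. The upshot is an extra contribution of $0$ or $\pm\tfrac14$ to $\VOA_{\mathcal{R}}$ depending on $\sgn(\tfrac12-\lambda)$, i.e. on whether $|\lambda|\le\tfrac12$ or $\tfrac12<|\lambda|\le1$ — this is precisely the mechanism that makes \cref{tab:1} depend on $|\lambda|$ through the threshold $\tfrac12$. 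Assembling $\VOA_{\mathcal{L}}+\VOA_{\mathcal{C}}+\VOA_{\mathcal{R}}$ case by case modulo $12$ and rounding to the nearest integer, one reads off \cref{tab:1}; all error terms are bounded by a fixed multiple of $(5/2)^{-k/2}$ (together with the $(\sqrt3+\pi/k)^{-k}$ term from \cref{lem:3.4}), which is why the hypothesis $k\ge19$ is needed, matching the worst constant among the lemmas invoked.

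For the statement $N_\lambda(\GEis_k)=N_\lambda(E_k)$ when additionally $\lambda\ne1$: recall from~\eqref{eq:GkEkcomp} that $\GEis_k|\gamma$ is proportional to $E_k|\gamma-(c\tau+d)^{-k}$, and when $\lambda\notin\{0,\pm1,\infty\}$ we have $c^2+d^2\ge5$, so the omitted term $(c\tau+d)^{-k}$ is of the same exponentially small size as the remainders $R_{k,\lambda}$ already being controlled; hence $\mathbb{R}_{k,\lambda}$ satisfies the same bound~\eqref{eq:rkest2} as $R_{k,\lambda}$, and every one of \cref{lem:C}, \cref{lem:3.4}, \cref{lem:relambda<1} and the endpoint computations at $\rho$, $\rho+1$, $\ii\infty$ goes through verbatim with $R_{k,\lambda}$ replaced by $\mathbb{R}_{k,\lambda}$. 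Therefore the same VOA computation yields the same integer, and $N_\lambda(\GEis_k)=N_\lambda(E_k)$ for $0<\lambda<1$. I expect the main obstacle to be the bookkeeping on the bottom segment of $\mathcal{R}$: correctly matching the sign from \cref{lem:3.4} with the quadrant of the endpoint value $E_k|\gamma(\rho+1)$ to decide the direction of the quarter-turn, and verifying that the two sub-lemmas' non-vanishing regions on $\mathcal{R}$ overlap (which needs $\tfrac{\pi}{2k}$ to lie in both ranges, true for the stated $k$), so that no zero of $E_k|\gamma$ on $\mathcal{R}$ is missed between them.
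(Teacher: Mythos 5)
Your overall architecture is the paper's: the same decomposition $N_\lambda=(\VOA_{\mathcal{L}}+\VOA_{\mathcal{C}}+\VOA_{\mathcal{R}})(E_k|\gamma)$, the same splitting of $\mathcal{R}$ at $\rho+1+\ii\frac{\pi}{2k}$ with \cref{lem:relambda<1} above and \cref{lem:3.4} below, the same identification of $\sgn(\frac12-\lambda)$ as the source of the threshold at $|\lambda|=\frac12$, and the same observation that for $\lambda\neq 1$ the extra term $(c\tau+d)^{-k}$ sits inside the remainder so the $\GEis_k$ statement comes for free. However, there is one concrete false step: you justify importing $\VOA_{\mathcal{C}}\approx\round(k/12)$ from the $\lambda>1$ case by asserting that the endpoint values at $\rho$ and $\rho+1$ ``do not depend on the finer position of $\lambda$ within $(0,1)$ versus $(1,\infty)$.'' They do. The coefficient of $(\tau-1)^{-k}$ in~\eqref{eq:Ekhatgamma} is $\sgn(\lambda-1)$, and at $\tau=\rho+1$ this term equals $\sgn(\lambda-1)\,\rho^{-k}$, which has modulus $1$; it is only at $\tau=\rho$ that $(\tau-1)^{-k}$ is exponentially small. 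So $E_k|\gamma(\rho+1)$ jumps (e.g.\ from $1+e^{-\ii k\pi/3}+e^{-2\ii k\pi/3}$ to $1+e^{-\ii k\pi/3}-e^{-2\ii k\pi/3}$, i.e.\ from $1-\ii\sqrt3$ to $2$ when $k\equiv 1\bmod 6$), and $\VOA_{\mathcal{C}}$ shifts by a nonzero multiple of $\frac16$ relative to the $\lambda>1$ value. This shift is exactly what compensates the new nonzero $\VOA_{\mathcal{L}}\approx\mp\frac16$ that you (correctly) compute for $3\nmid k$; if you keep $\VOA_{\mathcal{C}}\approx\round(k/12)$ \emph{and} add $\mp\frac16$ on $\mathcal{L}$, your total is not an integer, and for $k\equiv 5,7\bmod{12}$ (precisely the residues where \cref{tab:1} differs from $\round(k/12)$ in this column) naive rounding lands on the wrong entry. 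You must recompute the argument of $\widehat{E_k|\gamma}$ at $\rho+1$ with the flipped sign.

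A smaller bookkeeping error in the same spirit: on the bottom segment of $\mathcal{R}$ the contribution is not ``$0$ or $\pm\frac14$'' but $0$ or $\pm\frac12$. When $3\mid k$ the value $E_k|\gamma(\rho+1)$ is real with sign opposite to $\Re E_k|\gamma(\alpha)\geq\frac49$, so the argument sweeps a full half-turn from $\pi$ to $0$ on that short segment, with the direction dictated by the constant sign of $\Im E_k|\gamma$ from \cref{lem:3.4}; when $3\nmid k$ both endpoint values are real of the same sign and the contribution is $\approx 0$. A quarter-turn would change the total by $\frac12$, which is impossible for an integer count. Neither issue requires a new idea---both are fixed by honestly recomputing the endpoint data at $\rho+1$ for $0<\lambda\leq 1$ rather than borrowing it from the $\lambda>1$ case---but as written the plan would not reproduce \cref{tab:1} for $k\equiv 5,7\bmod{12}$.
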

\begin{proof}
First, assume $\tfrac{1}{2} < \lambda \leq  1$. 
As before, we find
\[(E_{k}|\gamma)(\rho) \= \begin{cases} -1+\ii\sqrt{3} & k\equiv 1 \bmod 3 \\ -1-\ii\sqrt{3} & k\equiv 2 \bmod 3 \\ -1 & k\equiv 0 \bmod 3\end{cases} +\frac{1}{(-\frac{3}{2}+\frac{1}{2}\ii\sqrt{3})^k}+R_{k,\lambda}(\rho).\]
Recall $E_k|\gamma(\ii\infty) = 1.$ By a careful analysis as before, using \cref{lem:angle} and the fact that for $\tau \in \mathcal{L}$
\[ E_k|\gamma(\tau) \= E_k|\tilde{\gamma}(\tau+1)\]
for some $\tilde\gamma \in \sltwoz$ with $\lambda(\tilde\gamma) = \lambda(\gamma)+1$ and $\tau+1\in\mathcal{R}$, we find
\begin{align}
\VOA_{\mathcal{L}}(E_k|\gamma) \,\approx\, \begin{cases}0 & k\equiv 0 \bmod 3 \\ -\frac{1}{6} & k\equiv 1 \bmod 3 \\ \frac{1}{6} & k\equiv 2 \bmod 3 ,\end{cases}
\end{align}
where the error is bounded in absolute value by $\frac{6\sqrt{5}}{(5/2)^{k/2}}.$

Next, we have
\[(E_{k}|\gamma)(\rho+1) \= 1-3\delta_{3\nmid k} + \frac{1}{(\frac{3}{2}+\frac{1}{2}\ii\sqrt{3})^k}+R_{k,\lambda}(\rho+1).\]
Write $\alpha_k=\alpha=\rho+1+\frac{\pi}{2k}\ii$. Then, by the estimates in \cref{lem:3.4} and~\ref{lem:relambda<1} for $k\geq 19$ we find
\[ 
|\Im(E_k|\gamma)(\alpha)| \leq \frac{1}{3^{k/2}} , \qquad\Re(E_k|\gamma)(\alpha) \geq \frac{4}{9}. 
\]
Hence, by \cref{lem:angle},~\ref{lem:3.4} and~\ref{lem:relambda<1}, we obtain
\begin{align}
\VOA_R(E_k|\gamma) &\,\approx \, \begin{cases} 0 & 3\nmid k \\ -\frac{1}{2} & k\equiv 3 \bmod 12 \\ \frac{1}{2} & k\equiv 9 \bmod 12 \end{cases} 
\end{align}
with the error bounded in absolute value by $4\cdot {3}^{-k/2-2}$. Together with the variation of the argument on~$\mathcal{C}$, given by~\eqref{eq:VOAonC}, this gives the desired result for $N_\lambda(E_k|\gamma)$ for $\frac{1}{2}<\lambda\leq 1$ and $k\geq 19$. By improving the error estimates, we obtain the same result for $k\geq 3$.

The case $0 < \lambda \leq \tfrac{1}{2}$ goes analogously. The different outcome is caused by a sign change in $\VOA_{\mathcal{R}}(E_k|\gamma)$ due to a sign change in~$\Im E_k|\gamma$ (see \cref{lem:3.4}). 

Finally, not that the difference of~$E_k$ and $\GEis_k$ is contained in the remainder $R_{k,\lambda}$ if $\lambda \neq 1$. Hence, the same results hold for $\GEis_k$. 
\end{proof}

\subsection{Proof of~\texorpdfstring{\cref{thm:1}}{Theorem 1} and~\texorpdfstring{\cref{thm:3}}{Theorem 3}}
\begin{proof}[Proof of~\cref{thm:1} and~\cref{thm:3}] 
Observe that $N_\lambda(E_k)$ is already computed in many cases: $\lambda=\infty$ (\cref{prop:thm1}
), $\lambda>1$ (\cref{prop:lambda>1}), $0<\lambda\leq1$ (\cref{prop:thm1b}) and $\lambda=0$ (\cref{prop:thm10}). Moreover, for $\lambda \in \mathbb{P}^1(\q)\backslash\{0,\pm 1,\infty\}$ these results give the value of~$N_\lambda(\GEis_k)$. Hence, it would suffice to extend these results to negative values of~$\lambda$. 

Now, observe that all Fourier coefficients of~$i\GEis_k$ are purely real. Namely, in contrast to~$E_k\mspace{1mu}$, the constant term of~$\GEis_k$ vanishes. Hence, by \cref{lem:-lambda}, we conclude that
\[N_\lambda(\GEis_k) \= N_{-\lambda}(\GEis_k)\]
for all $\lambda\in \q$. By similar arguments as in the aforementioned propositions, we find that
\[ N_{-\lambda}(E_k)\= N_{-\lambda}(\GEis_k) \]
as long as $\lambda \in \mathbb{P}^1(\q)\backslash\{0,\pm 1,\infty\}$. 
\end{proof}

\begin{remark}
For $\lambda \in \{0,\pm 1,\infty\}$, we expect $N_\lambda(\GEis_k)$ equals the value which can be found in \cref{tab:2}. In this work, of these boundary cases only the case $\lambda=\infty$ is proven. We invite the reader to prove the other cases.
 \begin{table}[h!]
\arraycolsep=5pt\def\arraystretch{1.4} \begin{center}
 $\begin{array}{l|ccc}
k \, (12)
 &  \lambda=0 & \lambda= \pm 1 & \lambda=\infty\\\hline
1 & 0 & \color{blue} \left\lfloor\frac{k}{12}\right\rfloor & \color{blue}\left\lfloor \frac{k}{6} \right\rfloor\\ 
3 & 0 & \color{blue} \left\lfloor\frac{k}{12}\right\rfloor & \color{red}\left\lceil \frac{k}{6} \right\rceil\\ 
5 & 0 & \frac{k+1}{12} & \color{red}\left\lceil \frac{k}{6} \right\rceil\\ 
7 &0 & \frac{k-1}{12} & \color{blue}\left\lfloor \frac{k}{6} \right\rfloor\\ 
9  &0 & \color{red} \left\lceil\frac{k}{12}\right\rceil& \color{blue}\left\lfloor \frac{k}{6} \right\rfloor\\ 
11& 0 &\color{red} \left\lceil\frac{k}{12}\right\rceil & \color{red} \left\lceil \frac{k}{6} \right\rceil
 \end{array}$
 \end{center}\vspace{-8pt}
  \caption{The (special) values of~$N_\lambda(\GEis_k)$.}\label{tab:2}
 \end{table}
\end{remark}

\end{document}